\newcommand{\mbb}{\mathbb}
\theoremstyle{plain}
\newtheorem{thm}{Theorem}[section]
\newtheorem*{thm*}{Theorem}
\newtheorem{thmAlph}{Theorem}
\newtheorem{lem}[thm]{Lemma}
\newtheorem{lem*}{Lemma}
\newtheorem{cor}[thm]{Corollary}
\theoremstyle{remark}
\newtheorem{claim*}{Claim}
\newtheorem{rem}[thm]{Remark}
\newtheorem*{rem*}{Remark}
\newcommand{\N}{{\mathbb{N}}} 
\newcommand{\R}{{\mathbb{R}}} 
\newcommand{\NN}{{\mathbb{N}}} 
\newcommand{\RR}{{\mathbb{R}}} 
\newcommand{\ZZ}{{\mathbb{Z}}} 
\DeclareSymbolFont{bbold}{U}{bbold}{m}{n}
\DeclareSymbolFontAlphabet{\mathbbold}{bbold}
\newcommand{\calO}{{\mathcal{O}}}
\newcommand{\floor}[1]{\left\lfloor #1 \right\rfloor} 
\newcommand{\smallCalO}{{\scriptstyle \calO}}
\numberwithin{equation}{section}
\title{On the asymptotic behavior of Sudler products along subsequences}
\author{
Mario Neum\"uller \thanks{The author is funded 
by the Austrian Science Fund (FWF): Project F5509-N26, Project F5512-N26 (Special Research Program ``Quasi-Monte Carlo Methods: Theory and Applications'') and Project P29910-N35 (``Dynamics, Geometry, and Arithmetic of Numeration''). }}
\date{}
\begin{document}

	\maketitle

	\begin{abstract}
	Let $\alpha \in (0,1)$ and irrational. We investigate the asymptotic behaviour of sequences of certain trigonometric products (Sudler products) $(P_N(\alpha))_{N\in\NN}$ with $$P_N(\alpha) =\prod_{r=1}^N|2\sin(\pi r \alpha)|.$$ 
	
	More precisely, we are interested in the asymptotic behaviour of subsequences of the form $(P_{q_n(\alpha)}(\alpha))_{n\in\NN}$, where $q_n(\alpha)$ is the $n$th best approximation denominator of $\alpha$. Interesting upper and lower bounds for the growth of these subsequences are given, and  convergence results, obtained by Mestel and Verschueren (see \cite{VM16}) and Grepstad and Neum\"uller (see \cite{GN18}), are generalized to the case of irrationals with bounded continued fraction coefficients.
	\end{abstract}

	\centerline{
	\begin{minipage}[hc]{130mm}
		{\em Keywords:} Trigonometric product, continued fraction, Kronecker sequence, Ostrowski representation\\
		{\em MSC 2010:} 26D05, 41A60, 11J70 (primary), 11L15, 11K31 (secondary)
	\end{minipage}
	} 
	

	\section{Introduction}
	
	In this article we are going to study the asymptotic behaviour of a certain trigonometric product given by
	\begin{equation}\label{eq:defPN}
		P_N(\alpha):=\prod_{r=1}^N|2\sin(\pi r \alpha)|
	\end{equation}
	for $N\in\N$ and some $\alpha\in\R$. Note that by \eqref{eq:defPN} it follows that $P_N(\alpha) =P_N(\{\alpha\})$. Further, for rational $\alpha$ the product $P_N(\alpha)$ will eventually become zero for sufficiently large $N$. Therefore we will only focus on irrational $\alpha$ in the interval $(0,1)$. 
	
	These products are often referred to as Sudler products and have first been studied more than 60 years ago by Erd\H{o}s and Szekeres \cite{Es59} in 1959  and by Sudler \cite{S64} in 1964. 
	Sudler products appear in a variety of different fields in pure and applied mathematics e.g. partition theory \cite{S64,W64}, Padé approximation, continued fractions (see \cite{L99} and the references therein), discrepancy and uniform distribution theory \cite{ALPET18,HP17} and in different topics related to mathematical physics \cite{AB,AHS13,BD,GOPY84,KT11,KPF95}. We refer to \cite{VM16} for a very detailed and well structured introduction to Sudler products.
	
	In 2016 Mestel and Verschueren \cite{VM16} studied a certain subsequence of $(P_N(\alpha))_N{\in\NN}$ for the special case of $\alpha$ being the golden ratio. The main result of \cite{VM16} reads as follows.
	\begin{thmAlph}[{\cite[Theorem~3.1]{VM16}}]\label{thm:MVMainResult}
		Let $\varphi=(\sqrt{5}-1)/2$ and $(F_n)_{n\geq0} = (0,1,1,2,3,\ldots)$ the Fibonacci sequence. Then there exists a constant $c>0$ such that
		\begin{equation}\label{eq:ConvGoldenRatio}
			\lim_{n\to\infty}P_{F_n}(\varphi)=c. 
			\end{equation}
	\end{thmAlph}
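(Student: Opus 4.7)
The plan is a renormalization argument exploiting the Fibonacci recursion $F_{n+1} = F_n + F_{n-1}$ together with the identity $F_n\varphi - F_{n-1} = (-1)^{n+1}\varphi^n =: \delta_n$ (from Binet's formula). Splitting the product in $P_{F_{n+1}}(\varphi)$ at the index $F_n$ and re-indexing the tail by $r = F_n + s$, and using $F_{n-1}\in\ZZ$ together with the $\pi$-periodicity of $|\sin|$, one obtains
\begin{equation*}
P_{F_{n+1}}(\varphi) \;=\; P_{F_n}(\varphi)\cdot\prod_{s=1}^{F_{n-1}}\bigl|2\sin(\pi(s\varphi + \delta_n))\bigr|.
\end{equation*}
Comparing the tail with $P_{F_{n-1}}(\varphi)$ gives the three-term renormalization
\begin{equation*}
P_{F_{n+1}}(\varphi) \;=\; \rho_n\cdot P_{F_n}(\varphi)\cdot P_{F_{n-1}}(\varphi),\qquad \rho_n := \prod_{s=1}^{F_{n-1}}\frac{|\sin(\pi(s\varphi + \delta_n))|}{|\sin(\pi s\varphi)|}.
\end{equation*}

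The analytic core is to show that $\rho_n \to \rho_\infty > 0$ with a geometric rate. Most factors of $\rho_n$ differ from $1$ by only $O(\delta_n\cot(\pi s\varphi))$; however, by the three-distance theorem the smallest value of $\|s\varphi\|$ over $s\in\{1,\ldots,F_{n-1}\}$ is attained at $s = F_{n-1}$ and equals $\varphi^{n-1}$, which is only a factor $\varphi$ larger than $|\delta_n|$. Hence the $s = F_{n-1}$ factor yields an honest $O(1)$ change of size $\sim\varphi^2$, not a $1+o(1)$ correction. I would isolate a bounded number of such near-zero indices, read off from the Ostrowski layering of $\{s\varphi\}_{s=1}^{F_{n-1}}$, and compute their limiting contribution directly via $|2\sin(\pi x)|\sim 2\pi|x|$ as $x\to 0$; for the remaining bulk I would apply the Taylor expansion $\log\frac{|\sin(\pi(x+\delta))|}{|\sin(\pi x)|} = \pi\delta\cot(\pi x) + O(\delta^2\csc^2(\pi x))$, with the resulting sums $\sum\cot(\pi s\varphi)$ and $\sum\csc^2(\pi s\varphi)$ controlled by the rigid gap structure from the three-distance theorem.

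The main obstacle is then to upgrade convergence of $\rho_n$ to convergence of $P_{F_n}(\varphi)$ itself. Setting $a_n := \log P_{F_n}(\varphi)$, the renormalization becomes the inhomogeneous Fibonacci-type recursion $a_{n+1} = a_n + a_{n-1} + \log\rho_n$, whose homogeneous part has the unstable eigenvalue $\lambda = (1+\sqrt 5)/2 = 1/\varphi > 1$; mere convergence of $\log\rho_n$ does not suffice, since a generic solution grows like $\lambda^n$. The argument must instead proceed via a Cauchy-type bootstrap, leveraging the geometric rate $|\log\rho_n - \log\rho_\infty| = O(\varphi^{n})$ established in the previous step: by an inductive estimate one shows simultaneously that $P_{F_n}(\varphi)$ is geometrically close to its putative limit and that $|a_{n+1}-a_n|$ is summable, forcing Cauchyness of $\{a_n\}$ and hence $P_{F_n}(\varphi)\to c := 1/\rho_\infty > 0$.
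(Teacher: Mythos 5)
Your splitting identity and the renormalization $P_{F_{n+1}}(\varphi)=\rho_n P_{F_n}(\varphi)P_{F_{n-1}}(\varphi)$ are correct, and the analysis of $\rho_n$ is plausible in outline; this is genuinely different from the route the paper takes (a closed decomposition $P_{q_n}=A_nB_nC_n$ at each fixed $n$, based on the exact identity $\prod_{r=1}^{q-1}2\sin(\pi rp/q)=q$, with convergence of each factor proved directly). The genuine gap is the final step, and you have in fact put your finger on it without resolving it: the recursion $a_{n+1}=a_n+a_{n-1}+\log\rho_n$ has the unstable eigenvalue $\lambda=(1+\sqrt5)/2=1/\varphi$, and \emph{no} information about the forcing term alone --- not even the geometric rate $|\log\rho_n-\log\rho_\infty|=O(\varphi^n)$ --- can single out the convergent solution. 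Concretely, for the true forcing sequence, every sequence $a_n+A\lambda^n$ satisfies the same recursion, so any deduction of the form ``recursion $+$ convergence of $\log\rho_n$ $\Rightarrow$ convergence of $a_n$'' is invalid; projecting onto the eigendirections, the unstable component obeys $x_{n+1}=\lambda x_n+c\,\varepsilon_n$ with $\varepsilon_n=\log\rho_n-\log\rho_\infty$, hence
\begin{equation*}
x_n=\lambda^{\,n-n_0}\Bigl(x_{n_0}+c\sum_{k=n_0}^{n-1}\lambda^{\,n_0-1-k}\varepsilon_k\Bigr),
\end{equation*}
and convergence of $a_n$ is equivalent to the codimension-one identity $x_{n_0}=-c\sum_{k\ge n_0}\lambda^{\,n_0-1-k}\varepsilon_k$. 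A forward ``Cauchy-type bootstrap'' cannot close, because the induction inequality $|e_{n+1}|\le|e_n|+|e_{n-1}|+C\varphi^n$ amplifies by the factor $\lambda$ at each step rather than contracting. What is actually needed is an a priori sub-exponential bound $\log P_{F_n}(\varphi)=o(F_n)$ (this kills the unstable mode, after which $\varepsilon_n\to0$ does give $a_n\to-\log\rho_\infty$). The trivial estimates do not supply it: $\log P_N\le N\log2$ from above, and $\log P_N\ge -CN\log N$ from below using $\|r\varphi\|\ge c/r$, are both of order $N$ or worse. Supplying such a bound (Hardy--Littlewood-type estimates for Birkhoff sums of $\log|2\sin\pi x|$, or a direct decomposition of $P_{q_n}$ as in the paper) is essentially the nontrivial content of the theorem, so as written the proposal assumes what it needs to prove.

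A secondary, fixable inaccuracy: in the bulk estimate the second-order error is $\delta_n^2\sum_s\csc^2(\pi s\varphi)$, and by the three-distance structure the $j$th smallest of the values $\|s\varphi\|$, $s\le F_{n-1}$, is of size $\asymp j\varphi^{n}$; removing only a \emph{bounded} number of near-zero indices therefore still leaves an error of size $O(1)$, not $o(1)$. You need to excise the $J_n$ smallest values with $J_n\to\infty$ (arbitrarily slowly) and treat those terms separately via $|2\sin\pi x|\sim2\pi|x|$, which also forces a slightly more careful bookkeeping of their limiting product. This part can be repaired; the unstable-recursion step is the one that requires a genuinely new ingredient, and it is precisely the difficulty that the paper's $A_nB_nC_n$ decomposition is designed to avoid by never passing through a recursion in $n$.
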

	It is well known that the Fibonacci sequence $(F_n)_{n\geq0}$ is the sequence of best approximation denominators of the golden ratio. Looking at Theorem~\ref{thm:MVMainResult} the rather natural question arises if one can generalize the convergence behaviour described in \eqref{eq:ConvGoldenRatio} to other irrationals $\alpha$ as well. This question was answered by Grepstad and Neum\"uller \cite{GN18} for quadratic irrationals.
	\begin{thmAlph}[{\cite[Theorem~1.2]{GN18}}]\label{thm:QuadIrrConv}
	Let $\alpha=[0;\overline{a_1,\ldots,a_{\ell}}]$ be a purely periodic quadratic irrational  \footnote{We write $[0;\overline{a_1,\ldots,a_{\ell}}]$ as an abbreviation for the periodic continued fraction expansion $[0;a_1,\ldots,a_{\ell},a_1,\ldots,a_{\ell},\ldots]$.}
	of period length $\ell$ with best approximation denominators $(q_n)_{n\in\N}$.
	Then there exist real constants $C_0,\ldots,C_{\ell-1}>0$ such that we have for each $k\in\{0,\ldots,\ell-1\}$ 
	\begin{equation*}
		\lim _{i\to \infty} P_{q_{\ell i+k}}(\alpha)=C_k.
	\end{equation*}
\end{thmAlph}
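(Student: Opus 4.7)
The plan is to extend the Mestel--Verschueren strategy underlying Theorem~\ref{thm:MVMainResult} from the golden ratio to a general purely periodic quadratic irrational by exploiting the periodicity of the partial quotients $a_1,\ldots,a_\ell$. I would establish convergence separately along each of the $\ell$ residue classes $n\equiv k\pmod{\ell}$ by analysing a one-parameter family of perturbed Sudler products and iterating the corresponding recursion once per period of length $\ell$.

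First I would introduce the perturbed products
\begin{equation*}
\Phi_n(x):=\prod_{r=1}^{q_n}|2\sin(\pi(r\alpha+x))|,\qquad x\in\R,
\end{equation*}
so that $P_{q_n}(\alpha)=\Phi_n(0)$, and derive a functional recursion relating $\Phi_{n+1}$ to $\Phi_n$ using the Ostrowski-type decomposition $r=jq_n+s$ with $0\le j<a_{n+1}$, $1\le s\le q_n$, plus a residual block of length $q_{n-1}$. Since $\{q_n\alpha\}=(-1)^n\|q_n\alpha\|$ is of order $1/q_{n+1}$, the shifts entering this recursion are small and can be handled perturbatively. Iterating the one-step recursion $\ell$ times produces a relation between $\Phi_{n+\ell}$ and $\Phi_n$ whose combinatorial structure depends only on the finite block $(a_1,\ldots,a_\ell)$ and is therefore the same for every $n$ in a fixed residue class $k\pmod{\ell}$. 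Taking logarithms, I would show that this $\ell$-step relation acts as a contraction on an appropriate space of real-analytic functions defined in a complex neighbourhood of $0$, with contraction rate governed by the dominant eigenvalue of $\prod_{j=1}^{\ell}\bigl(\begin{smallmatrix}a_j & 1\\ 1 & 0\end{smallmatrix}\bigr)$, which is precisely the rate of geometric decay of $\|q_n\alpha\|$ in the periodic case. The Banach fixed point theorem then delivers, for each $k\in\{0,\ldots,\ell-1\}$, a limit function $\Phi_\infty^{(k)}$ and in particular a constant $C_k:=\Phi_\infty^{(k)}(0)$, giving the desired convergence $P_{q_{\ell i+k}}(\alpha)\to C_k$.

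The main obstacle is controlling the near-singular factors: since $|2\sin(\pi q_j\alpha)|\asymp 1/q_{j+1}$, the product $P_{q_n}(\alpha)$ contains a growing number of very small factors whose logarithms blow up. One must show that these contributions cancel against the bulk sum $\sum_{r=1}^{q_n}\log|2\sin(\pi r\alpha)|$ tightly enough that $\log\Phi_n(0)$ converges along each residue class; this requires sharp estimates of the bulk via a Koksma--Hlawka-type argument combined with the three-distance theorem, together with a precise bookkeeping of the near-singular factors distributed one per element of the period. The positivity $C_k>0$ also follows from this analysis, since the lower bound $|2\sin(\pi q_j\alpha)|\gtrsim 1/q_{j+1}$ combined with the geometric growth of $q_{j+1}$ keeps $\sum_j\log|2\sin(\pi q_j\alpha)|$ summable in a manner compatible with the contraction. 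Pushing this uniform control through the full period of length $\ell$ is precisely what lifts the one-step golden-ratio argument of \cite{VM16} to the general purely periodic setting.
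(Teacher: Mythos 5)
Your route (perturbed products $\Phi_n(x)$, an $\ell$-step renormalization relation, and a contraction on a space of analytic functions) is genuinely different from the argument this statement rests on: in the present paper Theorem~\ref{thm:QuadIrrConv} is recovered as a special case of Theorem~\ref{thm:ConvBoundedCFC}, by checking that $\alpha_n^{+}$ and $\alpha_n^{-}$ converge along each residue class $n=\ell i+k$, and Theorem~\ref{thm:ConvBoundedCFC} itself is proved through the explicit factorization $P_{q_n}(\alpha)=A_nB_nC_n$ of Lemma~\ref{lem:decomp} together with the asymptotics of Lemma~\ref{lem:StructurePqn} (sums of $D_t(\alpha_n^-)$ controlled via Ostrowski digits), following \cite{VM16,GN18}. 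The problem is that your proposal has gaps exactly at its decisive steps. The contraction claim is not established: the relation between $\Phi_{n+\ell}$ and $\Phi_n$ is not the iteration of a single fixed operator, because the shifts $j\Lambda_n$ and the residual block of length $q_{n-1}$ change with $n$; to even pose a fixed-point problem you must rescale the argument (the relevant scale of $x$ is $1/q_n$) and show the resulting non-autonomous operators converge to a limiting operator, which you must then prove is a contraction on a concretely specified space. Saying the contraction rate is ``governed by the dominant eigenvalue'' of the period matrix is not an argument: that eigenvalue controls the growth of $q_n$, not the Lipschitz constant of the induced map on logarithms of products. Likewise, the cancellation of the near-singular factors against the bulk --- which you correctly identify as the main obstacle --- is precisely the analytic content of the theorem and is left at the level of ``one must show''; note in addition that a Koksma--Hlawka bound is not directly applicable here, since $\log|2\sin(\pi x)|$ is not of bounded variation on $[0,1]$.

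There is also a concrete error in your positivity argument. The factors at $r=q_j$ satisfy $|2\sin(\pi q_j\alpha)|\asymp 1/q_{j+1}$ with $q_{j+1}$ growing geometrically, so $\sum_{j<n}\log|2\sin(\pi q_j\alpha)|\asymp-\sum_{j<n}\log q_{j+1}$ diverges to $-\infty$; this sum is emphatically not summable, and $C_k>0$ cannot be extracted from it in the way you indicate. Positivity has to come out of the same delicate cancellation with the bulk; in the paper's approach it appears because each factor $A_n$, $B_n$, $C_n$ is shown to converge to a strictly positive limit (see \eqref{eq:ConvAn}, \eqref{eq:ConvBn}, \eqref{eq:ConvCn}). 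A renormalization proof along your lines may well be feasible, but as written the proposal defers the analytic core (uniform control of the singular factors and an actual contraction estimate after rescaling) and contains this incorrect summability claim, so it does not yet constitute a proof of the theorem.
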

\begin{rem}\label{rem:genVersionQuadIrrConv}
	For simplicity Theorem~\ref{thm:QuadIrrConv} is stated only for the case of a purely periodic quadratic irrational but in fact very similar results also hold for arbitrary quadratic irrationals. For more details the author refers to \cite{GN18}.
\end{rem}

	The author would like to point out that the machinery behind Theorem~\ref{thm:MVMainResult} and Theorem~\ref{thm:QuadIrrConv} was one of the key ingredients for rather surprising results of Grepstad, Kaltenb\"ock and Neum\"uller \cite{GKN19} and Aistleitner, Technau and Zafeiropoulos \cite{ATZ} on the behaviour of $\liminf_{N\to\infty}P_N(\alpha)$ for special choices of quadratic irrationals $\alpha$. In order to put this into context it should be mentioned that almost 60 years ago Erd\H{o}s and Szekeres posed the following question in \cite{Es59}: What do we know about $\liminf_{N\to\infty} P_N(\alpha)$?
In the same paper they could prove that for almost all $\alpha$ it holds that $\liminf_{N\to\infty} P_N(\alpha)=0$ and they suggested that this result is true for all $\alpha$. Also Lubinsky made a similar suggestion in \cite{L99} after proving that $\liminf_{N\to\infty}P_N(\alpha)=0$ if $\alpha$ has unbounded continued fraction coefficients. It was rather surprising when Grepstad, Kaltenb\"ock and Neum\"uller \cite{GKN19} could prove that the golden ratio $\varphi$ is a counter example to these suggestions i.e.
\begin{equation}\label{eq:LiminfGreaterZero1}
	\liminf_{N\to\infty}P_N(\varphi)>0.
\end{equation}
For more detailed information on this topic we refer to the survey paper \cite{GrKaNe20}. Later Aistleitner, Technau and Zafeiropoulos \cite{ATZ} could fully characterize which quadratic irrationals $\alpha$ of period 1 (i.e. $\alpha$ with continued fraction expansions of the form $\alpha=[a;a,a,a,\ldots]$ for some $a\in\NN$) fulfill \eqref{eq:LiminfGreaterZero1}. 
	\begin{thmAlph}[{\cite[Theorem~6]{ATZ}}]\label{thm:LiminfGreaterZero2}
	Let $\alpha \in (0,1)$ and irrational with continued fraction expansion $\alpha=[a;a,a,a,\ldots]$. Then the following holds. 
	\begin{enumerate}
	\item If $a \in \{1,\ldots,5\}$ then 
		$$\liminf_{N\to\infty} P_N(\alpha)>0 \text{ and } \limsup_{N\to\infty} P_N(\alpha)/N <\infty.$$
	\item If $a\geq 6$ then
	$$\liminf_{N\to\infty} P_N(\alpha)=0 \text{ and } \limsup_{N\to\infty} P_N(\alpha)/N =\infty.$$
	\end{enumerate}
	\end{thmAlph}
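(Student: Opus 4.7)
The plan is to build on Theorem~\ref{thm:QuadIrrConv}, which already guarantees $P_{q_n}(\alpha) \to C$ for some $C > 0$ along the best approximation denominators, and to extend this control to arbitrary $N$ by means of the Ostrowski representation of $N$ with respect to $\alpha$. Concretely, every positive integer $N$ admits a unique representation $N = \sum_{j=1}^{k} b_j q_{j-1}$ with digits $b_j \in \{0,1,\ldots,a\}$ subject to the usual admissibility rules. Since $\alpha = [a; a, a, \ldots]$ has period one, the three-distance structure and the recurrence $q_n = a q_{n-1} + q_{n-2}$ govern everything uniformly in $n$.

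The central step is to factor $P_N(\alpha)$ into blocks indexed by the Ostrowski digits. To this end one introduces a parametric version
\begin{equation*}
P_n(\alpha,\varepsilon) := \prod_{r=1}^{q_n}|2\sin(\pi(r\alpha+\varepsilon))|
\end{equation*}
for $\varepsilon$ in a small neighbourhood of $0$, and proves, by adapting the arguments of \cite{VM16} and \cite{GN18}, that $P_n(\alpha,\varepsilon) \to G(\alpha,\varepsilon)$ uniformly on compact subsets of a suitable domain, for an explicit limit function $G$. Iterating the decomposition digit by digit through the Ostrowski expansion of $N$ then expresses $P_N(\alpha)$, up to a controlled multiplicative error, as a product of factors $G(\alpha,\varepsilon_j)$, one for each ``block'' of length $q_{j-1}$, where each perturbation parameter $\varepsilon_j$ is determined by the remaining Ostrowski digits past position $j$.

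Having reduced the problem to analysing $\prod_j G(\alpha,\varepsilon_j)$, both parts of the theorem follow from studying the multiplicative behaviour of these blocks. The potentially dangerous factors come from strings of consecutive digits equal to $a$, corresponding to perturbations $\varepsilon_j$ clustering near a specific point $\varepsilon^*$ where $G(\alpha,\cdot)$ attains its largest and smallest values. A direct computation shows that the relevant ``worst-case'' product behaves like $G(\alpha,\varepsilon^*)^{a}$, and the critical threshold is the value of $a$ at which this quantity crosses $1$: for $a\in\{1,\ldots,5\}$ one has $G(\alpha,\varepsilon^*)^{a}\le 1$, so the blocks only contribute bounded multiplicative perturbations and consequently $\liminf_{N\to\infty} P_N(\alpha)>0$ and $\limsup_{N\to\infty} P_N(\alpha)/N<\infty$; for $a\ge 6$ the quantity exceeds $1$, and by choosing $N$ whose Ostrowski expansion contains arbitrarily long runs of $a$'s one can force arbitrarily large (respectively, arbitrarily small) contributions, yielding $\limsup_{N\to\infty} P_N(\alpha)/N=\infty$ and, symmetrically, $\liminf_{N\to\infty} P_N(\alpha)=0$.

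The main obstacle is twofold. First, one must establish uniform convergence of the perturbed Sudler products $P_n(\alpha,\varepsilon)$ together with a sharp enough rate to control the accumulated error across the $\Theta(\log N)$ blocks in the Ostrowski decomposition; the techniques of \cite{VM16} and \cite{GN18} yield the unperturbed convergence but require nontrivial upgrades to handle the $\varepsilon$-dependence uniformly. Second, the boundary between the two regimes is a delicate numerical fact: one has to verify explicitly that the critical quantity passes through $1$ precisely between $a=5$ and $a=6$, which requires reasonably sharp estimates for $G(\alpha,\varepsilon^*)$ and cannot be avoided by purely soft arguments.
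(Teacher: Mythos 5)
The statement you are proving is not established in this paper at all: it is Theorem~\ref{thm:LiminfGreaterZero2}, imported verbatim from \cite{ATZ}, and the machinery developed here (Lemma~\ref{lem:decomp}, Lemma~\ref{lem:StructurePqn}, Theorem~\ref{thm:PqnBehaviour}) only controls $P_{q_n}(\alpha)$ along the best-approximation denominators, which by itself cannot decide $\liminf_{N}P_N(\alpha)$ over all $N$. Your outline does identify the route actually used in the literature for this result (shifted Sudler products combined with the Ostrowski expansion of $N$, as in \cite{GKN19} and \cite{ATZ}), so the strategy is sound in spirit, but there is no internal proof in the paper to compare it against.

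As a proof, however, the proposal has genuine gaps at every quantitatively decisive point. First, the uniform convergence of the perturbed products $P_n(\alpha,\varepsilon)$ to a limit function $G(\alpha,\varepsilon)$, with a rate strong enough that the multiplicative errors accumulated over the $\Theta(\log N)$ Ostrowski blocks remain bounded, is asserted rather than proved; this is the technical heart of the matter, since for shifts that bring some $r\alpha+\varepsilon$ close to an integer the corresponding factors are nearly zero, so ``uniformly on compact subsets of a suitable domain'' must be made precise and the dangerous shifts treated separately -- exactly the delicate part of \cite{GKN19} and \cite{ATZ}. Second, the claimed criterion -- that the worst-case block contribution behaves like $G(\alpha,\varepsilon^*)^{a}$ and that the dichotomy occurs where this quantity crosses $1$ -- is not derived from anything; the actual threshold between $a=5$ and $a=6$ rests on a finer analysis of the limit functions together with explicit numerical estimates, none of which appear in the sketch. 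Third, the assertions about $\limsup_{N} P_N(\alpha)/N$ do not follow ``symmetrically'' from the $\liminf$ statements: relating the growth of $P_N/N$ to the smallness of other products requires a separate argument (a duality between a product of length $N$ and a complementary product), and the sketch supplies nothing in that direction. In short, the proposal is a plausible roadmap that matches the known approach, but each of the steps that actually decides the theorem -- uniform control of the perturbed products, the precise threshold computation, and the liminf/limsup linkage -- is missing.
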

	
	Inequality \eqref{eq:LiminfGreaterZero1} and Theorem~\ref{thm:LiminfGreaterZero2} show that the asymptotic behaviour of $P_N(\alpha)$ is not only depending on the structure of the continued fraction expansion of $\alpha$ but also on the actual size of the continued fraction coefficients.

	The main goal of this article is to understand the asymptotic behaviour of the sequence $(P_{q_n(\alpha)}(\alpha))_{n\in\NN}$ for a fixed irrational $\alpha$ and where $q_n(\alpha)$ is the $n$th best approximation denominator of $\alpha$. It will turn out that if $\alpha$ has bounded continued fraction coefficients the convergence behaviour described in Theorem~\ref{thm:QuadIrrConv} can be generalized to this case (in some sense). On the other hand if the continued fraction coefficients of $\alpha$ are unbounded this will lead to a different behaviour. We will give upper and lower bounds in this case.

Let us continue with some important notations and the main results of this article.


\subsection{Notations and formulation of the main results}\label{subsec:MainResults}
	
	Let $\alpha\in(0,1)$ and irrational. We denote by $p_n(\alpha)$ and $q_n(\alpha)$ the $n$th best
	approximation numerator and denominator, respectively, of $\alpha$. For some sequence $(a_n)_{n\in\NN}\in\N$ we write $\alpha=[0;a_1,a_2,\ldots]$ for the continued fraction expansion (c.f.e.) of $\alpha$. The integers $(a_n)_{n\in\NN}\in\N$ are called the continued fraction coefficients (c.f.c.) of $\alpha$.

	It will turn out that the first component which determines the asymptotic behaviour of $(P_{q_n(\alpha)}(\alpha))_{n\in\N}$ are the sequences $(\alpha^+_n(\alpha))_{n\in\NN}$ and $(\alpha^-_n(\alpha))_{n\in \N}$ given by
\begin{align}\label{eq:defAlphPlusAlphMinus}
\alpha^+_n(\alpha):=[a_n;a_{n+1},a_{n+2},\ldots] \text{ and }
\alpha^-_n(\alpha):=[0;a_{n-1},a_{n-2},\ldots,a_1].
\end{align}
The sequences $(\alpha^+_n(\alpha))_{n\in\NN}$ and $(\alpha^-_n(\alpha))_{n\in\NN}$ describe sequences of c.f.e. which are depending on the c.f.c. of $\alpha$. More precisely, for $\alpha_n^+(\alpha)$ we consider the c.f.e. consisting of the c.f.c. $a_m$ of $\alpha$ with $m> n$ and $\floor{\alpha_n^+(\alpha)} = a_n$. For $\alpha_n^{-}(\alpha)$ we consider the c.f.e. consisting of the coefficients $a_1,\ldots,a_{n-1}$ of $\alpha$, but in reverse order.\\
The second component that influences the asymptotic behavior of the sequence $(P_{q_n(\alpha)}(\alpha))_{n\in\N}$ is of the following form. For $\beta \in \RR$ let 
\begin{equation}\label{eq:DefDt}
		D_t(\beta):=\sum_{s=1}^t\{\beta s\}-\frac{1}{2}.
\end{equation}
This function has been extensively studied by Hardy and Littlewood \cite{HL21,HL22} and Ostrowski \cite{O22}, but also by Sós \cite{S57} and Brown and Shiue \cite{BS95}. Further references can be found in \cite{S92}.

In order to ease notation in what follows we set
\begin{equation}\label{eq:cn}
	c_n(\alpha):=\frac{1}{\alpha_n^+(\alpha) + \alpha_n^-(\alpha)}.
\end{equation}
\noindent
Moreover, we will drop the dependence on $\alpha$ of the quantities $p_n(\alpha)$, $q_n(\alpha)$, $\alpha^+_n(\alpha)$, $\alpha^-_n(\alpha)$ and $c_n(\alpha)$ if it is clear from context.

 The subsequent theorem relates the asymptotic behaviour of $(P_{q_{n}}(\alpha))_{n\in\N}$ to the sequences $(\alpha^+_n)_{n\in\NN}$, $(\alpha^-_n)_{n\in\NN}$ and $(D_t(\alpha_n^-))_{t\in\NN}$ for $n\in\NN$. 
	\begin{thm}\label{thm:PqnBehaviour}
		Let $\alpha\in(0,1)$ be irrational and set $M_n=\lfloor(q_n-1)/2\rfloor$. We have that
		\begin{equation}
			P_{q_n}(\alpha) = \Theta\left(c_n\exp\Big( - 2 c_n\sum_{t=1}^{M_n-1}\frac{D_t(\alpha_n^-)}{t(t+1)} \Big)\right) \text{ for } n\to\infty,
		\end{equation}
		where the implied constants \footnote{For two functions $f,g:\RR\rightarrow\RR$ and $a\in\RR\cup\{\infty,-\infty\}$ we write $f(x)=\Theta(g(x))$ for $x\to a$ if there exist constants $c,C>0$ such that for $x\to a$ we have $c|g(x)| \leq |f(x)| \leq C|g(x)|$.} are independent of $n$, and $\alpha_n^-$, $D_t(\alpha_n^{-})$ and $c_n$ are defined in \eqref{eq:defAlphPlusAlphMinus}, \eqref{eq:DefDt} and \eqref{eq:cn}, respectively.
		\end{thm}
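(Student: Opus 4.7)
The strategy is to reduce $P_{q_n}(\alpha)$ to a perturbation of the explicit product at the best rational approximation $p_n/q_n$, exploiting the classical identity $\prod_{r=1}^{q_n-1}|2\sin(\pi r p_n/q_n)| = q_n$, and then to convert the resulting Taylor correction into the advertised exponential in $D_t(\alpha_n^-)$ by Abel summation.

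To begin with, I isolate the single near-vanishing factor by writing
\[
P_{q_n}(\alpha) = |2\sin(\pi q_n\alpha)|\cdot \prod_{r=1}^{q_n-1}|2\sin(\pi r\alpha)|,
\]
and using $\{q_n\alpha\} \in \{\delta_n,1-\delta_n\}$ with $\delta_n = 1/(\alpha_{n+1}^+ q_n + q_{n-1})$ to get $|2\sin(\pi q_n\alpha)| = 2\pi\delta_n(1+O(\delta_n^2))$. For $r \in \{1,\ldots,q_n-1\}$ let $s(r) \in \{1,\ldots,q_n-1\}$ be the unique residue with $s(r) \equiv r p_n \pmod{q_n}$; since $|\alpha - p_n/q_n| < 1/q_n^2$, the identity $\{r\alpha\} = s(r)/q_n + (-1)^n r\delta_n/q_n$ holds without wrap-around. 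Combining the sine addition formula, the classical identity above, and a Taylor expansion in the small quantity $\phi_r := \pi(-1)^n r\delta_n/q_n$, I obtain
\[
P_{q_n}(\alpha) = 2\pi q_n\delta_n\bigl(1+O(\delta_n^2)\bigr)\prod_{r=1}^{q_n-1}\Bigl|1+\phi_r\cot(\pi s(r)/q_n)+O\bigl(\phi_r^2\csc^2(\pi s(r)/q_n)\bigr)\Bigr|.
\]
The prefactor $2\pi q_n\delta_n$, together with a boundary contribution extracted below, will yield the leading $c_n$ up to bounded ratios absorbed into the $\Theta$; the higher-order remainders of $\log(1+x)$ are summable since $|\phi_r\cot(\pi s(r)/q_n)| = O(1)$ uniformly — even at the exceptional $r$ with $s(r) \in \{1,q_n-1\}$, where $r \asymp q_{n-1}$ and $r\delta_n < 1$.

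The main work is then to identify the linear-in-$\phi$ log contribution
\[
\sum_{r=1}^{q_n-1}\phi_r\cot(\pi s(r)/q_n)
\]
with the exponent $-2c_n\sum_{t=1}^{M_n-1} D_t(\alpha_n^-)/(t(t+1))$. Reindexing by $s = s(r)$, so that $r(s) \equiv (-1)^{n-1}s q_{n-1}\pmod{q_n}$, folding $s \leftrightarrow q_n - s$ via $\cot(\pi(q_n-s)/q_n) = -\cot(\pi s/q_n)$, and replacing $\cot(\pi s/q_n) = q_n/(\pi s) + O(s/q_n)$, the sum takes the form $q_n\delta_n\sum_{s=1}^{M_n}(2\{s q_{n-1}/q_n\}-1)/s$ up to $O(1)$ error. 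A second Abel summation using the telescoping $1/(t(t+1)) = 1/t - 1/(t+1)$ rewrites this in terms of the partial sums $D_t(\cdot) = \sum_{s=1}^t(\{s\,\cdot\}-1/2)$, producing the weighted form in the statement. The Ostrowski identity $q_n = a_n q_{n-1} + q_{n-2}$, exploited under the folding symmetry, converts the argument of $D_t$ from the naively-appearing $\alpha_{n+1}^- = q_{n-1}/q_n$ to $\alpha_n^- = q_{n-2}/q_{n-1}$, and simultaneously the prefactor from $q_n\delta_n = c_{n+1}$ to $q_{n-1}\delta_{n-1} = c_n$.

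The hard part is precisely this last Ostrowski-type identification. A naive reindexing gives partial sums of $\{s\alpha_{n+1}^-\}$ weighted by $c_{n+1}$, rather than the $D_t(\alpha_n^-)$ and prefactor $c_n$ in the theorem; bridging these requires a careful transformation under the folding symmetry $s \leftrightarrow q_n - s$ together with a uniform verification that all boundary terms produced by the Abel summation stay $O(1)$ so that they merge into the $\Theta$-constants and do not spoil the claimed asymptotic.
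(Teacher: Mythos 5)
Your computational skeleton is essentially the paper's: isolate the $r=q_n$ factor, compare the remaining product with $\prod_{r=1}^{q_n-1}|2\sin(\pi r p_n/q_n)|=q_n$ via the substitution $s\equiv rp_n \pmod{q_n}$, expand the perturbation, and convert the linear term of the logarithm into a weighted sum of the partial sums $D_t$ by summation by parts. The genuine gap is exactly the step you yourself call ``the hard part,'' and it stems from an indexing mix-up. The statement is formulated in the paper's conventions, where the indexing of $p_n,q_n$ is deliberately offset by one ($q_0=0$, $q_1=1$, $q_{n+1}=a_nq_n+q_{n-1}$), so that $\alpha_n^-=[0;a_{n-1},\ldots,a_1]=q_{n-1}/q_n$ by \eqref{eq:AlphMinusFrac} and $c_n=q_n|\Lambda_n|=q_n\,|q_n\alpha-p_n|$ by Lemma~\ref{lem:PropLambda}. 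Hence the quantities your ``naive reindexing'' produces, namely $\{s\,q_{n-1}/q_n\}$ and the prefactor $q_n\delta_n$, already \emph{are} $\{s\,\alpha_n^-\}$ and $c_n$; there is nothing left to convert, and no Ostrowski-type identification is needed. You instead label them $\alpha_{n+1}^-$ and $c_{n+1}$ (which is what one gets when mixing the standard recursion for $q_n$ with the paper's indexing of $\alpha_n^{\pm}$ and $c_n$) and then defer, without proof, a transformation of $c_{n+1}\sum_t D_t(\alpha_{n+1}^-)/(t(t+1))$ into $c_n\sum_t D_t(\alpha_n^-)/(t(t+1))$ up to $O(1)$. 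That bridging step is not only missing, it cannot exist: combined with the (true) theorem it would force $P_{q_{n+1}}(\alpha)=\Theta(P_{q_n}(\alpha))$ uniformly in $n$, which fails e.g.\ for $\alpha=\mathrm{e}$, where consecutive subsequences satisfy $P_{q_{3i}}(\mathrm{e})\to\infty$ and $P_{q_{3i+1}}(\mathrm{e})\to 0$ (Corollary~\ref{cor:behaviourEuler}). So as written the proposal does not prove the statement; read in the paper's indexing it collapses to the paper's argument with the final ``hard part'' being vacuous.

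A secondary point: your justification of the error terms is too quick. You discard the higher-order remainders of $\log(1+x)$, $x=\phi_r\cot(\pi s(r)/q_n)$, ``since $|x|=O(1)$ uniformly,'' but per-term boundedness does not control a sum of $q_n-1$ remainders, and for the logarithmic expansion one needs $1+x$ uniformly bounded away from $0$. In your unsymmetrized expansion the coefficient at position $s$ is of size $r(s)\delta_n/\min(s,q_n-s)$ with $r(s)/q_n=\{\pm s\,q_{n-1}/q_n\}$ not centered, so ruling out $x$ near $-1$ at small $s$ requires a sign/size argument (or unbounded-partial-quotient subsequences must be handled separately). This is precisely why the paper first shifts by $\Lambda_n/2$, so that the perturbation enters through the centered quantity $\xi_{nt}=\{tq_{n-1}/q_n\}-1/2$ with $|h_{nt}|\le c_n|\xi_{nt}|/t\le 1/(2t)$, splits off the factor $C_n$ with the bound $s_{nt}>s_{n0}$, and truncates at $\kappa_n,\tau_n=\calO(q_n^{1/2})$; some such device is needed to make your estimates uniform in $n$.
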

		We will see later on that both of the sequences $(c_n)_{n\in\NN}$ and $(D_t(\alpha_n^-))_{t\in\NN}$ change their behaviour depending on whether the c.f.c. of $\alpha$ are unbounded or bounded. It is rather easy to see that $(c_n)_{n\in\NN}$ either tends to zero or is bounded from above and below by some constant greater zero. The situation is more complex for $(D_t(\alpha_n^-))_{t\in\NN}$. Here the behaviour of both cases can be described with the help of the Ostrowski expansion of $t$ in base $\alpha_n^-$ (see Section~\ref{sec:Prelim} for more details). The above mentioned observations lead to the following corollary.
		
		\begin{cor}\label{cor:PqnBehaviourTypeOfCFC}
			Let $\alpha \in (0,1)$ be irrational and $M_n=\floor{(q_n-1)/2}$. Denote for each $t\in\{1,\ldots,M_n-1\}$
			the digits of the Ostrowski expansion of $t$ in base $\alpha_n^-$ by $v_j(t)$ i.e. $t = \sum_{j=1}^{N(t)} v_j(t)q_j(\alpha_n^-)$. 
			\begin{enumerate}
			 \item If $\alpha$ has bounded c.f.c. we have
					\begin{equation}
						K_2\leq P_{q_n}(\alpha) \leq K_1,
					\end{equation}
					where $K_1,K_2 \geq 0$ and independent of $n$.
		\item If $\alpha$ has unbounded c.f.c. we have
			\begin{equation*}
				P_{q_n}(\alpha) = \Theta\left(\frac{1}{a_n}\exp\left(\frac{1}{a_n}Y_n(\alpha)\right)\right) \text{ for } n\to\infty,
			\end{equation*}
			where the implied constants are independent of $n$ and  
			\begin{equation}
				Y_n(\alpha) = \sum_{t=1}^{M_n-1} \frac{1}{t(t+1)} \sum_{j=1}^{N(t)}(-1)^{j-1}v_j(t) 
					\left(1- v_j(t)c_j(\alpha_n^-)\right).
			\end{equation}
		\end{enumerate}
		\end{cor}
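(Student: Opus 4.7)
The plan is to deduce both statements from Theorem~\ref{thm:PqnBehaviour} by separately analyzing the prefactor $c_n$ and the exponent $E_n := -2c_n\sum_{t=1}^{M_n-1} D_t(\alpha_n^-)/(t(t+1))$ appearing there. The prefactor is elementary: from $a_n \leq \alpha_n^+ < a_n+1$ and $\alpha_n^- \in (0,1)$ one obtains $c_n \in (1/(a_n+2),\, 1/a_n]$, hence $c_n$ lies between two positive absolute constants when the c.f.c.\ of $\alpha$ are bounded, while $c_n = \Theta(1/a_n)$ in general, with the sharper estimate $|c_n - 1/a_n| \leq 2/a_n^2$. This immediately accounts for the prefactor $1/a_n$ in part~(2) and contributes to the constants $K_1, K_2$ in part~(1), once the exponent $E_n$ is controlled.

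For part~(1), assume that the c.f.c.\ of $\alpha$ are bounded by some $B \in \N$. Then those of $\alpha_n^- = [0; a_{n-1}, \ldots, a_1]$ are bounded by $B$ uniformly in $n$, so a classical estimate (see, e.g., \cite{S92}) gives $|D_t(\beta)| = O_B(\log t)$ for every irrational $\beta$ whose c.f.c.\ do not exceed $B$, with an implied constant depending only on $B$. Combined with $c_n = O(1)$ and the convergence of $\sum_{t\geq 1}(\log t)/(t(t+1))$, this shows that $|E_n|$ is uniformly bounded in $n$, and hence $P_{q_n}(\alpha)$ lies between two positive constants $K_2$ and $K_1$ as required.

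For part~(2), I would invoke a classical Ostrowski-digit expansion of the shape
\begin{equation*}
D_t(\beta) = -\tfrac{1}{2}\sum_{j=1}^{N(t)} (-1)^{j-1}\, v_j(t)\bigl(1 - v_j(t)\, c_j(\beta)\bigr) + O(1),
\end{equation*}
which can be derived from the formulas of Ostrowski, S\'os and Brown--Shiue cited in the introduction by tracking the fractional parts $\{s\beta\}$ through the Ostrowski algorithm. Applied with $\beta = \alpha_n^-$ and summed against the weights $1/(t(t+1))$, this converts the exponent into $E_n = c_n\, Y_n(\alpha) + O(1)$; writing $c_n\, Y_n(\alpha) = (1/a_n)\,Y_n(\alpha) + (c_n - 1/a_n)\,Y_n(\alpha)$ and absorbing the bounded factor $c_n a_n = \Theta(1)$ into the $\Theta$-implied constants of the prefactor then yields the claimed asymptotic. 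The main obstacle is twofold: first, pinning down the Ostrowski-digit identity for $D_t(\beta)$ with an explicit and uniform error term; and second, verifying that the remainder $(c_n - 1/a_n)\,Y_n(\alpha)$ stays bounded independently of $n$, for which a termwise estimate must exploit the fact that a large Ostrowski digit $v_j(t)$ forces $t \geq v_j(t)\, q_{j-1}(\alpha_n^-)$, so that the weight $1/(t(t+1))$ damps the contribution of large digits sufficiently to absorb the factor $(c_n - 1/a_n) = O(1/a_n^2)$.
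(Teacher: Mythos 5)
Your part (1) and the overall skeleton of part (2) coincide with the paper's own proof: bound $c_n$ via \eqref{eq:cn} by $1/(A+2)<c_n<1/a_n\leq 1$, use $D_t(\alpha_n^-)=\calO(\log t)$ together with $\sum_t \log(t)/t^2<\infty$ for the bounded case, and for the unbounded case substitute the Brown--Shiue formula (Theorem~\ref{thm:DtFormula}) into Theorem~\ref{thm:PqnBehaviour} to extract $Y_n(\alpha)$. Two small technical remarks. First, $\alpha_n^-$ is rational, so you need the observation made in Section~\ref{sec:Prelim} that Theorem~\ref{thm:DtFormula} and the $\calO(\log t)$ bound remain valid for $\beta=\alpha_n^-$ as long as $t<q_n$; quoting the statement ``for every irrational $\beta$'' is not quite enough. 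Second, your claimed per-$t$ error $\calO(1)$ in the digit expansion of $D_t$ is too strong: the cross term $\sum_{i<j}v_iv_jq_i\Lambda_j$ can be of size $\Theta(N(t))=\Theta(\log t)$. This is harmless, since after weighting by $1/(t(t+1))$ only $\sum_t(\log t)/t^2<\infty$ is needed, which is exactly how the paper argues; but the uniform $\calO(1)$ identity you invoke is not available.

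The substantive issue is the last step of part (2). You correctly isolate that Theorem~\ref{thm:PqnBehaviour} plus Theorem~\ref{thm:DtFormula} give an exponent $c_nY_n(\alpha)+\calO(1)$, and that passing to $(1/a_n)Y_n(\alpha)$ with $n$-independent implied constants requires $(c_n-1/a_n)Y_n(\alpha)=\calO(1)$. Your proposed termwise damping cannot deliver this in general: $|1/a_n-c_n|\leq 2/(a_n(a_n+2))$ is only small when $a_n$ itself is large, whereas $Y_n(\alpha)$ is governed by the \emph{preceding} partial quotients. If, say, $a_n=1$ while $a_{n-1}$ is huge, then $\alpha_n^-\approx 1/a_{n-1}$ and the first Ostrowski block $1\leq t<a_{n-1}$ (where $t=v_1(t)$) alone contributes roughly $\log a_{n-1}$ to $Y_n(\alpha)$, while $1/a_n-c_n$ stays bounded away from zero; hence $(1/a_n-c_n)Y_n(\alpha)$ is unbounded along such $n$, and the weight $1/(t(t+1))$ does not rescue the estimate. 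You should be aware that the paper's own proof does not close this step either: it replaces $c_n$ by $1/a_n$ inside the exponential citing only $c_n=\Theta(1/a_n)$, which justifies the substitution in the prefactor but not in the exponent. What your argument (and the paper's) actually establishes is $P_{q_n}(\alpha)=\Theta\bigl(c_n\exp\bigl(c_nY_n(\alpha)\bigr)\bigr)$; the version with $1/a_n$ follows only on ranges of $n$ where $(1/a_n-c_n)Y_n(\alpha)$ remains bounded. So your instinct that this is the main obstacle is correct -- it is precisely the point the paper glosses over -- but the route you sketch for overcoming it will not succeed as stated.
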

		The assertions of Corollary~\ref{cor:PqnBehaviourTypeOfCFC} give insight in the asymptotic behaviour of $(P_{q_n}(\alpha))_{n\in\NN}$ for $\alpha$ having bounded or unbounded c.f.c. It will turn out that under certain assumptions it is even possible to achieve convergence in the presence of bounded c.f.c.

The situation changes if we consider irrationals $\alpha$ with unbounded c.f.c. which will be illustrated more detailed for the special case $\alpha=\mathrm{e}$ (Eulers number). It is well known that $\mathrm{e}=[2;1,2,1,1,4,1,1,6,1,\ldots]$ or to put it differently we have $a_{3i+1}=1$, $a_{3i+2}=2(i+1)$ for $i\geq0$ and $a_{3i}=1$ for $i\geq 1$. Especially we see that the structure of the c.f.e. of $\mathrm{e}$ still has a periodic behaviour in some sense. We can summarize this with the notation
$$ \mathrm{e} = [2;\overline{1,2n,1}]_{n=1}^{\infty}.$$
This periodic structure is reminiscent of a quadratic irrational of period $\ell=3$ and also the corresponding plot of $P_{q_n}(\alpha)$ might seduce one to think that maybe similar properties hold as in the quadratic irrational case (see Figure~\ref{fig:PlotEuler}). 
But as a consequence of the work of Lubinsky~\cite{L99}, which deals with much more general problems concerning Sudler products, it follows that 
\begin{equation}\label{eq:LimitPointsEuler}
	\lim_{i\to\infty}P_{q_{3i+2}}(\mathrm{e})=0.
\end{equation}
\begin{rem}\label{rem:LubProperty}
	The author refers to the fact that for $\alpha$ with unbounded c.f.c. we have $\lim_{i\to\infty}P_{q_{n_i}}(\alpha)=0$ if $(a_{n_i})_{i\in\NN}$ is strictly increasing.  This is not explicitly stated in \cite{L99}, but this statement is a direct consequence from more general results of \cite{L99}. This circumstance is pointed out more detailed in the survey paper \cite{GrKaNe20}.
\end{rem}

\begin{figure}[H]
	\begin{center}
		\subfloat[{Values of $P_{q_n}(\mathrm{e})$ for $n=1,\ldots,19$.}]
			{\includegraphics[scale=0.525]{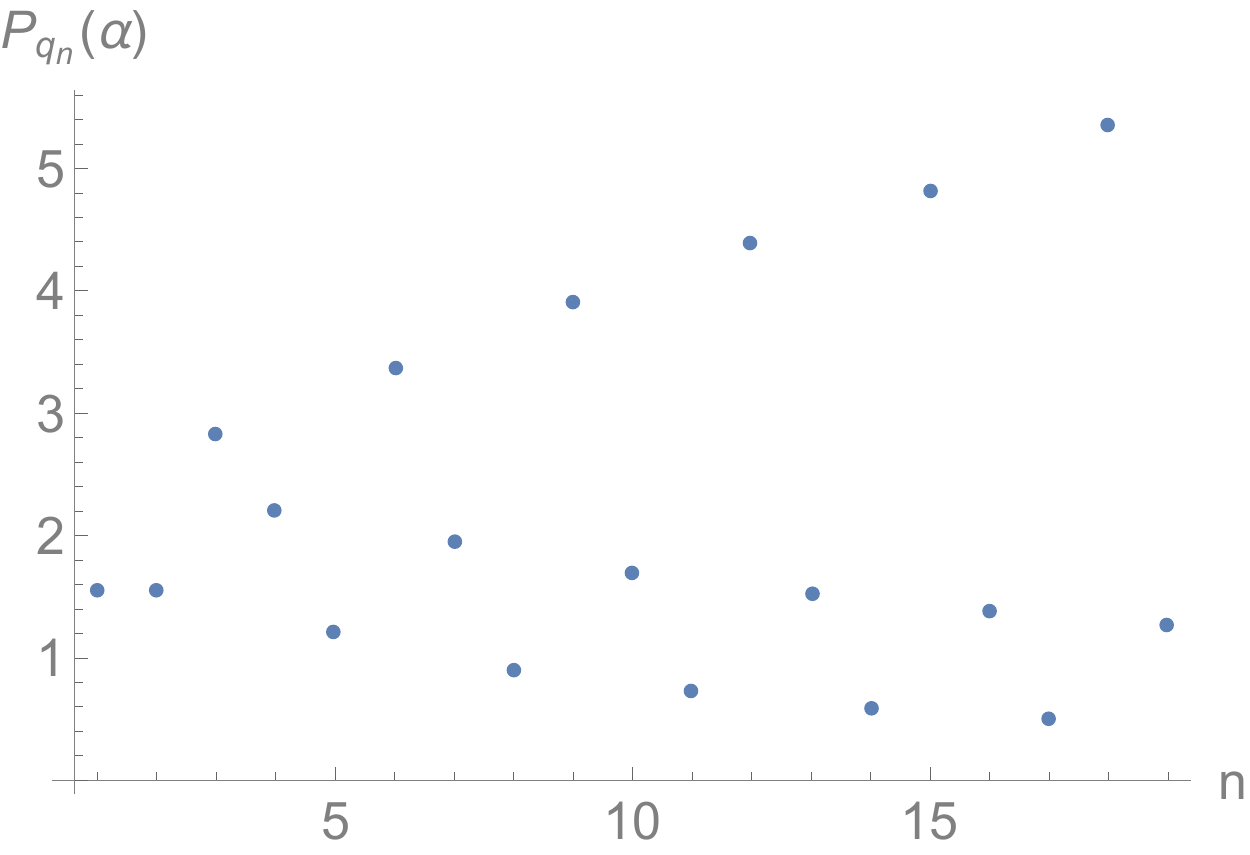}}
			\hspace{0.1cm}
		\subfloat[{Values of $P_{q_n}(\alpha)$ for $\alpha=[0;\overline{1,1,2}]$ and $n=1,\ldots,19$.}]
			{\includegraphics[scale=0.525]{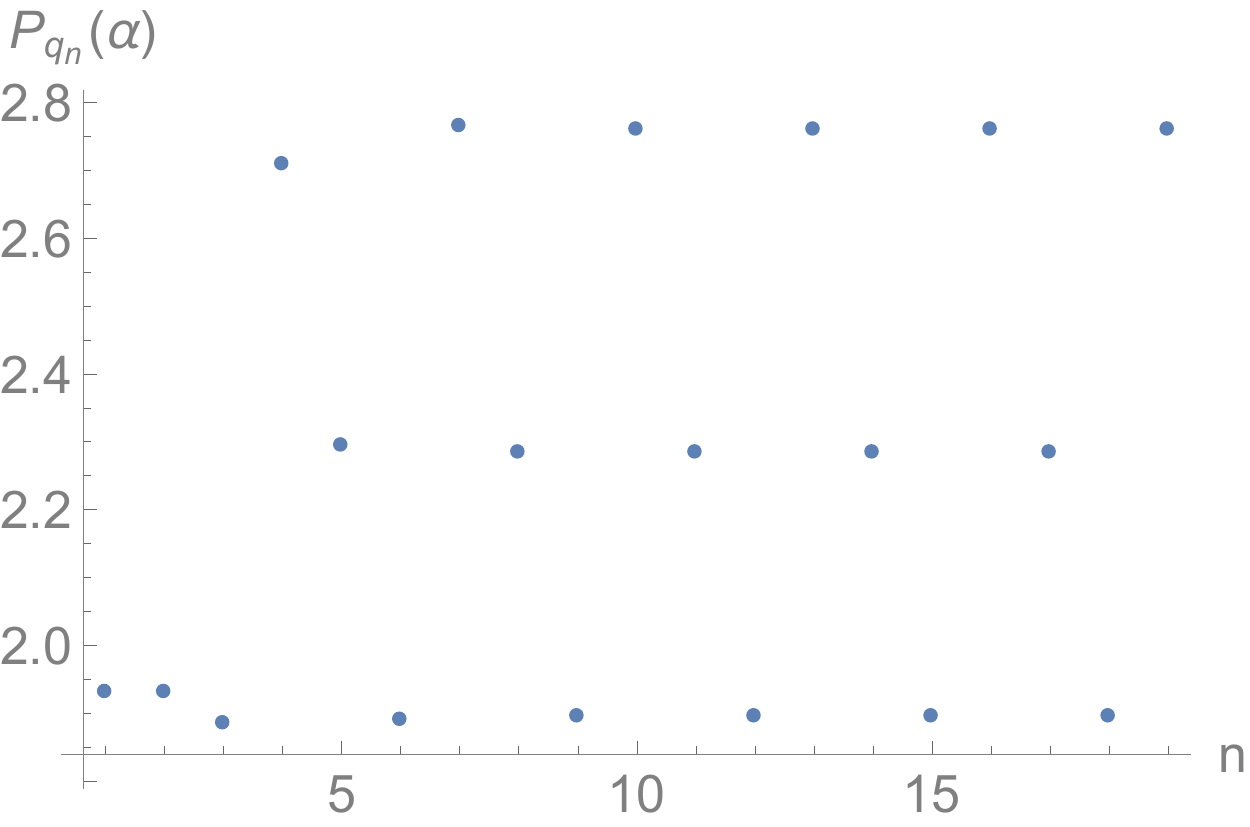}}
	\end{center}
	\caption{Comparision of $P_{q_n}(\alpha)$ for $\alpha=e$ and $\alpha$ being a quadratic irrational of period 3.}
	\label{fig:PlotEuler}
\end{figure}
The values for the limits in \eqref{eq:LimitPointsEuler} are already in contrast to the quadratic irrational case where all limit points of $P_{q_n}(\alpha)$ are strictly greater 0. The following corollary is a consequence of Theorem~\ref{thm:PqnBehaviour} and shows that the remaining subsequences for $k\in\{0,1\}$ either converge to zero or do not even converge at all. Moreover, the author would like to give a different proof for the already known limit for the case $k=2$ which is why this case is included in the statement of the corollary.
\begin{cor}\label{cor:behaviourEuler}
Let $e=[2;1,2,1,1,4,1,1,6,1,\ldots]=[2;\overline{1,2n,1}]_{n=1}^{\infty}$. Then
\begin{equation}
	\lim_{i\to\infty}P_{q_{3i+1}}(\mathrm{e})=0=\lim_{i\to\infty}P_{q_{3i+2}}(\mathrm{e}) \text{ and } \lim_{i\to\infty}P_{q_{3i}}(\mathrm{e})=\infty.
\end{equation}
\end{cor}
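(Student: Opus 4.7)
The proof is based on Theorem~\ref{thm:PqnBehaviour}, or equivalently on the $\Theta$-estimate
\[
P_{q_n}(\mathrm{e})=\Theta\!\left(\tfrac{1}{a_n}\exp\!\left(\tfrac{1}{a_n}Y_n(\mathrm{e})\right)\right)
\]
provided by Corollary~\ref{cor:PqnBehaviourTypeOfCFC}(2), which applies because $\mathrm{e}$ has unbounded c.f.c. The plan is to analyse the three residue classes $n\bmod 3$ separately, since both $a_n$ and the c.f.e.\ of $\alpha_n^-=[0;a_{n-1},\ldots,a_1]$ depend sharply on this residue.

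For $n=3i+2$ we have $a_n=2(i+1)$, so the prefactor $1/a_n=\Theta(1/i)$ already tends to zero. Using the basic Ostrowski bound $v_j(t)\le a_j(\alpha_n^-)=a_{n-j}\le 2i$, the elementary inequality $v_j(t)(1-v_j(t)c_j(\alpha_n^-))\le v_j(t)$, and the fact that the number of nonzero Ostrowski digits of $t$ is $O(\log t)$, I obtain $(1/a_n)|Y_n(\mathrm{e})|=O(1)$, so the exponential factor stays bounded and $P_{q_{3i+2}}(\mathrm{e})=\Theta(1/i)\to 0$. This reproves~\eqref{eq:LimitPointsEuler} directly from Theorem~\ref{thm:PqnBehaviour}, without appealing to the more general result of Lubinsky.

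The cases $n=3i$ and $n=3i+1$ are more delicate: here $a_n=1$, so $P_{q_n}(\mathrm{e})=\Theta(\exp Y_n(\mathrm{e}))$ and the limiting behaviour is dictated by the sign and size of $Y_n$. A direct computation gives
\begin{align*}
\alpha_{3i+1}^- &= [0;\,1,\,2i,\,1,\,1,\,2(i-1),\,1,\,1,\,\ldots],\\
\alpha_{3i}^-   &= [0;\,2i,\,1,\,1,\,2(i-1),\,1,\,1,\,\ldots],
\end{align*}
which differ only by a single leading digit. As a consequence the \emph{first} large c.f.c.\ of $\alpha_n^-$ sits at index $j=2$ when $n=3i+1$, where the alternating sign $(-1)^{j-1}=-1$, and at $j=1$ when $n=3i$, where $(-1)^{j-1}=+1$. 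Since the small-$t$ part of the Ostrowski expansion carries the largest weight $1/[t(t+1)]$, I expect this first large index to dominate $Y_n(\mathrm{e})$; the plan is to show $Y_{3i+1}(\mathrm{e})\to-\infty$ and $Y_{3i}(\mathrm{e})\to+\infty$, which then yields $P_{q_{3i+1}}(\mathrm{e})\to 0$ and $P_{q_{3i}}(\mathrm{e})\to\infty$, respectively.

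The main obstacle is the quantitative dominance argument. Fixing the first large index $j_0\in\{1,2\}$ with digit bound $M=2i$, I plan to regroup $Y_n(\mathrm{e})$ according to the value $v=v_{j_0}(t)\in\{0,1,\ldots,M-1\}$: on each such level set the sum $\sum 1/[t(t+1)]$ can be evaluated up to a small error via the Ostrowski block decomposition, and summing the quadratic weight $v(1-v/M)$ against it produces a contribution of order $\log M$ with the claimed sign. The later large indices $j>j_0$ contribute strictly smaller terms (the relevant $t$ are larger by a factor growing with the continued-fraction convergents), and all small-digit positions contribute only $O(1)$ in total. Making this Ostrowski summation-by-parts precise is the sole nontrivial ingredient; everything else is a routine specialisation of Theorem~\ref{thm:PqnBehaviour} to the c.f.e.\ $[2;\overline{1,2n,1}]_{n=1}^{\infty}$.
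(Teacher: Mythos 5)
Your reduction to Theorem~\ref{thm:PqnBehaviour} (equivalently to part (2) of Corollary~\ref{cor:PqnBehaviourTypeOfCFC}) and your treatment of the class $n=3i+2$ are sound: the crude bound $|Y_n(\mathrm{e})|\le C\,i$, obtained from $v_j(t)\le 2i$, $N(t)=\calO(\log t)$ and $\sum_t \log(t)/t^2<\infty$, keeps the exponential factor bounded, so the prefactor $c_n=\Theta(1/i)$ forces $P_{q_{3i+2}}(\mathrm{e})\to 0$; this is even a little simpler than the paper's estimate, which tracks an extra logarithmic term in the exponent. You have also identified the correct mechanism for the other two classes: the first large partial quotient of $\alpha_n^-$ sits at index $j_0=1$ when $n=3i$ and at $j_0=2$ when $n=3i+1$, and its parity decides the sign of the dominant logarithmic contribution to $Y_n$.

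However, for $k\in\{0,1\}$ your argument stops exactly where the real work begins: the assertions $Y_{3i}(\mathrm{e})\to+\infty$ and $Y_{3i+1}(\mathrm{e})\to-\infty$ are only announced (``I expect'', ``the plan is to show''), and the quantitative dominance estimate that you yourself call the sole nontrivial ingredient is not carried out; since these two divergences are the entire content of the cases $k=0,1$, this is a genuine gap. It can be closed, and more directly than by your proposed level-set regrouping: split the sum defining $Y_n$ at the denominators $q_2(\alpha_n^-)$ and $q_5(\alpha_n^-)$ for $k=0$ (resp.\ $q_3(\alpha_n^-)$ and $q_6(\alpha_n^-)$ for $k=1$). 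On the initial range $t<q_2(\alpha_n^-)=2i$ (resp.\ $t<q_3(\alpha_n^-)=2i+1$) the Ostrowski expansion collapses to the single digit $t=v_{j_0}(t)$, so your level sets are singletons and the initial block equals $\pm\sum_{t}\frac{1-t\,c_{j_0}(\alpha_n^-)}{t+1}=\pm\bigl(\log(2i)+\calO(1)\bigr)$, using $c_{j_0}(\alpha_n^-)\le 1/(2i)$. The two remaining blocks are $\calO(1)$ uniformly in $i$, because the weight $\sum_{t\ge q}1/(t(t+1))\le 1/q$ compensates the digit sums: on the middle block $\sum_j v_j(t)=\calO(i)$ while $q\approx 2i$, and on the final block $\sum_j v_j(t)\le\sum_{j<n}a_j=\calO(i^2)$ while $q\approx 8i^2$. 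Combined with $1/3\le c_n\le 1$ this gives $P_{q_{3i}}(\mathrm{e})\ge C_1\sqrt[3]{i}$ and $P_{q_{3i+1}}(\mathrm{e})\le C_2\, i^{-1/3}$, which is exactly how the paper concludes. Until an estimate of this kind is supplied, your proposal proves only the case $k=2$.
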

Another immediate consequence of Corollary~\ref{cor:PqnBehaviourTypeOfCFC} and the property stated in Remark~\ref{rem:LubProperty} is the following.
\begin{cor}\label{cor:ExistenceConvergentSubsequence}
	For every irrational $\alpha \in (0,1)$ the sequence $(P_{q_n}(\alpha))_{n\in\NN}$ has a convergent subsequence.
\end{cor}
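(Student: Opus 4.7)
The plan is to split into two cases depending on whether $\alpha$ has bounded or unbounded continued fraction coefficients, with each case handled by one of the two tools mentioned just before the corollary.

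First, suppose $\alpha$ has bounded c.f.c. Then Corollary~\ref{cor:PqnBehaviourTypeOfCFC}(1) provides a constant $K_1$ such that $P_{q_n}(\alpha) \leq K_1$ for all $n$, while trivially $P_{q_n}(\alpha) \geq 0$. Thus $(P_{q_n}(\alpha))_{n\in\NN}$ is a bounded sequence in $[0,K_1]$, and the Bolzano--Weierstrass theorem immediately yields a convergent subsequence.

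Second, suppose $\alpha$ has unbounded c.f.c., i.e.\ $(a_n)_{n\in\NN}$ is unbounded. Then I can extract a (strictly) increasing subsequence $(a_{n_i})_{i\in\NN}$ of the c.f.c.\ by the usual greedy selection: pick $n_1=1$ and, having chosen $n_1<\cdots<n_i$, take $n_{i+1}$ to be the smallest index $m>n_i$ with $a_m>a_{n_i}$ (such an index exists precisely because the c.f.c.\ are unbounded). By the property highlighted in Remark~\ref{rem:LubProperty}, this strictly increasing behaviour of the c.f.c.\ along $(n_i)_{i\in\NN}$ forces
\begin{equation*}
\lim_{i\to\infty} P_{q_{n_i}}(\alpha)=0,
\end{equation*}
so $(P_{q_{n_i}}(\alpha))_{i\in\NN}$ is the desired convergent subsequence.

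There is essentially no obstacle here: the corollary is a direct consequence of the dichotomy between bounded and unbounded c.f.c.\ combined with two already stated facts. The only subtlety worth noting is that in the bounded case Corollary~\ref{cor:PqnBehaviourTypeOfCFC}(1) permits $K_2=0$, so the limit obtained from Bolzano--Weierstrass might be zero; this does not affect the statement, since we only claim existence of a convergent subsequence, not that the limit is positive.
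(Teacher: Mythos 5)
Your proposal is correct and follows exactly the route the paper intends: the corollary is stated there as an immediate consequence of Corollary~\ref{cor:PqnBehaviourTypeOfCFC} (boundedness plus Bolzano--Weierstrass in the bounded c.f.c.\ case) and Remark~\ref{rem:LubProperty} (extracting a strictly increasing subsequence of the c.f.c.\ forcing $P_{q_{n_i}}(\alpha)\to 0$ in the unbounded case), which is precisely your two-case argument.
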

In the presence of bounded c.f.c. it is even possible to give sufficient conditions for the convergence of a subsequence of $(P_{q_n}(\alpha))_{n\in\NN}$. The following theorem can be seen as a generalization of Theorem~\ref{thm:QuadIrrConv}.
\begin{thm}\label{thm:ConvBoundedCFC}
	Let $\alpha\in(0,1)$ be irrational with bounded c.f.c. $(a_n)_{n\in\N}$. Let $(n_i)_{i\in\N}$ be a strictly increasing sequence of positive integers. If additionally both of the limits
		\begin{equation}\label{eq:SuffCondConv}
				\begin{split}
				&\lim_{i\to\infty}  \alpha_{n_i}^+=\lim_{i\to\infty}[a_{n_i}; a_{n_i+1},a_{n_i+2},\ldots];\\
				&\lim_{i\to\infty} \alpha_{n_i}^-=\lim_{i\to\infty}[0;a_{n_i-1},a_{n_i-2},\ldots,a_1]
				\end{split}
		\end{equation}
		exist then there exists a constant $C>0$ such that
		\begin{equation}
			\lim_{i\to\infty}P_{q_{n_i}}(\alpha)=C.
		\end{equation}
\end{thm}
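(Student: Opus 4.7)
The plan is to combine Theorem~\ref{thm:PqnBehaviour} with two convergence statements for the quantities $c_n$ and $S_n := \sum_{t=1}^{M_n-1} D_t(\alpha_n^-)/(t(t+1))$. As a preliminary, I would revisit the proof of Theorem~\ref{thm:PqnBehaviour} to upgrade the $\Theta$-bound into a genuine asymptotic equivalence
\[
    P_{q_n}(\alpha) = K \cdot c_n \exp\bigl(-2 c_n S_n\bigr)\bigl(1+o(1)\bigr) \qquad (n\to\infty),
\]
for some universal constant $K>0$; this refinement should fall out of the same analysis, with the $\Theta$-formulation being a convenient weakening for the statements made earlier in the paper. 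Granted this, the problem reduces to establishing the existence of the two limits $\lim_{i\to\infty} c_{n_i}$ and $\lim_{i\to\infty} S_{n_i}$.

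Convergence of $c_{n_i}$ is immediate from \eqref{eq:SuffCondConv}: since $\alpha_{n_i}^+ \to \alpha^{+*} \geq 1$ and $\alpha_{n_i}^- \to \alpha^{-*} \geq 0$, the defining formula for $c_n$ yields $c_{n_i} \to c^* := 1/(\alpha^{+*}+\alpha^{-*}) > 0$. The main work is in proving $S_{n_i} \to S^* := \sum_{t=1}^{\infty} D_t(\alpha^{-*})/(t(t+1))$, which I would do via a truncation argument at some threshold $T$. Note that $\alpha^{-*}$ is irrational (its c.f.e. is infinite, since the prescribed partial c.f.c. of $\alpha_{n_i}^-$ stabilize as $i$ grows), so for every fixed $t$ the map $\beta \mapsto D_t(\beta)$ is continuous at $\beta=\alpha^{-*}$ (because $\beta s \notin \ZZ$ for any $s\geq 1$); hence $\sum_{t=1}^T D_t(\alpha_{n_i}^-)/(t(t+1))$ converges to $\sum_{t=1}^T D_t(\alpha^{-*})/(t(t+1))$ as $i\to\infty$. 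To control the tails, one needs the classical estimate $|D_t(\beta)| = O_A(\log t)$ valid for every $\beta$ whose c.f.c. are bounded by $A$. Since the c.f.c. of all $\alpha_{n_i}^-$ and of $\alpha^{-*}$ come from the same finite set of c.f.c. of $\alpha$, the tails $|\sum_{t>T} D_t(\beta)/(t(t+1))|$ are $O(\log T / T)$ uniformly in $\beta$, and a standard $\varepsilon/3$-argument then delivers $S_{n_i}\to S^*$. Combining everything, $P_{q_{n_i}}(\alpha) \to K c^* \exp(-2 c^* S^*) =: C > 0$.

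The principal obstacle is the preliminary sharpening of Theorem~\ref{thm:PqnBehaviour} from a $\Theta$-bound into a true asymptotic equivalence, which requires careful tracking of the multiplicative error terms in its proof; without this upgrade, the $\Theta$-bound alone only gives boundedness of $(P_{q_{n_i}}(\alpha))_i$ between positive constants, not convergence. The subsidiary step of bounding $D_t(\beta)$ uniformly in the bounded-c.f.c. regime is standard, but can also be derived transparently via the Ostrowski-expansion representation underlying Corollary~\ref{cor:PqnBehaviourTypeOfCFC}: writing $t=\sum_{j=1}^{N(t)} v_j(t) q_j(\beta)$ with $N(t) = O(\log t)$ and digits $v_j(t)$ bounded in terms of the c.f.c. of $\beta$, summing the digit contributions to $D_t(\beta)$ directly yields the required logarithmic bound.
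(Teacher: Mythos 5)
There is a genuine gap, and it sits exactly where you placed your ``principal obstacle'': the proposed upgrade of Theorem~\ref{thm:PqnBehaviour} to an asymptotic equivalence $P_{q_n}(\alpha)=K\,c_n\exp(-2c_nS_n)(1+o(1))$ with a single constant $K$ does not ``fall out of the same analysis'' --- it is false in general. In the decomposition $P_{q_n}=A_nB_nC_n$ of Lemma~\ref{lem:StructurePqn}, only $A_n=2\pi c_n(1+\calO(\Lambda_n^2))$ has the form you need. The factor $C_n$ in \eqref{eq:CnPrecise} and the double sum $\sum_{t\le\tau_n}\sum_{j=2}^{\tau_n}\frac1j\bigl(c_n\xi_{nt}/t\bigr)^j$ in \eqref{eq:logBn} are $\Theta(1)$ but genuinely $n$-dependent: they are built from $c_n$ and $\xi_{nt}=\{t\alpha_n^-\}-\tfrac12$, and along a subsequence with $\alpha_{n_i}^\pm\to\alpha_\infty^\pm$ they converge to $\prod_{t\ge1}\bigl(1-u_\infty(t)^{-2}\bigr)$ and to a constant $L_2$, both of which depend on $(\alpha_\infty^+,\alpha_\infty^-)$. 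So the quantity hidden in the $\Theta$ of Theorem~\ref{thm:PqnBehaviour} is not $K(1+o(1))$ for any constant $K$ (already for a quadratic irrational of period $\ell\ge2$ the ratio $P_{q_n}/\bigl(c_n\exp(-2c_nS_n)\bigr)$ has, in general, different limits along different residue classes), and for a general $\alpha$ with bounded c.f.c. it need not converge at all. Consequently, reducing the theorem to the convergence of $c_{n_i}$ and $S_{n_i}$ alone cannot work: proving that the $\Theta$-hidden factors converge along $(n_i)$ is precisely the main content of the theorem, and it requires using the hypotheses \eqref{eq:SuffCondConv} on the quantities $\xi_{n_i t}$ themselves --- which is what the paper does, factor by factor, with truncation parameters $\kappa_n,\tau_n$ chosen in terms of $f_n^\pm=|\alpha_n^\pm-\alpha_\infty^\pm|$.

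The parts you do carry out are sound and partly even cleaner than the paper's corresponding steps: the convergence $c_{n_i}\to1/(\alpha_\infty^++\alpha_\infty^-)$ is immediate, and your $\varepsilon/3$-argument for $S_{n_i}\to\sum_{t\ge1}D_t(\alpha_\infty^-)/(t(t+1))$ (continuity of $\beta\mapsto D_t(\beta)$ at the irrational limit $\alpha_\infty^-$ for each fixed $t$, plus the uniform bound $D_t(\beta)=\calO(\log t)$ for bounded partial quotients via the Ostrowski expansion) replaces the paper's more delicate truncation at $\tau_n$ with the estimate $\{s\alpha_{n}^-\}=\{s\alpha_\infty^-\}+s\,\calO(f_n^-)$. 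But these handle only $A_n$ and one of the two constituents of $\log B_n$; to complete the proof you would still have to show, under \eqref{eq:SuffCondConv}, the convergence of $C_{n_i}$ and of the $j\ge2$ double sum in \eqref{eq:logBn}, which is essentially the remaining two thirds of the paper's argument.
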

Observe that the convergence result for quadratic irrationals (Theorem~\ref{thm:QuadIrrConv}) can be fully recovered from Theorem~\ref{thm:ConvBoundedCFC} in the following way. Let $\alpha=[0;a_1,a_2\ldots]$ be a purely periodic quadratic irrational of period $\ell$ i.e. $a_n=a_{n\bmod{\ell}}$ (or $\alpha=[0;\overline{a_1,\ldots,a_\ell}]$). Then due to the periodic structure of the c.f.e. of $\alpha$ and with the choice $n^{(k)}_i= \ell i + k$ for $ k\in\{0,\ldots,\ell-1\}$ the following relations can be easily checked. (In what follows we write for simplicity $a_n$ instead of $a_{n\bmod{\ell}}$ and $a_{\ell}$ instead of $a_0$.)
\begin{align*}
	\lim_{i\to\infty}\alpha_{n^{(k)}_i}^-& =\lim_{i\to\infty} [0;a_{\ell i +k -1},a_{\ell i+k-2},\ldots,a_1]\\
	&=\lim_{i\to\infty}[0;a_{\ell i +k-1},\ldots,a_{\ell i+1},a_{\ell i},\ldots,a_1]\\
	&= \lim_{i \to \infty}[0;\underbrace{a_{k-1},\ldots,a_1}_{\text{preperiod}},\underbrace{a_{\ell},\ldots,a_1,\ldots ,a_{\ell},\ldots,a_1}_{i \text{ times the period } a_{\ell},\ldots,a_1}]\\
	&= [0;a_{k-1},\ldots,a_1,\overline{a_{\ell},\ldots,a_1}]
\end{align*}
and
\begin{align*}
	\lim_{i\to\infty}\alpha_{n^{(k)}_i}^+ &= \lim_{i\to\infty}[a_{\ell i +k}; a_{\ell i +k+1},a_{\ell i +k+2},\ldots] \\
	&=\lim_{i\to \infty} [a_k; \underbrace{a_{k+1},a_{k+2},\ldots,a_{\ell},a_1,\ldots,a_k}_{\text{period of length } \ell}, a_{k+1},a_{k+2},\ldots]\\
	&= [a_k;\overline{a_{k+1},\ldots a_{\ell},a_1,\ldots, a_{k-1},a_k}].
\end{align*}
Thus, Theorem~\ref{thm:ConvBoundedCFC} implies the convergence of $(P_{q_{\ell i +k}}(\alpha))_{i\in\NN}$ for each $k \in\{0,\ldots,\ell-1\}$ and $\alpha=[0;\overline{a_1\ldots,a_{\ell}}]$.

\begin{rem}
	The more general version of Theorem~\ref{thm:QuadIrrConv} described in Remark~\ref{rem:genVersionQuadIrrConv} can be recovered in a completely analogous way.
\end{rem}

Let us continue by having a more detailed look at the conditions \eqref{eq:SuffCondConv} stated in Theorem~\ref{thm:ConvBoundedCFC} and examine some examples for irrationals $\alpha$ with bounded c.f.c. satisfying these conditions.\\
Observe that a sequence $(\beta_i)_{i\in\NN}$ with $\beta_i=[b_0^{(i)};b^{(i)}_1,b_2^{(i)},\ldots]$ converges if and only if all the sequences which describe a single 
continued fraction coefficient (for $j\in\NN_0$ the $j$th coefficient of each $\beta_i$ is described by $(b_j^{(i)})_{i\in\NN}$) are constant from some point on. More precisely, the following are equivalent:
\begin{enumerate}
	\item $\lim\limits_{n\to\infty}[b_0^{(n)};b^{(n)}_1,b_2^{(n)},\ldots]$ exists;
	\item $\forall j \in \NN_0\,\exists N_j \in\NN: \lim\limits_{i\to\infty}b_j^{(i)}=N_j$ 
\end{enumerate}
This means that the sufficient conditions of Theorem~\ref{thm:ConvBoundedCFC} are fulfilled if and only if for all $j\in\NN_0$ there exist positive integers $N_j^+$ and $N_j^-$ such that $\lim_{i\to\infty}a_{n_i+j}=N_j^+$ and $\lim_{i\to\infty}a_{n_i-j}=N_j^-$. Let us have a look at some simple examples fulfilling these properties:
\begin{enumerate}
	\item Probably the simplest example which is not a quadratic irrational is the irrational
	\begin{equation}\label{eq:SimpleAlpha}
		\alpha=[0;1,2,1,1,2,1,1,1,2,1,1,1,1,2,\ldots].
		\end{equation}
	If we choose for example the sequence $(n_i)_{i\in\NN}$ such that $n_1=2$ and $n_{i}=n_{i-1}+i+1$ for $i\geq2$ then $(n_i)_{i\in\NN}=(2,5,9,14,\ldots)$ reflects the positions of the twos in the c.f.e. of $\alpha$. By this choice we get that for all $j\in\NN$ $\lim_{i\to\infty}a_{n_i+j}=1$ and $\lim_{i\to\infty}a_{n_i}=2$. In other words
	\begin{align*}
		&\lim_{i\to\infty}\alpha_{n_i}^+ = [2;\overline{1}]=2+\varphi;\\
		&\lim_{i\to\infty}\alpha_{n_i}^-=[0;\overline{1}]=\varphi,
	\end{align*}
	where $\varphi=(\sqrt{5}-1)/2$ denotes the fractional part of the golden ratio. Therefore we get by Theorem~\ref{thm:ConvBoundedCFC} that $\lim_{i\to\infty} P_{q_{n_i}}(\alpha)$ exists and is strictly greater zero. (This is also visualized in Figure~\ref{fig:simpleAlpha}.)
	Observe that we still end up with convergence of $\alpha_{n_i}^+$ and $\alpha_{n_i}^-$ if we change the starting value $n_1$ to any other natural number. 

	\begin{figure}[H]
\begin{center}
		\subfloat[{$P_{q_n}(\alpha)$ for $\alpha=[0;1,2,1,1,2,\ldots]$ and\\ $1\leq n\leq 30$.}]
			{\includegraphics[scale=0.525]{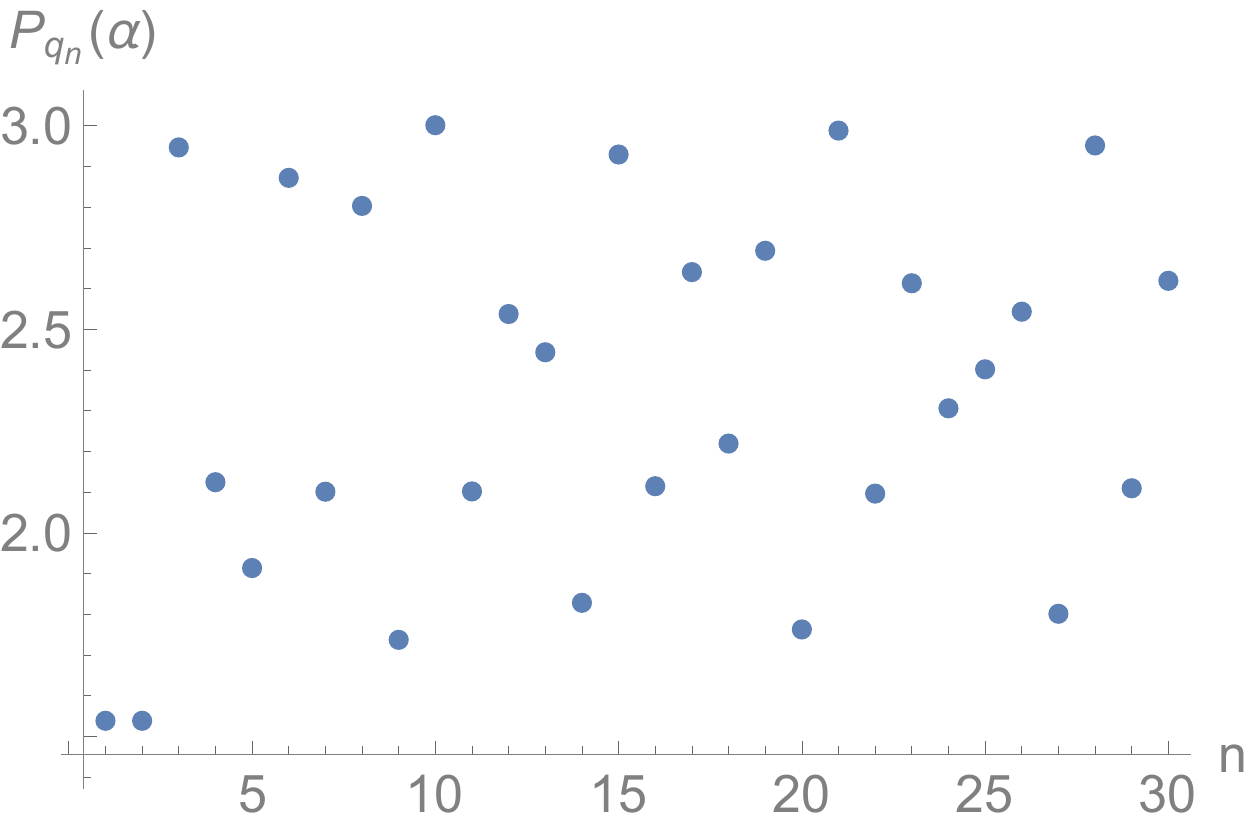}}
		\hspace{0.1cm}
		\subfloat[{$P_{q_{n_i}}(\alpha)$ for $\alpha$ given as in \eqref{eq:SimpleAlpha} and $n_i=n_{i-1} + i+1$, $n_1=1$ (dots), $n_1=2$ (triangles) and $n_1=3$ (stars).}]
			{\includegraphics[scale=0.525]{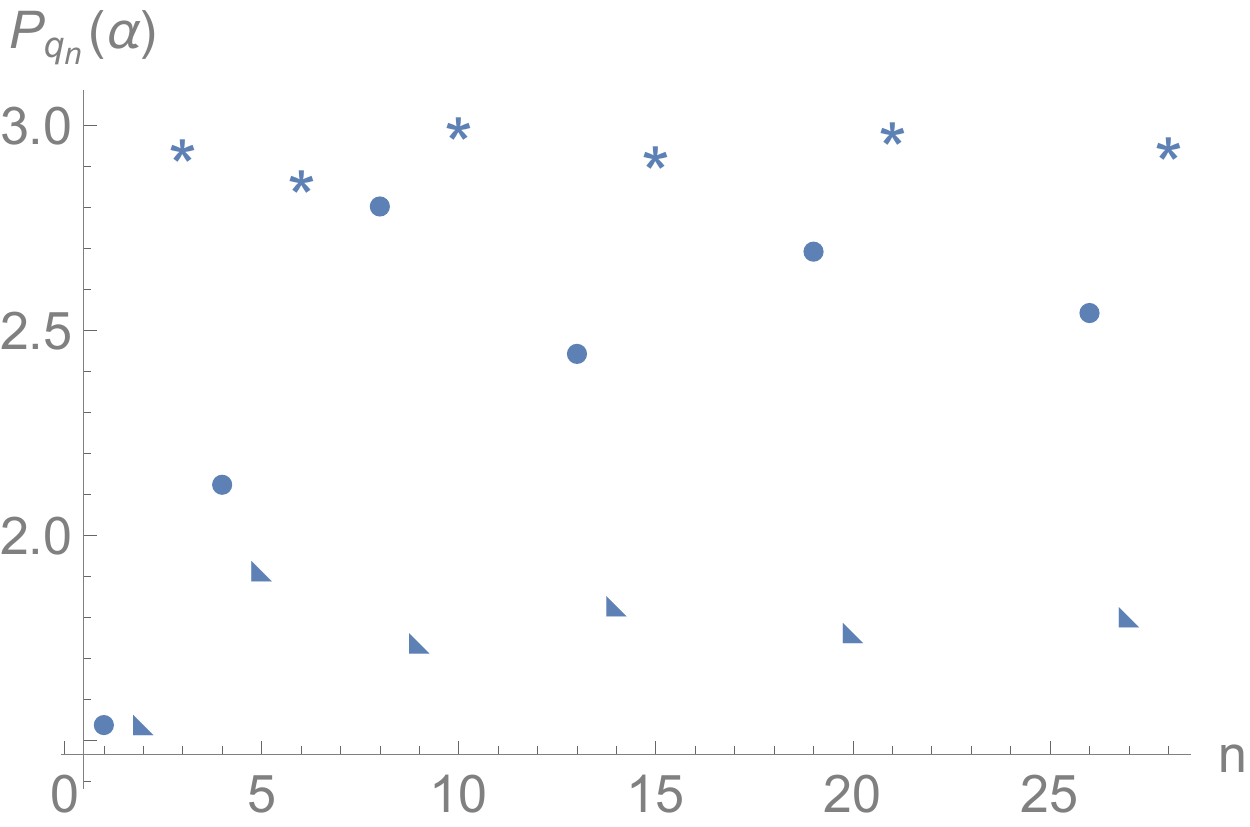}}
\end{center}
	\caption{Plots of $P_{q_n}(\alpha)$ for $\alpha$ given in \eqref{eq:SimpleAlpha}.}
	\label{fig:simpleAlpha}
\end{figure}

\item Let $(t_n)_{n\geq0}$ be the Thue-Morse sequence on the alphabet $\{a,b\}\subset \NN$ i.e. $t_n=a$ ($t_n=b$) if the sum of binary digits of $n$ is even (odd respectively). Consider the irrational 
	\begin{align*}
		\beta&=[0;t_0,t_1,t_2,\ldots]\\
		&=[0;a,b,b,a,b,a,a,b,b,a,a,b,a,b,b,a,b,a,a,b,a,b,b,a,\ldots].
	\end{align*}
	Then for example the subsequence $(n_i)_{i\in\NN}$ with $n_i=2^{2i}$ fulfills the desired properties. In order to see this let us identify a continued fraction expansion with the word consisting of the corresponding continued fraction coefficients (e.g. $[0;a,b,a]$ is identified with the word $aba$ over the alphabet $\{a,b\}$).
	Now we can describe the c.f.e. of $\alpha$  with an infinite word over the alphabet $A=\{a,b\}$, which is recursively defined. For $n \in\NN$ let $w_{n} = w_{n-1}\circ \overline{w_{n-1}}$ and $w_0=a$. Here for $w \in A^{\infty}$ the word $\overline{w}$ is obtained by replacing each $a$ in $w$ with $b$ and vice versa. Now we have
	$$[0;t_1,\ldots,t_{2^i}] = w_{i} . $$
	Further we see that for $i \in\NN$
	 \begin{equation}\label{eq:InductionStep}
		[0;t_1,\ldots,t_{2n_{i}}] = w_{2i+1} = w_{2i}\overline{w_{2i}} = w_{2i-1}\circ\overline{w_{2i-1}}\circ\overline{w_{2i-1}}\circ w_{2i-1},
		\end{equation}
	where we used that for $w\in A^{\infty}$ we have $\overline{\overline{w}}=w$. Note that $|w_n| = 2^n$ and therefore $|w_{2i-1}\circ \overline{w_{2i-1}}| = 2^{2i} =n_i$. Now the definition of $w_n$ together with \eqref{eq:InductionStep} implies that for each $j\in\NN_0$ we have $\lim_{i\to\infty}t_{n_i+j}$ and $\lim_{i\to\infty}t_{n_i-j}$ exist. Thus we can apply Theorem~\ref{thm:ConvBoundedCFC} and it follows that $\lim_{i\to\infty}P_{q_{n_i}}(\beta)$ exist and is strictly greater zero.
\end{enumerate}

The rest of this article is structured as follows. We give a short summary of the most important results on continued fractions and Ostrowski representations in Section~\ref{sec:Prelim} and continue with a short description of the proof strategy for Theorem~\ref{thm:PqnBehaviour} and Theorem~\ref{thm:ConvBoundedCFC} in Section~\ref{sec:ProofStrategy}. The main part of this article is Section~\ref{sec:ProofLemmata}, which contains the essential steps (Lemma~\ref{lem:decomp} and Lemma~\ref{lem:StructurePqn}) for the proofs of Theorem~\ref{thm:PqnBehaviour} and Theorem~\ref{thm:ConvBoundedCFC}. Last but not least in Section~\ref{sec:ProofMainThm} we will state the actual proofs of Theorem~\ref{thm:PqnBehaviour} and Theorem~\ref{thm:ConvBoundedCFC}. The proofs of corollaries \ref{cor:PqnBehaviourTypeOfCFC} and \ref{cor:behaviourEuler} are stated in this section as well.


	\section{Preliminaries}~\label{sec:Prelim}
	
	Throughout the article we make use of the following common notations. For some functions $f,g:\RR\rightarrow\RR$  and $a\in\RR\cup\{\infty,-\infty\}$ we write
	\begin{enumerate}
		\item $f(x)=\calO(g(x))$ for $x\to a$ if there exists a constant $C$ independent of $x$ such that if $x$ is sufficiently close to $a$ then $|f(x)|\leq C|g(x)|$.
		
		\item $f(x)=\Theta(g(x))$ for $x \to a$ if there exist constants $c,C>0$ independent of $x$ such that if $x$ is sufficiently close to $a$ then $c|g(x)| \leq |f(x)|\leq C|g(x)|$.
		
		\item $f(x)=\smallCalO(g(x))$ for $x \to a$ if $\lim_{x\to a}|f(x)/g(x)| = 0$.
		
	\end{enumerate}
	
	Further we state a helpful Lemma which will be used at several points later on.
	
	\begin{lem}[{\cite[Lemma 4.3]{VM16}}]\label{lem:ProductSumLemma}
	For $n\geq 2$ and real numbers $a_t$, $t =1,2, \ldots , n$, satisfying $|a_t|<1$ and $A:=\sum_{t=1}^{n}|a_t|<1$, we have
	$$1- A<\prod_{t=1}^n(1+a_t) <\frac{1}{1-A}.$$
\end{lem}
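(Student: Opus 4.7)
The key observation is that both inequalities reduce to a single form of the Weierstrass product inequality, namely $\prod_{t=1}^n(1-x_t)\ge 1-\sum_{t=1}^n x_t$ for $x_t\in[0,1)$. I would establish this first by induction on $n$: the case $n=2$ is $(1-x_1)(1-x_2)=1-x_1-x_2+x_1x_2\ge 1-x_1-x_2$, and for the inductive step, provided $1-\sum_{t<n}x_t\ge 0$ (which we may assume, otherwise the inequality is trivial as $\prod(1-x_t)\ge 0$), multiplying by $1-x_n\in(0,1]$ preserves the bound and yields $1-\sum_{t\le n}x_t+x_n\sum_{t<n}x_t\ge 1-\sum_{t\le n}x_t$. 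Under the hypothesis $A<1$ of the lemma, applying this with $x_t=|a_t|$ gives $\prod_{t=1}^n(1-|a_t|)\ge 1-A>0$, with strict inequality whenever any $a_t\ne 0$.

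For the lower bound on $\prod(1+a_t)$, I would use that $|a_t|<1$ implies $1+a_t\ge 1-|a_t|>0$ for each $t$, so
\[
\prod_{t=1}^n(1+a_t)\;\ge\;\prod_{t=1}^n(1-|a_t|)\;\ge\;1-A.
\]
The strictness of the final inequality follows from the $n\ge 2$ hypothesis together with the Weierstrass step above (if every $a_t=0$ then $A=0$ and the claim $1=\prod(1+a_t)>1-A=1$ would fail, so the statement is really $\ge$; the strict version in the paper holds as soon as at least one $a_t$ is nonzero, which is the only case of interest).

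For the upper bound, I would exploit the elementary identity $(1+|a_t|)(1-|a_t|)=1-a_t^2\le 1$, which gives $1+a_t\le 1+|a_t|\le \frac{1}{1-|a_t|}$. Taking the product,
\[
\prod_{t=1}^n(1+a_t)\;\le\;\prod_{t=1}^n\frac{1}{1-|a_t|}\;=\;\frac{1}{\prod_{t=1}^n(1-|a_t|)}\;\le\;\frac{1}{1-A},
\]
where the last step reuses the Weierstrass inequality applied to $|a_t|$.

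There is no real obstacle here; the only subtle point is ensuring the Weierstrass step is applied with nonnegative inputs so that the signs work out, which is exactly why the hypothesis $A<1$ is imposed, and making sure the induction is set up so that the intermediate partial sums $1-\sum_{t\le k}|a_t|$ remain nonnegative — both handled automatically by the assumption $A=\sum|a_t|<1$.
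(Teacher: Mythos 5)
There is nothing in the paper to compare against here: Lemma~\ref{lem:ProductSumLemma} is imported from \cite[Lemma~4.3]{VM16} as a black box and no proof is given in this article, so your argument can only be judged on its own. On the substance it is correct and standard: the induction establishing the Weierstrass bound $\prod_{t}(1-x_t)\ge 1-\sum_t x_t$ for $x_t\in[0,1)$, the reduction $1+a_t\ge 1-|a_t|>0$ for the lower bound, and $1+a_t\le 1+|a_t|\le (1-|a_t|)^{-1}$ followed by the same Weierstrass bound for the upper bound all go through, and you correctly observe that the strict inequalities as printed fail when all $a_t=0$. The one inaccuracy is your claim that the strict version holds ``as soon as at least one $a_t\ne 0$'': this is true for the right-hand inequality (since $1+|a_t|<(1-|a_t|)^{-1}$ whenever $a_t\ne0$), but false for the left-hand one. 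For example, with $n=2$, $a_1=-1/2$, $a_2=0$ one has $\prod_t(1+a_t)=1/2=1-A$, so the lower bound is attained with equality. Strictness on the left in fact requires either at least two nonzero $a_t$ (making the Weierstrass step strict) or some $a_t>0$ (making $\prod_t(1+a_t)>\prod_t(1-|a_t|)$ strict). This edge case is immaterial for how the lemma is used in the paper, where the relevant sums have many nonzero terms or only the non-strict bound is needed, but since you raised the strictness issue explicitly, the characterization you give should be corrected.
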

	
	Let us continue by summarizing some basic knowledge and notations concerning the theory of continued fractions.
	
	\subsection{Continued fractions}
	The following notations and statements will be useful throughout the rest of this paper. In general, for any irrational, real $\alpha \in (0,1)$ whose continued fraction expansion is given by 
$$\alpha=[0; a_1, a_2, \ldots ],$$ 
we denote its $n$th convergent by $p_n/q_n$. The numerators $p_n$ and denominators $q_n$ are given recursively by 
\begin{equation*}
\begin{aligned}
q_0=0, \quad &q_1=1  \quad &q_{n+1}=a_{n}q_n+q_{n-1} ;\\
p_0=1, \quad &p_1=0  \quad &p_{n+1}=a_{n}p_n+p_{n-1} .
\end{aligned}
\end{equation*}
Note that the indexing of $p_n$ and $q_n$ is offset by one compared to what is normally seen in the literature. As a consequence, the $n$th convergent $p_n/q_n$ is smaller than $\alpha$ for every odd value of $n$, and greater than $\alpha$ for every even value of $n$. It follows readily from the recurrences above that 
\begin{equation}
\label{eq:pq}
p_nq_{n+1}-p_{n+1}q_n = (-1)^n,
\end{equation}
and as a consequence of this identity we have 
\begin{equation}
	p_nq_{n-1} = (-1)^n \pmod{q_n}.
\end{equation}
Another important consequence of \eqref{eq:pq} is the error bound
\begin{equation}
\label{eq:stderror}
\frac{1}{q_n(q_{n+1}+q_n)}\leq \left| \alpha - \frac{p_n}{q_n} \right| < \frac{1}{q_{n+1}q_n}
\end{equation}
for the $n$th convergent of $\alpha$.

Let us continue with stating two useful relations involving $\alpha_n^+$ and $\alpha_n^-$ defined as in \eqref{eq:defAlphPlusAlphMinus}. For $\alpha=[0;a_1,a_2,\ldots]$ it can be easily checked that
\begin{equation}\label{eq:AlphMinusFrac}
	\alpha_n^- = [0;a_{n-1},a_{n-2},\ldots,a_1] = \frac{q_{n-1}(\alpha)}{q_{n}(\alpha)}
\end{equation}
and it is well known that
\begin{equation}\label{eq:AlphaRewritten}
	\alpha =  \frac{p_n(\alpha) \alpha^+_n+p_{n-1}(\alpha)}{q_n(\alpha)\alpha_n^+ + q_{n-1}(\alpha)}.
\end{equation}

Further, we introduce a quantity which will appear frequently in the rest of this article
\begin{equation}\label{eq:Lambdan}
	\Lambda_n(\alpha) := q_n \alpha - p_n.
\end{equation}
Let us therefore summarize some well known properties of this object in the following Lemma.
\begin{lem}\label{lem:PropLambda}
	Let $\alpha \in (0,1)$ with best approximation denominators $(q_n)_{n\in\NN_0}$ and for $n\in\NN$ let $\Lambda_n(\alpha)$ be as in \eqref{eq:Lambdan}. Then it follows that
	\begin{enumerate}[label=(\alph*)]
		\item $|\Lambda_n(\alpha)|=(-1)^{n-1}\Lambda_n(\alpha)$ and $\Lambda_n(\alpha) \to 0$ for $n\to \infty$ . \label{it:LambdaFirst}
		\item $\dfrac{1}{2q_{n+1}}\leq |\Lambda_n(\alpha)| \leq \dfrac{1}{q_{n+1}}<\dfrac{1}{a_nq_n}$ \label{it:LambdaSec}
		\item $|\Lambda_n(\alpha)|q_n = c_n$, where $c_n$ is defined in \eqref{eq:cn}. \label{it:LambdaThird}
	\end{enumerate}
\end{lem}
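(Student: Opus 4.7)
The plan is to treat parts (a), (b) and (c) as direct consequences of identities already collected in the Preliminaries: the convergent alternation, the error bound \eqref{eq:stderror}, the cross-relation \eqref{eq:pq}, and the formulas \eqref{eq:AlphMinusFrac} and \eqref{eq:AlphaRewritten}. I would prove them in the stated order, since the convergence claim in (a) uses the upper bound from (b), and (c) is the only part carrying any real content.

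For part~(a), I would invoke the observation already recorded in the text that $p_n/q_n<\alpha$ for odd $n$ and $p_n/q_n>\alpha$ for even $n$; writing $\Lambda_n(\alpha)=q_n(\alpha-p_n/q_n)$ with $q_n>0$ then immediately gives $|\Lambda_n(\alpha)|=(-1)^{n-1}\Lambda_n(\alpha)$, and the limit $\Lambda_n(\alpha)\to 0$ follows from the upper bound in (b) and $q_{n+1}\to\infty$. For part~(b), I would multiply the error bound \eqref{eq:stderror} through by $q_n$, obtaining $1/(q_{n+1}+q_n)\le|\Lambda_n(\alpha)|<1/q_{n+1}$; the left-hand estimate $1/(2q_{n+1})$ then follows from $q_n\le q_{n+1}$, and the final strict inequality $1/q_{n+1}<1/(a_n q_n)$ from the recurrence $q_{n+1}=a_n q_n+q_{n-1}$ together with $q_{n-1}>0$.

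Part~(c) is the only step that needs a calculation. I would substitute the representation \eqref{eq:AlphaRewritten} for $\alpha$ directly into $\Lambda_n(\alpha)=q_n\alpha-p_n$ and simplify to
\begin{equation*}
\Lambda_n(\alpha)=\frac{p_{n-1}q_n-p_n q_{n-1}}{q_n\alpha_n^++q_{n-1}},
\end{equation*}
whose numerator equals $(-1)^{n-1}$ by \eqref{eq:pq}. Taking absolute values, multiplying by $q_n$ and dividing numerator and denominator by $q_n$ then yields
\begin{equation*}
|\Lambda_n(\alpha)|\,q_n=\frac{1}{\alpha_n^++q_{n-1}/q_n},
\end{equation*}
and \eqref{eq:AlphMinusFrac} rewrites $q_{n-1}/q_n$ as $\alpha_n^-$, so the denominator becomes $\alpha_n^++\alpha_n^-$, giving the required value $c_n$. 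The main (and rather minor) obstacle throughout is bookkeeping: the paper uses an indexing of $(p_n,q_n)$ shifted by one from the usual convention, so I would verify that \eqref{eq:pq} contributes precisely $(-1)^{n-1}$ in the numerator above and that this is compatible with the sign established in part~(a). No substantive difficulty is expected beyond that consistency check.
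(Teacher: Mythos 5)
Your proposal is correct and follows essentially the same route as the paper: (a) from the alternation of convergents, (b) by multiplying the error bound \eqref{eq:stderror} by $q_n$, and (c) by inserting \eqref{eq:AlphaRewritten} into $\Lambda_n(\alpha)=q_n\alpha-p_n$ and using \eqref{eq:pq} and \eqref{eq:AlphMinusFrac}, with your version merely tracking the sign $(-1)^{n-1}$ explicitly. The only (negligible) caveat is that your justification $q_{n-1}>0$ for the final strict inequality fails at $n=1$ where $q_0=0$, but this borderline case is already present in the paper's own statement.
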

\begin{proof}
The first part of \ref{it:LambdaFirst} follows from the fact that the $n$th convergent $p_n/q_n$ is smaller than $\alpha$ for every odd value of $n$, and greater than $\alpha$ for every even value of $n$. The second part of \ref{it:LambdaFirst} and the statement of \ref{it:LambdaSec} are direct consequences of \eqref{eq:stderror}. 
For \ref{it:LambdaThird} it follows by \eqref{eq:AlphaRewritten} that
$$q_n|\Lambda_n(\alpha)|=q_{n}^2\left|\alpha-\frac{p_{n}}{q_{n}}\right|=q_{n}^2\left|\frac{p_{n}\alpha^+_{n}+p_{{n}-1}}{q_{n}\alpha^+_{n}+q_{n-1}}-\frac{p_{n}}{q_{n}}\right| = \frac{q_n}{q_n\alpha^+_{n}+q_{n-1}}=c_n(\alpha),$$
where we have used \eqref{eq:pq} and \eqref{eq:AlphMinusFrac}. 
\end{proof}
Whenever it is clear from context, we will drop the dependence on $\alpha$ of $\Lambda_n(\alpha)$.

\subsection{Ostrowski representation}

Another useful tool will be the fact that one can represent each integer $N\in\NN$ in a unique way with respect to the best approximation denominators of some irrational $\beta$. 
\begin{thmAlph}[Ostrowski representation]\label{OstrowskiRepr}
Let $\beta \in (0,1)$ be an irrational number with continued fraction expansion $[0;b_1, b_2, \ldots]$ and best approximation denominators $(q_n(\beta))_{n\geq 0}$. Then every non-negative integer $t$ has a unique expansion
\begin{equation}
\label{eq:Ostrowski}
t=\sum_{n=1}^N v_n q_n(\beta) ,
\end{equation}
where:
\begin{enumerate}[label=\roman*)]
\item $0\leq v_1< b_1$ and $0\leq v_n \leq b_n$ for $n>1$.
\item If $v_n=b_n$ for some $n$, then $v_{n-1}=0$.
\item $N=N(t)=\mathcal{O}(\log(t))$
\end{enumerate}
We refer to \eqref{eq:Ostrowski} as the Ostrowski representation of $N$ in base $\beta$.
\end{thmAlph}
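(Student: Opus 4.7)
The plan is to prove this classical result via a greedy decomposition, verify the digit constraints from the convergent recurrence, establish uniqueness by induction, and deduce the logarithmic bound from the exponential growth of $q_n(\beta)$.

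First I would construct the representation greedily. Given $t\geq 1$, let $N=N(t)$ be the unique index with $q_N(\beta)\leq t<q_{N+1}(\beta)$, set $v_N:=\lfloor t/q_N(\beta)\rfloor$, and iterate the procedure on the remainder $r:=t-v_N q_N(\beta)$. Existence of a full expansion of the form \eqref{eq:Ostrowski} then follows by induction on $t$, since $r<q_N(\beta)$ guarantees termination.

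Next I would verify (i) and (ii). The inequality $v_N\leq b_N$ is immediate from
\[
v_N q_N(\beta) \leq t < q_{N+1}(\beta) = b_N q_N(\beta) + q_{N-1}(\beta).
\]
If equality $v_N=b_N$ holds, then $r = t - b_N q_N(\beta) < q_{N-1}(\beta)$, so the greedy procedure applied to $r$ returns a leading index at most $N-2$, forcing $v_{N-1}=0$. For the boundary case $N=1$, one has $t<q_2(\beta)=b_1$, which yields the strict inequality $v_1<b_1$. Uniqueness follows either by iterating the greedy argument (since $N$ and $v_N$ are each uniquely determined by $t$, the entire expansion is) or by a counting argument: induction on $N$ together with the recurrence $q_{n+1}(\beta)=b_n q_n(\beta)+q_{n-1}(\beta)$ shows that the number of admissible digit sequences with largest index at most $N$ equals $q_{N+1}(\beta)$, which matches $|\{0,1,\ldots,q_{N+1}(\beta)-1\}|$.

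Finally, for the bound $N(t)=\calO(\log t)$, observe that $b_n\geq 1$ forces $q_{n+1}(\beta)\geq q_n(\beta)+q_{n-1}(\beta)$, so the denominators grow at least as fast as the Fibonacci numbers, giving $q_n(\beta)\geq \varphi^{n-1}$ with $\varphi=(1+\sqrt{5})/2$. Combined with $t\geq q_{N}(\beta)$ this gives $N(t)\leq 1+(\log t)/\log\varphi$. The main subtlety I anticipate is the carry condition (ii): one has to carefully thread the implication $v_N=b_N \Rightarrow r<q_{N-1}(\beta)$ through the recursion and check that at every stage the greedy step does not create the forbidden pattern $v_n=b_n$, $v_{n-1}\neq 0$. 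The rest is routine bookkeeping with the convergent recurrence.
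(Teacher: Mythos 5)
Your argument is sound in substance, but note that the paper does not actually prove Theorem~\ref{OstrowskiRepr}: it is quoted as a classical fact with a pointer to Kuipers and Niederreiter \cite[p.~126]{KN06}, so there is no internal proof to compare against. Your greedy construction, the digit bounds via $q_{N+1}(\beta)=b_Nq_N(\beta)+q_{N-1}(\beta)$, and the Fibonacci-type growth estimate are exactly the standard textbook proof, and they are compatible with the paper's shifted indexing $q_0(\beta)=0$, $q_1(\beta)=1$, $q_2(\beta)=b_1$. Two caveats. First, your first uniqueness alternative (``$N$ and $v_N$ are uniquely determined by $t$, hence so is the expansion'') only shows that the greedy algorithm has a well-defined output; it does not by itself exclude a non-greedy admissible expansion of the same $t$. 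Your counting argument does close this, but it tacitly needs the lemma that every admissible digit string with largest index at most $N$ evaluates to at most $q_{N+1}(\beta)-1$ (easy induction, splitting on whether $v_N=b_N$); without knowing that the evaluation map lands in $\{0,\ldots,q_{N+1}(\beta)-1\}$, equal cardinalities plus surjectivity would not give injectivity. The same lemma also yields a shorter direct uniqueness proof: the tail $\sum_{n<N}v_nq_n(\beta)$ of any admissible expansion is smaller than $q_N(\beta)$, which forces its leading digit to agree with the greedy one, and one descends. Second, with this indexing $q_2(\beta)=b_1$ can equal $1$, so the bound $q_n(\beta)\geq\varphi^{n-1}$ is slightly too strong as stated; one only gets $q_n(\beta)\geq F_{n-1}\geq\varphi^{n-3}$ (Fibonacci numbers), which of course still gives $N(t)=\calO(\log t)$. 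Neither point is a genuine gap, just bookkeeping you should make explicit.
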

\noindent 
The proof of Theorem~\ref{OstrowskiRepr} can be found in \cite[p.~126]{KN06}. Recall that we defined (see \eqref{eq:DefDt}) for an irrational $\beta$
\begin{equation*}
	D_t(\beta)= \sum_{s=1}^t\{\beta t\}-\frac{1}{2}.
\end{equation*}
It is possible to give an explicit formula for $D_t(\beta)$ in terms of the digits of the Ostrowski expansion of $t$ in base $\beta$.

\begin{thmAlph}[\cite{BS95}, Theorem~1]\label{thm:DtFormula}
Let $\beta \in (0,1)$ and irrational. For any $t\geq1$ let $t=\sum_{i=1}^N v_i q_i(\beta)$ be the Ostrowski expansion of $t$ in base $\beta$. Then we have
\begin{equation}\label{eq:DtFormula}
	D_t(\beta) = \frac{1}{2} \sum_{i=1}^N\left(v_i\Lambda_{i}(\beta)(v_iq_{i}(\beta)+1)+(-1)^{i}v_i\right) +\sum_{1\leq i<j\leq N}v_iv_jq_{i}(\beta)\Lambda_{j}(\beta).
\end{equation}
\end{thmAlph}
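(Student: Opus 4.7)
The plan is to prove \eqref{eq:DtFormula} by induction on the length $N$ of the Ostrowski expansion $t = \sum_{i=1}^{N} v_i q_i(\beta)$. Setting $t' := \sum_{i=1}^{N-1} v_i q_i(\beta)$, so that $t = t' + v_N q_N(\beta)$ and $t' < q_N(\beta)$, a direct inspection of the right-hand side of \eqref{eq:DtFormula} shows that passing from $t'$ to $t$ increases it by exactly
\begin{equation*}
\Delta_N \;:=\; \tfrac{1}{2}\,v_N \Lambda_N(\beta)\bigl(v_N q_N(\beta)+1\bigr) \;+\; \tfrac{1}{2}(-1)^N v_N \;+\; \Lambda_N(\beta) \sum_{i=1}^{N-1} v_i v_N q_i(\beta).
\end{equation*}
The induction therefore reduces to showing $D_t(\beta) - D_{t'}(\beta) = \Delta_N$, with the base case $N=1$ (where $t = v_1 < a_1 = \lfloor 1/\beta\rfloor$, so every $\{\beta s\}$ equals $\beta s$) verified by a direct computation.

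To handle the left-hand side, I parametrise $s \in \{t'+1,\dots,t\}$ as $s = t' + j\, q_N(\beta) + r$ with $0 \le j < v_N$ and $1 \le r \le q_N(\beta)$. Because $\beta q_N(\beta) = p_N(\beta) + \Lambda_N(\beta)$ with $p_N(\beta)\in\Z$, this yields
\begin{equation*}
\{\beta s\} \;=\; \bigl\{\,\{\beta t'\} + j \Lambda_N(\beta) + \beta r\,\bigr\}.
\end{equation*}
By Lemma~\ref{lem:PropLambda} we have $|j \Lambda_N(\beta)| < v_N / q_{N+1}(\beta) \le 1$ and $\sign(\Lambda_N(\beta)) = (-1)^{N-1}$, which is the origin of the alternating sign $(-1)^N$ in $\Delta_N$. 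To evaluate the sum over $r$, I would invoke the standard fact (a consequence of the three-distance theorem) that the multiset $\{\{\beta r\} : 1 \le r \le q_N(\beta)\}$ partitions $[0,1)$ into intervals of only two distinct lengths, $|\Lambda_{N-1}(\beta)|$ and $|\Lambda_N(\beta)|$, in an explicit pattern dictated by the Ostrowski combinatorics.

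The main obstacle is then the careful bookkeeping of the wrap-arounds in $\bigl\{\,\{\beta t'\} + j\Lambda_N(\beta) + \beta r\,\bigr\}$: for each fixed $j$ one must count the $r \in \{1,\dots,q_N(\beta)\}$ for which $\lfloor \beta r + \eta_j\rfloor \neq \lfloor \beta r\rfloor + \lfloor \eta_j\rfloor$, where $\eta_j := \{\beta t'\} + j\Lambda_N(\beta)$. Using the three-distance description, this exceptional set can be parametrised by the Ostrowski digits of $t'$, and the count turns out to be linear in each $v_i$ for $i<N$ with coefficient $v_N q_i(\beta)\Lambda_N(\beta)$, yielding the cross-term $\Lambda_N(\beta)\sum_{i<N}v_iv_N q_i(\beta)$. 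The diagonal piece $\tfrac{1}{2} v_N \Lambda_N(\beta)(v_N q_N(\beta)+1)$ emerges from summing the shifts $j\Lambda_N(\beta)$ across all $v_N$ blocks, and the offset $\tfrac{1}{2}(-1)^N v_N$ arises from the net $v_N$ wrap-arounds whose sign is fixed by $\sign(\Lambda_N(\beta))$. Once this case analysis is executed the verification $D_t(\beta) - D_{t'}(\beta) = \Delta_N$ is purely algebraic, and the induction closes.
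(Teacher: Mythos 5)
The paper does not prove this statement at all --- it is quoted directly from Brown and Shiue \cite{BS95} --- so your attempt can only be judged on its own merits. Its skeleton is fine: the increment $\Delta_N$ of the right-hand side is computed correctly, the base case $N=1$ checks out, and the parametrisation $s=t'+jq_N(\beta)+r$ with $\{\beta s\}=\{\eta_j+\beta r\}$, $\eta_j=\{\beta t'\}+j\Lambda_N(\beta)$, is sound. But the decisive step is only asserted, and the mechanism you propose for it is wrong as stated. Using $\{\beta r\}=\{rp_N/q_N\}+r\Lambda_N/q_N$ for $1\le r<q_N$ (no three-distance theorem is needed) and $\{\beta q_N\}=\{\Lambda_N\}$, and assuming for simplicity that every $\eta_j$ lies in $[0,1)$, your block decomposition reduces the induction step exactly to the counting identity
\[
\sum_{j=0}^{v_N-1} W_j \;=\; v_N\bigl(q_N\{\beta t'\}-\Lambda_N t'\bigr)\;=\;v_N\,\bigl(t'p_N \bmod q_N\bigr),
\qquad W_j:=\#\bigl\{1\le r\le q_N:\ \{\beta r\}\ge 1-\eta_j\bigr\},
\]
the second equality holding because $q_N\{\beta t'\}-\Lambda_Nt'=p_Nt'-q_N\lfloor\beta t'\rfloor$ is an integer lying in $\{0,\dots,q_N-1\}$. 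In particular the wrap-around count is an integer; it cannot be ``linear in each $v_i$ with coefficient $v_Nq_i(\beta)\Lambda_N(\beta)$'' as you claim, because that quantity is irrational. The cross-term $\Lambda_N v_N t'=\Lambda_N\sum_{i<N}v_iv_Nq_i$ does not come from the wrap-arounds at all: it is the residue $q_N\{\beta t'\}-(t'p_N\bmod q_N)=\Lambda_Nt'$ left over from the drift $q_N\sum_j\eta_j$ once the integer count is removed (the diagonal piece and the offset $\tfrac12(-1)^Nv_N$ do come out of the block sums and the $r=q_N$ term, roughly as you say).

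What your proposal defers as ``bookkeeping'' is therefore exactly the content of the theorem: one must prove the displayed identity precisely, including the degenerate situations it glosses over --- blocks with $\eta_j<0$ or $\eta_j\ge1$ (which can occur when $\{\beta t'\}$ is within about $v_N|\Lambda_N|$ of $0$ or $1$), the term $r=q_N$ where $\{\beta q_N\}$ equals $\Lambda_N$ or $1+\Lambda_N$ according to the parity of $N$, and the tie at $r\equiv-t'\pmod{q_N}$, whose resolution uses the sign $(-1)^{N-1}$ of $\Lambda_N$ together with the Ostrowski digit constraints ($0\le v_1<b_1$, $v_N\le b_N$, and $v_{i-1}=0$ whenever $v_i=b_i$, which also guarantee $t'<q_N$). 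Until that counting is carried out --- and with the corrected attribution of where each term originates --- the argument has a genuine gap at its core, even though the inductive frame you set up is a reasonable one and could in principle be completed along these lines.
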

It is well known (see for example \cite{HL21,O22}) that for irrationals $\beta$ with bounded c.f.c. we have that
\begin{equation}\label{eq:Dtbcfc}
	D_t(\beta) = \calO(\log(t)).
\end{equation}
\begin{rem}\label{rem:DtBetaBound}
This also follows from Theorem~\ref{thm:DtFormula} together with the facts that $q_n(\beta)|\Lambda_n(\beta)|=c_n(\beta)$ and that for the Ostrowski representation of $t$ in base $\beta$ i.e. $t= \sum_{i=1}^N v_iq_i(\beta)$ we have $\sum_{i=1}^k v_iq_i(\beta) < q_{k+1}(\beta)$ for all $k \in \{1,\ldots,N-1\}$. Using these facts we obtain for an irrational $\beta=[0;b_1,b_2,\ldots]$ that
\begin{align*}
|D_t(\beta)|&\leq \frac{1}{2} \sum_{i=1}^Nv_i\left|1-c_i(\beta)v_i-|\Lambda_i(\beta)|\right| + \sum_{j=1}^Nv_j|\Lambda_j(\beta)|\sum_{i=1}^{j-1}v_iq_i\\
&\leq \frac{1}{2}\sum_{i=1}^Nv_i + \sum_{i=1}^Nv_i\leq \frac{3}{2} \sum_{i=1}^Nb_i.
\end{align*}
Equation \eqref{eq:Dtbcfc} now follows by the fact that $\beta$ has bounded c.f.c. and since $N=\calO(\log(t))$ (see Theorem~\ref{OstrowskiRepr}).
\end{rem}

In fact it will turn out later on that we need analogs of \eqref{eq:DtFormula} and \eqref{eq:Dtbcfc} for certain rational $\beta$. More precisely, we are interested in the case $\beta_n=\alpha_{n}^- = q_{n-1}(\alpha)/q_{n}(\alpha)$ for a fixed $n\in\NN$. Observe that in this case the Ostrowski representation in base $\beta_n=[0;b_1,\ldots,b_{n-1}]$ of $t\in\{1,\ldots,q_n(\beta_n)-1\}$ is still well defined (here $b_i =a_{n-i}$). One can check, by following the exact same arguments as in \cite{BS95} and Remark~\ref{rem:DtBetaBound}, that the statement in Theorem~\ref{thm:DtFormula} and \eqref{eq:Dtbcfc} are still valid for $\beta_n=\alpha_{n}^- = q_{n-1}(\alpha)/q_{n}(\alpha)$ as long as $t<q_{n}(\beta_n)$.\\
	Another fact that should be mentioned in this context is that 
	$$q_n(\beta_n)=q_n(\alpha_n^-)=q_n(\alpha) \text{ for all } n\in\NN.$$
	This can be seen by using the fact that for $\gamma=[0;c_1,c_2,\ldots]$ we have $q_n(\gamma) = \det(C_{n-1}(\gamma))$ combined with a Laplace expansion argument. Here $C_{n-1}(\gamma) \in \ZZ^{(n-1)\times(n-1)}$ is a tridiagonal matrix where the entries in the first upper minor diagonal are $-1$, the entries in the first lower minor diagonal are $1$ and the main diagonal consists of $c_1,\ldots,c_{n-1}$.
	

	\section{Proof-strategy for Theorems~\ref{thm:PqnBehaviour} and \ref{thm:ConvBoundedCFC}}\label{sec:ProofStrategy}
	
	In \cite{VM16} Mestel and Verschueren were able to find a suitable decomposition of the product $P_{F_n}(\varphi)$, where $\varphi$ is the golden ratio. With the help of this decomposition they managed to prove that the sequence $(P_{F_n}(\varphi))_{n\in\N}$ is convergent, where $(F_n)_{n\in\N}$ are the Fibonacci numbers (note that $q_n(\varphi)=F_n$). Exactly this approach was generalized to arbitrary quadratic irrationals in \cite[Lemma~4.2]{GN18}.	This was one of the key steps in proving the corresponding convergence result for quadratic irrationals (see Theorem~\ref{thm:QuadIrrConv}). Therefore our first step will be to generalize \cite[Lemma~4.2]{GN18} to the case of an arbitrary irrational $\alpha$. This will result in the following lemma.
	\begin{lem}\label{lem:decomp}
		Let $\alpha \in(0,1)$ and irrational. For $n\geq1$ let $p_n/q_n$ be the $n$th convergent of $\alpha$. Then we can rewrite the product  $P_{q_{n}}(\alpha)$ as
		$$P_{q_{n}}(\alpha) = \prod_{r=1}^{q_{n}} |2\sin \pi r \alpha | = A_nB_nC_n ,$$ 
		where
		\begin{align}
			A_n &= \left| 2q_{n} \sin(\pi \Lambda_n)\right|; \label{eq:An}\\
			B_n &= \left| \prod_{t=1}^{q_{n}-1} \frac{s_{nt}}{2 \sin (\pi t / q_{n})} \right|;
				\label{eq:Bn} \\
			C_n &= \prod_{t=1}^{q_{n}-1} \left( 1- \frac{s_{n0}^2}{s_{nt}^2} \right)^{1/2}, 
				\label{eq:Cn}
		\end{align}
		where $\Lambda_n$ is as in \eqref{eq:Lambdan} and for $t\in\{0,1,\ldots,q_n-1\}$ we have set
		\begin{equation}\label{eq:snt}
			s_{nt}:=2\sin\left(\pi \left[\frac{t}{q_n} -|\Lambda_n|\left(\left\{\frac{tq_{n-1}}{q_n}\right\} -\frac{1}{2}\right)\right]\right).
		\end{equation}
	\end{lem}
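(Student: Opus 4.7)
The plan is to follow the Mestel--Verschueren strategy (generalized in \cite{GN18}) of separating the last factor, reindexing with the best approximation $p_n/q_n$, and then exploiting a symmetric pairing. First I would peel off the term $r=q_n$: since $q_n\alpha=p_n+\Lambda_n$ and $p_n\in\ZZ$, we have $|2\sin(\pi q_n\alpha)|=|2\sin(\pi\Lambda_n)|$, which will combine with the identity $\prod_{t=1}^{q_n-1}2\sin(\pi t/q_n)=q_n$ at the end to produce $A_n$. For the remaining factors with $1\le r\le q_n-1$, write $r\alpha=rp_n/q_n+r\Lambda_n/q_n$ and change variables to $t:=rp_n\bmod q_n$; since $\gcd(p_n,q_n)=1$ this is a bijection of $\{1,\dots,q_n-1\}$, and \eqref{eq:pq} gives $r\equiv(-1)^n tq_{n-1}\pmod{q_n}$. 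Using Lemma~\ref{lem:PropLambda}\ref{it:LambdaFirst} to split into the cases $n$ even and $n$ odd, a short case analysis shows that, in either parity,
\[
r\alpha \equiv \frac{t}{q_n} - |\Lambda_n|\!\left\{\tfrac{tq_{n-1}}{q_n}\right\} + \varepsilon_n \pmod{1},
\]
with $\varepsilon_n=0$ or $\varepsilon_n=|\Lambda_n|$, so each factor equals $v_t:=|2\sin(\pi(t/q_n+r(t)\Lambda_n/q_n))|$.

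The key trick is the pairing $r\leftrightarrow q_n-r$, equivalently $t\leftrightarrow q_n-t$. From $(q_n-r)\alpha=p_n+\Lambda_n-r\alpha$ we see $|2\sin(\pi(q_n-r)\alpha)|=|2\sin(\pi(r\alpha-\Lambda_n))|$, and applying the product-to-sum identity with $A=\pi r\alpha$, $B=A-\pi\Lambda_n$ gives
\[
v_t\,v_{q_n-t}=|4\sin A\sin B|=\bigl|4\sin^2(A-\tfrac{\pi}{2}\Lambda_n)-4\sin^2(\tfrac{\pi}{2}\Lambda_n)\bigr|.
\]
Using the parity case analysis above, one checks that $A-\tfrac{\pi}{2}\Lambda_n=\pi x_{nt}$ where $x_{nt}=t/q_n-|\Lambda_n|(\{tq_{n-1}/q_n\}-1/2)$, which is exactly the argument defining $s_{nt}$; moreover $x_{n0}=|\Lambda_n|/2$, so $s_{n0}=2\sin(\pi|\Lambda_n|/2)$. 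Hence
\[
v_t\,v_{q_n-t}=|s_{nt}^2-s_{n0}^2|.
\]

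Because the map $t\mapsto q_n-t$ is an involution of $\{1,\dots,q_n-1\}$, we have $\bigl(\prod_{t=1}^{q_n-1}v_t\bigr)^2=\prod_{t=1}^{q_n-1}v_tv_{q_n-t}$, so
\[
\prod_{t=1}^{q_n-1}v_t=\prod_{t=1}^{q_n-1}\bigl(s_{nt}^2-s_{n0}^2\bigr)^{1/2}=\prod_{t=1}^{q_n-1}s_{nt}\cdot\prod_{t=1}^{q_n-1}\!\left(1-\frac{s_{n0}^2}{s_{nt}^2}\right)^{1/2}=C_n\prod_{t=1}^{q_n-1}s_{nt}.
\]
Finally, inserting $\prod_{t=1}^{q_n-1}2\sin(\pi t/q_n)=q_n$ rewrites $\prod s_{nt}=q_n B_n$, and combining with the isolated $r=q_n$ factor yields $P_{q_n}(\alpha)=|2\sin(\pi\Lambda_n)|\cdot q_n B_n C_n=A_nB_nC_n$.

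The main obstacle is the bookkeeping around the parity of $n$: the sign of $\Lambda_n$ and the representative of $r\equiv(-1)^ntq_{n-1}\pmod{q_n}$ both flip with $n$, yet the final expression for $x_{nt}$ must come out parity-free. A secondary subtlety is justifying $s_{nt}^2-s_{n0}^2\ge 0$ so that the square roots in $C_n$ are real; this follows because $|\Lambda_n|<1/q_n$ by Lemma~\ref{lem:PropLambda}\ref{it:LambdaSec}, so $x_{n0}=|\Lambda_n|/2$ is strictly smaller than every $x_{nt}$ with $1\le t\le q_n-1$ (after reflecting into $(0,1/2]$), placing the sine values in the regime where monotonicity of $|\sin(\pi\,\cdot\,)|$ gives the desired inequality.
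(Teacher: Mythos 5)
Your proposal is correct and follows essentially the same route as the paper's proof (itself adapted from \cite{VM16,GN18}): pairing $r\leftrightarrow q_n-r$, the identity $\sin A\sin B=\sin^2\!\big(\tfrac{A+B}{2}\big)-\sin^2\!\big(\tfrac{A-B}{2}\big)$, the reindexing $t=rp_n\bmod q_n$ with $p_nq_{n-1}\equiv(-1)^n\pmod{q_n}$ and $\Lambda_n=(-1)^{n-1}|\Lambda_n|$, and finally $\prod_{t=1}^{q_n-1}2\sin(\pi t/q_n)=q_n$. The only (harmless) differences are the order of the squaring/reindexing steps and that you verify $s_{nt}>s_{n0}$ directly, which the paper defers to Lemma~\ref{lem:propSequences}.
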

	
	The second step and also the main part of this article is to understand how the structure of the c.f.e. of the irrational $\alpha$ influences the asymptotic behaviour of $(P_{q_{n}}(\alpha))_{n\in\NN}$. This is will be achieved by a more detailed investigation of the factors $A_n$, $B_n$ and $C_n$ in Lemma~\ref{lem:decomp}. The results of this investigation are described in the subsequent lemma.
	\begin{lem}\label{lem:StructurePqn}
	Let $\alpha\in(0,1)$ be irrational with c.f.e. $\alpha=[0;a_1,a_2,\ldots]$ and let $D_t(\beta)$, $c_n$ and $\Lambda_n$ be defined as in \eqref{eq:DefDt}, \eqref{eq:cn} and \eqref{eq:Lambdan}. Let $(\tau_n)_{n\in\NN}$ and $(\kappa_n)_{n\in\NN}$ be eventually increasing sequences of natural numbers with 
	\begin{equation}\label{eq:taunkappan}
	 \tau_n,\kappa_n = \calO(q^{1/2}_n). 
	\end{equation}
	Then for sufficiently large $n$ we have that
	\begin{equation*}
		P_{q_{n}}(\alpha)=A_nB_nC_n,
	\end{equation*}
	where 
	\begin{align}
		A_n=&2\pi c_{n}(1+\calO(\Lambda_{n}^2));\\
		%
		\log(B_n) =& -2 \pi^2\frac{c_n}{q^2_n}\sum_{t=1}^{M_n-1}\frac{D_t(\alpha_n^-)}{\sin(\pi t/q_n)\sin(\pi(t+1)/q_n)} \nonumber\\
		&-2\sum_{t=1}^{\tau_n}\sum_{j=2}^{\tau_n} \frac{1}{j}\left(\frac{c_{n}\xi_{nt}}{t}\right)^j 	+\calO\left(\tau_n^{-1}\right); \label{eq:logBn}\\
		C_n=&\prod_{t=1}^{\kappa_n} \left( 1- \frac{1}{4\left( t/ c_{n}-\xi_{n t} \right)^2}\right)+\calO\left(\kappa_n^{-1}\right), \label{eq:CnPrecise}
	\end{align}
	with 
	\begin{align}
	\xi_{nt} &= \left\{\frac{tq_{n-1}}{q_n}\right\}-\frac{1}{2} \text{ for } t\in\{0,1,\ldots,q_{n}-1\}. \label{eq:xint}
	\end{align}
	
\end{lem}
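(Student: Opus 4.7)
The plan is to refine each factor in the decomposition $P_{q_n}(\alpha) = A_n B_n C_n$ supplied by Lemma~\ref{lem:decomp}, using throughout the identity $q_n|\Lambda_n| = c_n$ from Lemma~\ref{lem:PropLambda}\ref{it:LambdaThird} and the symmetry $\xi_{n,q_n-t} = -\xi_{nt}$, which via $\sin(\pi-x)=\sin x$ yields $s_{n,q_n-t} = s_{nt}$. This symmetry collapses every product over $t\in\{1,\ldots,q_n-1\}$ to a product over $t\in\{1,\ldots,M_n\}$ (with a squaring in $B_n$ and with cancellation of the square root in $C_n$, plus an isolated middle term for even $q_n$ that is easily absorbed). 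The bound for $A_n$ is then immediate from $\sin(\pi\Lambda_n)=\pi\Lambda_n(1+O(\Lambda_n^2))$ combined with $2q_n|\Lambda_n|=2c_n$.

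For $C_n$, after pairing I would split the product at $t=\kappa_n$. For $t\leq\kappa_n$, linearizing $\sin$ gives $s_{n0} = \pi|\Lambda_n|(1+O(\Lambda_n^2))$ and $s_{nt} = 2\pi|\Lambda_n|(t/c_n-\xi_{nt})(1+O((t/q_n)^2))$, so that $s_{n0}^2/s_{nt}^2 = 1/(4(t/c_n-\xi_{nt})^2)$ with an aggregate perturbation in the product bounded by $O(\kappa_n/q_n^2)$, which is $o(\kappa_n^{-1})$ by the hypothesis $\kappa_n=O(q_n^{1/2})$. For $t>\kappa_n$, the bounds $s_{nt}^2\gtrsim(t/q_n)^2$ and $s_{n0}^2=O(c_n^2/q_n^2)$ yield $s_{n0}^2/s_{nt}^2 = O(c_n^2/t^2)$, so $\log(1+\cdot) = O(\cdot)$ gives a tail contribution $O(\sum_{t>\kappa_n}1/t^2)=O(\kappa_n^{-1})$.

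The analysis of $B_n$ is the main challenge. Writing
\[
\frac{s_{nt}}{2\sin(\pi t/q_n)} = \cos(\pi|\Lambda_n|\xi_{nt}) - \sin(\pi|\Lambda_n|\xi_{nt})\cot(\pi t/q_n),
\]
take logarithms and Taylor-expand in $y_t := \pi|\Lambda_n|\xi_{nt}$. The $j=1$ coefficient $-\xi_{nt}\cot(\pi t/q_n)$ is handled \emph{exactly} via Abel summation: using $\xi_{nt}=D_t(\alpha_n^-)-D_{t-1}(\alpha_n^-)$ together with the telescoping identity $\cot(\pi t/q_n)-\cot(\pi(t+1)/q_n) = \sin(\pi/q_n)/(\sin(\pi t/q_n)\sin(\pi(t+1)/q_n))$, one obtains
\[
\sum_{t=1}^{M_n}\xi_{nt}\cot(\pi t/q_n) = D_{M_n}(\alpha_n^-)\cot(\pi M_n/q_n) + \sin(\pi/q_n)\sum_{t=1}^{M_n-1}\frac{D_t(\alpha_n^-)}{\sin(\pi t/q_n)\sin(\pi(t+1)/q_n)}.
\]
The boundary term is $O(1/q_n)$ via the trivial bound $|D_{M_n}|\leq M_n$ and $\cot(\pi M_n/q_n)=O(1/q_n)$, while multiplying by the symmetry-doubled prefactor and using $\pi|\Lambda_n|\sin(\pi/q_n)=\pi^2 c_n/q_n^2(1+O(q_n^{-2}))$ reproduces the advertised leading $D_t$-sum.

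For $j\geq 2$ the uniform bound $|y_t\cot(\pi t/q_n)|\leq c_n/(2t)\leq 1/2$ (using $\cot x<1/x$ on $(0,\pi/2)$ and $c_n<1$) tames the series. The tail over $t>\tau_n$ contributes $O(\sum_{t>\tau_n}c_n^2/t^2)=O(\tau_n^{-1})$ to the error. For $t\leq\tau_n$, the identity $\log(\sin(x-y)/\sin x) = \log(1-y/x) + O(xy+y^2+x^4)$, obtained from $\log(\sin z/z)=-z^2/6+O(z^4)$, allows me to replace $y_t\cot(\pi t/q_n)$ by $c_n\xi_{nt}/t$ at aggregate cost $O(\tau_n^2/q_n^2)=O(1/q_n)$ thanks to $\tau_n=O(q_n^{1/2})$; truncating the inner $j$-sum at $\tau_n$ is geometrically negligible because the base is $\leq 1/2$; and the $\log\cos(\pi|\Lambda_n|\xi_{nt})$ terms sum to $O(q_n\Lambda_n^2)=O(c_n|\Lambda_n|)=o(1)$. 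The main obstacle is precisely this simultaneous bookkeeping: coordinating the two truncations (in $t$ at $\tau_n$ and in $j$ at $\tau_n$), the linearizations $\sin z\mapsto z$ and $\cot x\mapsto 1/x$, and the Abel boundary term, so that every residual error collapses into the single $O(\tau_n^{-1})$ under the sole hypothesis $\tau_n,\kappa_n=O(q_n^{1/2})$.
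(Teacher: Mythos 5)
Your proposal is correct and follows essentially the same route as the paper: the decomposition of Lemma~\ref{lem:decomp}, the symmetry $s_{n(q_n-t)}=s_{nt}$ to halve the products, linearization with a split at $\kappa_n$ for $C_n$, and for $B_n$ a logarithmic expansion whose linear part is Abel-summed against the telescoping cotangent differences to produce the $D_t(\alpha_n^-)$ sum, with the higher-order terms truncated at $\tau_n$. Your variants (summing by parts with $\xi_{nt}=D_t-D_{t-1}$ exactly instead of $\sin(\pi|\Lambda_n|\xi_{nt})$, and using $\log(\sin z/z)=-z^2/6+\calO(z^4)$ to pass from $h_{nt}$ to $c_n\xi_{nt}/t$) are only cosmetic differences from the argument in Lemmas~\ref{lem:AnSeparate}, \ref{lem:CnSeparate} and \ref{lem:BnSeparate}.
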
 
\begin{rem}
	It should be mentioned at this point that the condition in \eqref{eq:taunkappan} on the eventually increasing sequences $(\kappa_n)_{n\in\NN}$, $(\tau_n)_{n\in\NN}$ could be relaxed to $\tau_n,\kappa_n = \smallCalO(q_n)$. This would lead to remainder terms of order  $\calO(\tau_n^{-1} + \tau_n^2q_n^{-2})$ and $\calO(\kappa_n^{-1} + \kappa_n^2q_n^{-2})$ in \eqref{eq:logBn} and \eqref{eq:CnPrecise}, respectively.	But it is enough for our purposes to stick to the less technical version described in \eqref{eq:taunkappan}.
\end{rem}
We will see later on that the proof of Theorem~\ref{thm:PqnBehaviour} is then a direct consequence of Lemma~\ref{lem:StructurePqn}. 

Another important observation is that in the case of quadratic irrationals the periodicity of the c.f.e. induces a certain structure (see Theorem~\ref{thm:QuadIrrConv}) on the convergent subsequences of $(P_{q_n}(\alpha))_{n\in\NN}$. In the more general case of irrationals with bounded c.f.c. this effect is reflected by the sufficient conditions stated in Theorem~\ref{thm:ConvBoundedCFC}. It will turn out that Lemma~\ref{lem:StructurePqn} together with the requirements of Theorem~\ref{thm:ConvBoundedCFC} will give us the desired convergence results.
	

	\section{Proof of Lemma~\ref{lem:decomp} and Lemma~\ref{lem:StructurePqn}}\label{sec:ProofLemmata}
	The proof of Lemma~\ref{lem:decomp} follows the lines of \cite{VM16} and \cite{GN18}, where Lemma~\ref{lem:decomp} has been shown for the cases $\alpha=\varphi$ (golden ratio) and $\alpha$ being a quadratic irrational, respectively. For the sake of completeness we give the most important steps here.
	
	\begin{proof}[Proof of Lemma~\ref{lem:decomp}]
	Following the exact same steps as in \cite[p.22]{GN18} and using that by definition of $\Lambda_n$ we have $q_n\alpha = p_n+ \Lambda_n$, one can easily check that
	\begin{align*} 
		P_{q_n}^2(\alpha)&=\left(2\sin \pi \Lambda_n \right)^2 \prod_{r=1}^{q_n-1}4 \left(\sin^2\left(\pi 
			r \alpha- \frac{\pi}{2} \Lambda_n\right)-\sin^2\left(\frac{\pi}{2} \Lambda_n\right)\right).
	\end{align*}
	If we further use the identity $\alpha = p_n/q_n +\Lambda_n/q_n$ we obtain
	\begin{align*}
		\sin^2 \left(\pi r \alpha- \frac{\pi}{2} \Lambda_n \right) = \sin^2 \left( \pi\left[\frac
			{rp_{n}}{q_n}+\Lambda_n \left(\frac{r}{q_n}-\frac{1}{2}\right)\right]\right).
	\end{align*}
	By the substitution $t= rp_n \bmod q_n$, and recalling from \eqref{eq:pq} that $p_nq_{n-1} = (-1)^n \bmod q_n$, we have
	\begin{align*}
		\sin^2 \left(\pi r \alpha- \frac{\pi}{2}\Lambda_n \right) &= \sin^2 \left(\pi\left[\frac{rp_{n} \bmod{q_n}}{q_n}+\Lambda_n\left(\frac{r}{q_n}-\frac{1}{2}\right)\right] \right)\\
		&= \sin^2 \left(\pi\left[\frac{t}{q_n}+ \Lambda_n \left(\frac{(-1)^ntq_{n-1} \bmod{q_n}}{q_n}- \frac{1}{2}\right)\right] \right)\\
		&=\frac{1}{4}s_{nt}^2,
	\end{align*}
	with $s_{nt}$ given in \eqref{eq:snt}. Note that we applied the identities $\Lambda_n=(-1)^{n-1}|\Lambda_n|$ (see Lemma~\ref{lem:PropLambda}) and
	$$\frac{(-1)^ntq_{n-1} \bmod{q_n}}{q_n}-\frac{1}{2}= \left\{\frac{(-1)^ntq_{n-1}}{q_n} \right\} - \frac{1}{2} = (-1)^n\left(\left\{\frac{tq_{n-1}}{q_n}\right\} - \frac{1}{2} \right).$$ 
	As $r$ runs through the values $1,2,\ldots , q_n-1$, so does $t=r p_n \bmod q_n$. Accordingly, we get
	\begin{align*}
		P^2_{q_n}(\alpha) &=(2\sin( \pi \Lambda_n ))^2 \prod_{t=1}^{q_n-1}
			 \left( s^2_{nt}-s^2_{n0} \right)\\
		&=(2q_n\sin( \pi \Lambda_n ))^2 \prod_{t=1}^{q_{n}-1}\frac{s^2_{nt}}{4\sin^2(\pi t/ q_n)}\prod_{t=1}^{q_{n}-1} \left(1-\frac{s^2_{n0}}{s^2_{nt}}\right).
	\end{align*}
	For the last equality above we have used the well-known identity
	\begin{equation*}
	\prod_{r=1}^{q-1} 2 \sin \left( \frac{\pi rp}{q} \right) = q 
	\end{equation*}
	whenever $p,q \in \ZZ$ satisfy $\gcd (p,q) =1$ (see e.g.\ \cite{M62} for a nice proof). Taking the square root of both sides, we arrive at
	$$P_{q_n}(\alpha)  = A_nB_nC_n ,$$
	where $A_n$, $B_n$ and $C_n$ are given in \eqref{eq:An}, \eqref{eq:Bn} and \eqref{eq:Cn}, respectively.
	\end{proof}
	
	In order to prove the statement of Lemma~\ref{lem:StructurePqn} we have to investigate each of the factors $A_n$, $B_n$ and $C_n$ more carefully. To do so we will follow again the ideas of \cite{VM16} and \cite{GN18}. This task will be split into three lemmata, one for each of the three factors. We start with the simplest one, the factor $A_n$.
	\begin{lem}\label{lem:AnSeparate}
		Let $A_n$ be as in \eqref{eq:An}. Then for sufficiently large $n$ we have that
		\begin{equation}
			A_n=2\pi c_{n}\left(1+\calO\left(\Lambda_{n}^2\right)\right).
		\end{equation}
	\end{lem}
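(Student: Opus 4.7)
The plan is to expand $\sin(\pi \Lambda_n)$ as a Taylor series about the origin. By Lemma~\ref{lem:PropLambda}\ref{it:LambdaFirst} we have $\Lambda_n \to 0$ as $n\to\infty$, so for $n$ sufficiently large $\pi|\Lambda_n|$ lies in a neighbourhood of $0$ on which the standard expansion
\[
    \sin(\pi\Lambda_n)=\pi\Lambda_n-\frac{(\pi\Lambda_n)^3}{6}+\mathcal{O}(\Lambda_n^5)=\pi\Lambda_n\bigl(1+\mathcal{O}(\Lambda_n^2)\bigr)
\]
is valid with an error term whose implied constant is absolute.

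Taking absolute values and inserting this into the definition \eqref{eq:An} of $A_n$ gives
\[
    A_n=|2q_n\sin(\pi\Lambda_n)|=2\pi q_n|\Lambda_n|\bigl(1+\mathcal{O}(\Lambda_n^2)\bigr).
\]
Now I would invoke Lemma~\ref{lem:PropLambda}\ref{it:LambdaThird}, which states that $q_n|\Lambda_n|=c_n$, to rewrite the prefactor and conclude
\[
    A_n=2\pi c_n\bigl(1+\mathcal{O}(\Lambda_n^2)\bigr),
\]
as claimed. There is no real obstacle here: the statement is essentially a one-line Taylor expansion combined with the identity $q_n|\Lambda_n|=c_n$ already collected in the preliminaries, and the only thing that needs to be checked is that the argument of $\sin$ is small enough for the expansion to kick in, which is guaranteed by part \ref{it:LambdaFirst} of Lemma~\ref{lem:PropLambda}.
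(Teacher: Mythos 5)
Your proposal is correct and follows essentially the same route as the paper: expand $\sin(\pi\Lambda_n)$ via $\sin(x)=x\left(1+\calO(x^2)\right)$ using $\Lambda_n\to 0$ from Lemma~\ref{lem:PropLambda}\ref{it:LambdaFirst}, then replace $q_n|\Lambda_n|$ by $c_n$ via Lemma~\ref{lem:PropLambda}\ref{it:LambdaThird}. No gaps.
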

	\begin{proof}
		Recall that by \eqref{eq:An} we know that $A_n=2\pi q_n \sin(\pi \Lambda_n)$. Since $\Lambda_n$
		tends to 0 for $n\to \infty$ (see Lemma~\ref{lem:PropLambda}), we get by $\sin(x) = x(1+\calO(x^2))$ for $x \to 0$ that for sufficiently large $n$ 
		$$A_n=2 q_n \sin(\pi \Lambda_n) = 2\pi q_n |\Lambda_n| \left(1+\calO(\Lambda_n^2)\right) = 
			2\pi c_n\left(1+\calO(\Lambda_n^2)\right).$$
		Note that for the last step we have used the third property of Lemma~\ref{lem:PropLambda}.
	\end{proof}
	
	Before moving on to the factor $C_n$ recall from \eqref{eq:Cn} that the sequences $s_{nt}$ given in \eqref{eq:snt} and $\xi_{nt}$ given in \eqref{eq:xint} play an important role for the behaviour of $C_n$. Further, it is not hard to see that
	\begin{equation}
		s_{nt}=2 \sin \left(\pi\left[\frac{t}{q_n} - \xi_{nt}|\Lambda_{n}| \right]\right).
	\end{equation}
	The situation is similar for the following sequence $h_{nt}$ and the factor $B_n$
	\begin{equation}\label{eq:hnt}
		h_{nt}:= \cot\left(\frac{\pi t}{q_n}\right)\sin\left( \pi |\Lambda_n|\xi_{nt} \right)\text{ for } t \in \{0,1,\ldots,q_n-1\}.
	\end{equation}
	The exact role of $h_{nt}$ will become more clear once we focus on the factor $B_n$, which happens in the last part of this section. But still it is convenient to introduce the sequence $h_{nt}$ already at this point since otherwise we would have to repeat a similar version of Lemma~\ref{lem:propSequences} later on. We continue with some auxiliary results on $s_{nt}$, $\xi_{nt}$ and $h_{nt}$.
	
	\begin{lem}\label{lem:propSequences}
	Let $s_{nt}$, $\xi_{nt}$ and $h_{nt}$ be the sequences given in \eqref{eq:snt}, \eqref{eq:xint} and \eqref{eq:hnt}. We have that:
	\begin{enumerate}[label=(\alph*)]
		\item $s_{nt} = s_{n(q_{n}-t)}$, $h_{nt} = h_{n(q_{n}-t)}$ and $\xi_{nt}=
			-\xi_{n(q_{n}-t)}$ for all $1\leq t < q_n$.
			\label{it:propsm1}
				
		\item $s_{nt} > s_{n0}$ for all $1\leq t < q_n$ if $n$ is 
			sufficiently large.\label{it:propsm2}
	\end{enumerate}
\end{lem}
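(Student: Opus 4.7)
For part (a), the plan is to exploit the symmetry $t \mapsto q_n - t$ using the fact that $\gcd(q_{n-1},q_n)=1$. First I would observe that for any $1 \le t < q_n$, the number $tq_{n-1}/q_n$ is not an integer, and hence
$$
\left\{\frac{(q_n-t)q_{n-1}}{q_n}\right\} = 1 - \left\{\frac{tq_{n-1}}{q_n}\right\},
$$
which immediately gives $\xi_{n(q_n-t)} = -\xi_{nt}$. The identity for $s_{nt}$ then follows from $\sin(\pi - x) = \sin(x)$ applied to the argument $\pi[(q_n-t)/q_n - |\Lambda_n|\xi_{n(q_n-t)}] = \pi - \pi[t/q_n - |\Lambda_n|\xi_{nt}]$. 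The identity for $h_{nt}$ uses $\cot(\pi - x) = -\cot(x)$ together with oddness of $\sin$, so the two sign flips cancel.

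For part (b), the strategy is to reduce to $1 \le t \le M_n := \lfloor(q_n-1)/2\rfloor$ via part (a), and then to show that the argument of the sine defining $s_{nt}$ lies strictly between $\pi|\Lambda_n|/2$ and $\pi/2$, where $\sin$ is strictly increasing. Concretely, I would use $|\xi_{nt}| \le 1/2$ together with the bound $|\Lambda_n| < 1/q_{n+1} \le 1/q_n$ from Lemma~\ref{lem:PropLambda}\ref{it:LambdaSec} to get
$$
\frac{|\Lambda_n|}{2} < \frac{1}{q_n} - \frac{|\Lambda_n|}{2} \le \frac{t}{q_n} - |\Lambda_n|\xi_{nt} \le \frac{M_n}{q_n} + \frac{|\Lambda_n|}{2} \le \frac{1}{2},
$$
so $s_{nt} = 2\sin(\pi[\,t/q_n - |\Lambda_n|\xi_{nt}\,]) > 2\sin(\pi|\Lambda_n|/2) = s_{n0}$.

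The only subtlety is ensuring that the lower bound $1/q_n - |\Lambda_n|/2 > |\Lambda_n|/2$, i.e.\ $|\Lambda_n| < 1/q_n$, holds; this is exactly Lemma~\ref{lem:PropLambda}\ref{it:LambdaSec}, which holds for all $n$, so in fact ``sufficiently large $n$'' can be taken to mean any $n$ for which the estimates above are valid. The case $t = q_n/2$ (possible only when $q_n$ is even) is handled directly: since $\gcd(q_{n-1},q_n) = 1$ forces $q_{n-1}$ odd, we have $\{tq_{n-1}/q_n\} = 1/2$, so $\xi_{nt} = 0$ and $s_{nt} = 2\sin(\pi/2) = 2 > s_{n0}$. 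I do not anticipate a real obstacle in this proof; the only point requiring some care is bookkeeping around the two sign changes in the $h_{nt}$ identity and verifying the edge case $t = q_n/2$.
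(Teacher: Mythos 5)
Your proof is correct and takes essentially the same route as the paper: part (a) via $\{-x\}=1-\{x\}$ combined with $\sin(\pi-x)=\sin x$ and $\cot(\pi-x)=-\cot x$, and part (b) by reducing to the first half of the range via (a) and exploiting $|\xi_{nt}|\le 1/2$, $|\Lambda_n|<1/q_n$ and the monotonicity of $\sin$ on $[0,\pi/2]$. The only cosmetic difference is that you bound the sine argument uniformly (handling $t=q_n/2$ separately), whereas the paper runs a term-by-term chain $s_{n(t-1)}<s_{nt}$; your version makes explicit that the conclusion in fact holds for every $n$, not just sufficiently large $n$.
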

\begin{proof}
	We first verify \ref{it:propsm1}. The fact that $\{-x\}= 1-\{x\}$ for $x\in \mbb{R}\setminus
	\mbb{Z}$ immediately implies $\xi_{nt}=-\xi_{n(q_n-t)}$. Combining this with $\sin(\pi - x) = 
	\sin x$, we get
	\begin{align*}
		s_{n(q_n-t)} &= 2 \sin \left(\pi\left[\frac{q_n-t}{q_n} - \xi_{n(q_n-t)}|\Lambda_{n}| \right]\right) \\
		&= 2 \sin \left(\pi\left[ 1- \left( \frac{t}{q_n}- \xi_{nt} |\Lambda_{n}| \right) \right]\right) = 
		s_{nt},
	\end{align*}
	and likewise since $\cot(\pi-x) =-\cot( x)$ and $\sin(x)$ is an odd function, we immediately get by using $\xi_{nt}=-\xi_{n(q_n-t)}$ that $h_{n(q_n-t)}=h_{nt}$
	for every $t \in \{1, \ldots , q_n-1\}$. Now let us verify \ref{it:propsm2}. In light of \ref{it:propsm1}, it is enough to verify 
	$s_{nt} > s_{n0}$ for $t \in \{1,2, \ldots ,  \lfloor q_n/2 \rfloor \}$.
	Writing again 
	$$s_{nt}=2 \sin \left(\pi\left[\frac{t}{q_n} - \xi_{nt}|\Lambda_{n}| \right]\right)$$
	and recalling that $|\xi_{nt}| < 1/2$ for these
	values of $t$, it is clear that for all $t \in\{2,\ldots,\lfloor q_n/2 \rfloor\}$ it follows $s_{nt} > s_{n(t-1)}$, and in particular for $t=1$
	\begin{equation*}
		s_{n1} > 2 \sin \left(\pi \left[ \frac{1}{q_n} -\frac{1}{2}|\Lambda_{n}| \right]\right) > 
		2\sin\left(	\frac{\pi |\Lambda_{n}|}{2}\right) = s_{n0},
	\end{equation*}
	since $|\Lambda_{n}|<1/q_{n+1}<1/q_n$ which was shown in Lemma~\ref{lem:PropLambda}.
\end{proof}
Now we are ready for the analysis of the factor $C_n$. It will be our next goal to prove the subsequent lemma. 

	\begin{lem}\label{lem:CnSeparate}
		Let $C_n$ be as in \eqref{eq:Cn}. For sufficiently large $n$ we have that
		\begin{equation}
			C_n=\prod_{t=1}^{\kappa_n} \left( 1- \frac{1}{4\left( t/ c_{n}-\xi_{n t} \right)^2}\right)+\calO\left(\kappa_n^{-1}\right),
		\end{equation}
		where $\kappa_n= \lfloor q_n^{1/2} \rfloor$.
	\end{lem}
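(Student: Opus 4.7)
The plan is to handle $C_n$ in three stages: use the pairing from Lemma~\ref{lem:propSequences}(a) to eliminate the square root, truncate at $t=\kappa_n$, and approximate the surviving factors via Taylor expansion of $\sin$. Under the involution $t\mapsto q_n-t$ on $\{1,\ldots,q_n-1\}$, the identity $s_{nt}=s_{n(q_n-t)}$ gives
\begin{equation*}
C_n \;=\; R_n \prod_{t=1}^{M_n}\left(1-\frac{s_{n0}^2}{s_{nt}^2}\right), \qquad M_n=\lfloor(q_n-1)/2\rfloor,
\end{equation*}
where $R_n=1$ if $q_n$ is odd and $R_n=(1-s_{n0}^2/s_{n,q_n/2}^2)^{1/2}$ if $q_n$ is even. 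Since $s_{n,q_n/2}\to 2$ while $s_{n0}=\calO(|\Lambda_n|)=\calO(q_n^{-1})$, we have $R_n=1+\calO(q_n^{-2})$ in both cases.

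Next, split the remaining product at $\kappa_n$. For the tail, the elementary inequality $\sin(\pi x)\ge 2x$ on $[0,1/2]$ yields $s_{nt}\ge 2t/q_n$ for $1\le t\le M_n$. Combined with $s_{n0}\le \pi|\Lambda_n|$ and $|\Lambda_n|q_n=c_n\le 1$ (the last using $\alpha_n^+\ge 1$ together with Lemma~\ref{lem:PropLambda}(c)), this gives $s_{n0}^2/s_{nt}^2\le \pi^2/(4t^2)$, hence $\sum_{t=\kappa_n+1}^{M_n}s_{n0}^2/s_{nt}^2=\calO(\kappa_n^{-1})$, and Lemma~\ref{lem:ProductSumLemma} delivers
\begin{equation*}
\prod_{t=\kappa_n+1}^{M_n}\left(1-\frac{s_{n0}^2}{s_{nt}^2}\right) \;=\; 1+\calO(\kappa_n^{-1}).
\end{equation*}

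For the head product, when $1\le t\le \kappa_n$ the arguments of $s_{nt}$ and $s_{n0}$ are both $\calO(q_n^{-1/2})$, so the expansion $\sin x=x+\calO(x^3)$ combined with Lemma~\ref{lem:PropLambda}(c) yields
\begin{equation*}
\frac{s_{n0}^2}{s_{nt}^2} \;=\; \frac{1}{4(t/c_n-\xi_{nt})^2}\bigl(1+\calO(q_n^{-1})\bigr).
\end{equation*}
The key bound $|t/c_n-\xi_{nt}|\ge t/(2c_n)$, valid because $c_n\le 1$ and $|\xi_{nt}|\le 1/2$, promotes this to the absolute estimate $s_{n0}^2/s_{nt}^2=\tfrac{1}{4(t/c_n-\xi_{nt})^2}+\calO(q_n^{-2})$ uniformly in $t$. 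Since both factor sequences lie in $[0,1]$, writing $a_t=1-s_{n0}^2/s_{nt}^2$ and $b_t=1-1/(4(t/c_n-\xi_{nt})^2)$, the telescoping identity
\begin{equation*}
\left|\prod_{t=1}^{\kappa_n}a_t-\prod_{t=1}^{\kappa_n}b_t\right|\;\le\;\sum_{t=1}^{\kappa_n}|a_t-b_t|
\end{equation*}
bounds the head approximation error by $\calO(\kappa_n q_n^{-2})=\calO(\kappa_n^{-3})$. Multiplying the three pieces together and using that the target product $\prod_{t=1}^{\kappa_n}b_t$ has absolute value at most $1$ absorbs every error into a single $\calO(\kappa_n^{-1})$, which is precisely the claim.

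The main obstacle is the uniformity in step three: the multiplicative Taylor error $\calO(t^2/q_n^2)$ would blow up in absolute terms if $(t/c_n-\xi_{nt})^2$ were allowed to become small. The lower bound $|t/c_n-\xi_{nt}|\ge t/(2c_n)$, itself a consequence of $\alpha_n^+\ge 1$, is precisely what cancels the dangerous factor and renders the absolute approximation error uniform in both $t$ and $n$.
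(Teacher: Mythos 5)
Your proposal is correct and follows essentially the same route as the paper: split the product at $\kappa_n=\lfloor q_n^{1/2}\rfloor$, control the tail via the elementary bound $\sin(\pi x)\ge 2x$ together with Lemma~\ref{lem:ProductSumLemma}, and approximate the head factors by $1-\tfrac{1}{4(t/c_n-\xi_{nt})^2}$ through the Taylor expansion of the sine and Lemma~\ref{lem:PropLambda}. The only differences are cosmetic bookkeeping — you invoke the symmetry of Lemma~\ref{lem:propSequences} explicitly to remove the square root and compare the head products by telescoping, where the paper factors out the relative error as a second product handled again by Lemma~\ref{lem:ProductSumLemma}; note also that to get your claimed uniform $\calO(q_n^{-2})$ error you should keep the relative error in the form $\calO(t^2q_n^{-2})$ and use the $t^{-2}$ decay of the main term, though even the cruder $\calO(q_n^{-1}t^{-2})$ bound suffices for the stated $\calO(\kappa_n^{-1})$.
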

	\begin{rem}\label{rem:differentChoiceKappa}
		Note that Lemma~\ref{lem:CnSeparate} is actually slightly weaker than the statement in Lemma~\ref{lem:StructurePqn} since we set $\kappa_n= \lfloor q_n^{1/2} \rfloor$. The more general version (i.e. $\kappa_n=\calO(q^{1/2}_n)$)  follows by the exact same proof steps. But the choice $\kappa_n= \lfloor q_n^{1/2} \rfloor$ will increase the readability of the subsequent proof.
	\end{rem}
	
\begin{proof}
We begin by developing estimates for $s_{n0}/s_{nt}$, $t\in\{1,\ldots,q_{n}-1\}$. If $n$ is sufficiently large it follows by using $\sin(x)=x(1+\calO(x^2))$ that
\begin{equation*}
s_{n0} = 2 \sin \left( \pi |\Lambda_{n}|/2 \right) = \pi |\Lambda_{n}| \left( 1+\mathcal{O}(\Lambda_{n}^{2})\right).
\end{equation*}
For $t\geq 1$ we have that
\begin{equation}\label{eq:smapprox}
\begin{aligned}
s_{nt} &= 2 \sin \left(\pi \left[ \frac{t}{q_{n}} - |\Lambda_{n}|\xi_{nt} \right]\right) = 2 \sin \left(\pi t |\Lambda_{n}| \left[ \frac{1}{c_n} - \frac{\xi_{nt}}{t}\right]\right),
\end{aligned}
\end{equation}
where we have used Lemma~\ref{lem:PropLambda}. We now split the values of $t$ at 
\begin{equation}\label{eq:DefKappan}
	\kappa_n := \floor{q_n^{1/2}},
\end{equation}	
and treat $t \leq \kappa_n$ and $t > \kappa_n$ separately in order to find appropriate bounds on $s_{nt}$ in \eqref{eq:smapprox}. For $t> \kappa_n$, we apply $\sin(x) \geq 2x/\pi$ for $x \in [0, \pi/2]$ to obtain
\begin{equation*}
s_{nt} \geq 4t|\Lambda_{n}| \left(\frac{1}{c_n}  - \frac{\xi_{n t}}{t} \right)\geq 4t\frac{|\Lambda_{n}|}{c_n} \left(1  - \frac{|\xi_{n t}|c_n}{t} \right) .
\end{equation*}
Note that it follows from Lemma~\ref{lem:PropLambda} that $0< c_n < 1$ and recall that  $|\xi_{n t}| < 1/2$. Thus, for sufficiently large $n$ (and thereby sufficiently large $t$), we have $s_{nt} > 2t |\Lambda_{n}|/c_n$ and 
\begin{equation*}
\frac{s_{n0}}{s_{nt}} \leq \frac{\pi c_n|\Lambda_{n}|\left( 1+\mathcal{O}(\Lambda_{n}^{2}) \right)}{2t|\Lambda_{n}|} = \frac{\pi c_n}{2t} \left( 1+\mathcal{O}(\Lambda_{n}^{2}) \right).
\end{equation*}
Further it follows that 
\begin{equation*}
\sum_{t=\kappa_n+1}^{q_n-1} \frac{s_{n0}^2}{s_{nt}^2} \leq \frac{\pi^2c_n^2}{4}(1+\calO(\Lambda_{n}^2))\sum_{t=\kappa_n+1}^{q_n-1}\frac{1}{t^2} = \calO(\kappa_n^{-1}),
\end{equation*}
and accordingly this sum is convergent and smaller than one for sufficiently large $n$. 
Thus, by Lemma \ref{lem:ProductSumLemma} and for sufficiently large $n$ we get that
\begin{equation}
\label{eq:largeetabound}
1 \geq \prod_{t=\kappa_n+1}^{q_n-1} \left( 1-\frac{s_{n0}^2}{s_{nt}^2} \right)^{1/2} >\left(1- \sum_{t=\kappa_n+1}^{q_n-1} \frac{s_{n0}^2}{s_{nt}^2}\right)^{1/2} = 1-\mathcal{O}(\kappa_n^{-1}),
\end{equation}
where we used the fact that $\sqrt{1-x}\geq 1-x$ for $x\in(0,1)$. Now consider $t \leq \kappa_n.$ 
It is clear from \eqref{eq:smapprox} that by choosing $n$ sufficiently large, the argument in the sine function $s_{nt}$ can be made arbitrarily small in this case. Applying $\sin(x) = x+\mathcal{O}(x^3)$, we get
\begin{align*}
s_{nt} &= 2\sin\left(\pi\left[\frac{t}{q_{n}} - |\Lambda_{n}|\xi_{nt} \right]\right) =2\pi|\Lambda_{n}|\left(\frac{t}{c_n} - \xi_{nt}
		\right) + \calO(t^3q_n^{-3})\\
	&=\pi |\Lambda_{n}| u_{n}(t)\left(1 + \calO(t^2q_n^{-2})\right)=\pi |\Lambda_{n}| u_{n}(t)\left(1 + \calO(q_n^{-1})\right),
\end{align*}
where we have used that $t\leq \kappa_n\leq q_n^{1/2}$ and $(|\Lambda_n|u_t)^{-1}=\calO(t^{-1}q_n) $. Moreover, we introduced the notation
\begin{equation}
\label{eq:ut}
u_{n}(t) = 2 \left( \frac{t}{c_n} - \xi_{n t} \right) = 2 \left( \frac{t}{c_n}-\{t\alpha_{n}^-\}+\frac{1}{2} \right).
\end{equation}
Thus,
\begin{equation*}
\frac{s_{n0}}{s_{nt}} = \frac{\pi |\Lambda_{n}| \left( 1+\mathcal{O}(\Lambda_{n}^{2})\right)}{\pi |\Lambda_{n}| u_{n}(t)\left(1 + \mathcal{O}(q_n^{-1})\right)} =\frac{1+\mathcal{O}(q_n^{-1})}{u_{n}(t)}
\end{equation*}
and further
\begin{align}
\prod_{t=1}^{\kappa_n}\left( 1-\frac{s_{n0}^2}{s_{nt}^2} \right) 
&= \prod_{t=1}^{\kappa_n} \left( 1-\frac{1}{u_{n}(t)^2}\right) \prod_{t=1}^{\kappa_n} \left( 1- \frac{
	\mathcal{O}(q_n^{-1})}{u_{n}(t)^2-1}\right).\label{eq:CntsmallerKappaPart1}
\end{align}
Let us have a closer look at the two products on the final line above. Since $|\xi_{n t}|<1/2$ and $c_n<1$, we see from \eqref{eq:ut} that $u_n(t) >1$ for all $1 \leq t \leq \kappa_n$. Therefore both products are well-defined and we see that 
$$\sum_{t=1}^{\kappa_n}\frac{1}{u_{n}(t)^2-1} \leq \sum_{t=1}^{\infty}\frac{1}{4(t/c_n-1/2)^2-1}<\infty.$$
Hence, $\sum_t \mathcal{O}(q_n^{-1})/(u_{n}(t)^2-1) = \mathcal{O}(q_n^{-1})$. The latter sum is thus smaller than one, provided $n$ is sufficiently large, and again it follows from Lemma~\ref{lem:ProductSumLemma} that
\begin{equation}
\label{eq:CntsmallerKappaPart2}
1 > \prod_{t=1}^{\kappa_n} \left( 1- \frac{\mathcal{O}(q_n^{-1})}{u_{n}(t)^2-1}\right) \geq 1- \sum_{t=1}^{\kappa_n} \frac{\mathcal{O}(q_n^{-1})}{u_{n}(t)^2-1}  = 1- \mathcal{O}(q_n^{-1}).
\end{equation}
Combining the estimates \eqref{eq:largeetabound} with \eqref{eq:CntsmallerKappaPart1} and  \eqref{eq:CntsmallerKappaPart2} we obtain that 
\begin{align*}
C_n &= \prod_{t=1}^{\kappa_n}\left( 1- \frac{1}{u_{n}(t)^2} \right) \cdot \prod_{t=1}^{\kappa_n} \left( 1- \frac{\mathcal{O}(q_n^{-1})}{u_{n}(t)^2-1}\right) \cdot \prod_{t=\kappa_n+1}^{(q_n-1)/2} \left( 1- \frac{s_{n0}^2}{s_{nt}^2} \right)\\
&=\prod_{t=1}^{\kappa_n}\left( 1- \frac{1}{u_{n}(t)^2} \right) + \calO(\kappa_n^{-1}).
\end{align*}
This completes the proof of Lemma~\ref{lem:CnSeparate}.
\end{proof} 

We continue with the factor $B_n$. What remains to be shown for Lemma\,\ref{lem:StructurePqn} is the following statement.
\begin{lem}\label{lem:BnSeparate}
	Let $B_n$ be as in \eqref{eq:Bn}, $M_n=\floor{(q_n-1)/2}$. Then for sufficiently large $n$ we have that
	\begin{align}
		\log(B_n) =& -2 \pi^2\frac{c_n}{q_n^2}\sum_{t=1}^{M_n-1}\frac{D_t(\alpha_n^-)}{\sin(\pi t/q_n)\sin(\pi(t+1)/q_n)} \nonumber\\
		&-2\sum_{t=1}^{\tau_n}\sum_{j=2}^{\tau_n} \frac{1}{j}\left(\frac{c_{n}\xi_{nt}}{t}\right)^j 	+\calO\left(\tau_n^{-1}\right),
	\end{align}
	where $\alpha_n^-$ and $D_t(\alpha_n^-)$ are given in \eqref{eq:defAlphPlusAlphMinus} and \eqref{eq:DefDt}, respectively and $\tau_n=\lfloor q_n^{1/2} \rfloor$.
\end{lem}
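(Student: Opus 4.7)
Starting from $B_n = \prod_{t=1}^{q_n-1}|s_{nt}/(2\sin(\pi t/q_n))|$, the first step is to apply the sine subtraction formula to $s_{nt} = 2\sin(\pi t/q_n - \pi\xi_{nt}|\Lambda_n|)$ to obtain the clean identity $s_{nt}/(2\sin(\pi t/q_n)) = \cos(\pi\xi_{nt}|\Lambda_n|) - h_{nt}$, where $h_{nt}$ is the sequence defined in \eqref{eq:hnt}. The symmetry $s_{nt}=s_{n(q_n-t)}$ from Lemma~\ref{lem:propSequences}(a) pairs factors into identical copies, giving $\log B_n = 2\sum_{t=1}^{M_n}\log|\cos(\pi\xi_{nt}|\Lambda_n|)-h_{nt}|+\calO(q_n^{-2})$, where the error absorbs the unpaired middle term when $q_n$ is even. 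Since $\pi|\xi_{nt}||\Lambda_n|=\calO(q_n^{-1})$ uniformly by Lemma~\ref{lem:PropLambda} and $|h_{nt}|\le \pi/(4t)\le \pi/4$ (from $\sin x\ge 2x/\pi$ and $|\Lambda_n|\le 1/q_{n+1}$), replacing $\cos(\pi\xi_{nt}|\Lambda_n|)$ by $1$ costs only $\calO(\tau_n^{-2})$, and the logarithm expands as $\log(1-h_{nt})=-\sum_{j\ge 1}h_{nt}^j/j$.

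Next, split the $t$-sum at $\tau_n$. For $t>\tau_n$, the $j\ge 2$ contribution is dominated by $h_{nt}^2=\calO(t^{-2})$ and sums to $\calO(\tau_n^{-1})$; for $t\le\tau_n$, the geometric $j$-tail beyond $\tau_n$ is bounded by $\calO(\tau_n(\pi/4)^{\tau_n})=o(\tau_n^{-1})$. The task reduces to analyzing the linear piece $-2\sum_{t=1}^{M_n}h_{nt}$ and the truncated higher-order piece $-2\sum_{t=1}^{\tau_n}\sum_{j=2}^{\tau_n}h_{nt}^j/j$.

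For the higher-order piece, combining $\cot x=1/x+\calO(x)$, $\sin y=y+\calO(y^3)$, and $|\Lambda_n|=c_n/q_n$ gives $h_{nt}=c_n\xi_{nt}/t+\calO(q_n^{-3/2})$ for $t\le\tau_n$. Using the telescoping identity $a^j-b^j=(a-b)\sum_{i=0}^{j-1}a^i b^{j-1-i}$ together with $\max(|h_{nt}|,|c_n\xi_{nt}/t|)\le \pi/(4t)$, the cost of replacing $h_{nt}$ by $c_n\xi_{nt}/t$ in every term totals $\calO(q_n^{-3/2}\log\tau_n)\subset \calO(\tau_n^{-1})$, leaving exactly the announced sum $-2\sum_{t=1}^{\tau_n}\sum_{j=2}^{\tau_n}(c_n\xi_{nt}/t)^j/j$.

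For the linear piece, the same expansions give $h_{nt}=\pi|\Lambda_n|\xi_{nt}\cot(\pi t/q_n)(1+\calO(\Lambda_n^2))$ uniformly on $1\le t\le M_n$, with total replacement error $\calO(q_n^{-2}\log q_n)$. The decisive step is Abel summation with partial sums $\sum_{s=1}^{t}\xi_{ns}=D_t(\alpha_n^-)$ (valid since $\alpha_n^-=q_{n-1}/q_n$ and $\xi_{nt}=\{t\alpha_n^-\}-1/2$); combined with the identity $\cot A-\cot B=\sin(B-A)/(\sin A\sin B)$ this yields
\[
\sum_{t=1}^{M_n}\xi_{nt}\cot(\pi t/q_n)=D_{M_n}(\alpha_n^-)\cot(\pi M_n/q_n)+\sin(\pi/q_n)\sum_{t=1}^{M_n-1}\frac{D_t(\alpha_n^-)}{\sin(\pi t/q_n)\sin(\pi(t+1)/q_n)}.
\]
The boundary term is $\calO(q_n^{-2})$ after multiplication by $|\Lambda_n|$, since $\cot(\pi M_n/q_n)=\calO(q_n^{-1})$ and trivially $|D_{M_n}(\alpha_n^-)|\le q_n$, while $|\Lambda_n|\sin(\pi/q_n)=\pi c_n/q_n^2+\calO(q_n^{-4})$ converts the remaining sum into precisely the first term $-2\pi^2(c_n/q_n^2)\sum_{t=1}^{M_n-1}D_t(\alpha_n^-)/[\sin(\pi t/q_n)\sin(\pi(t+1)/q_n)]$ of the lemma. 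The main obstacle is the careful bookkeeping of error terms: one must verify that the replacements $\cos\to 1$ and $h_{nt}\to c_n\xi_{nt}/t$, the $j$-truncation, and the Abel boundary contribution all consolidate into a single $\calO(\tau_n^{-1})$ remainder, which is delicate because $\cot(\pi t/q_n)$ is large for small $t$ and only the crude bound $|D_{M_n}(\alpha_n^-)|\le q_n$ is available in general.
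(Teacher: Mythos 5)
Your argument is correct and follows essentially the same route as the paper: the same pairing and reduction to $\prod_{t\le M_n}(1-h_{nt})^2$ (discarding the $\cos$/$\beta_{nt}$ correction at cost $\calO(|\Lambda_n|)$), the same split into a linear piece and a higher-order piece truncated at $\tau_n$ with the replacement $h_{nt}\to c_n\xi_{nt}/t$, and the same Abel summation with the cotangent-difference identity yielding the $D_t(\alpha_n^-)/(\sin(\pi t/q_n)\sin(\pi(t+1)/q_n))$ sum — the paper merely sums by parts before linearizing $\sin(\pi|\Lambda_n|\xi_{ns})$, whereas you linearize first. The only blemish is the boundary term: with the bounds you cite ($|D_{M_n}(\alpha_n^-)|\le q_n$, $\cot(\pi M_n/q_n)=\calO(q_n^{-1})$, factor $|\Lambda_n|$) it is $\calO(q_n^{-1})$ rather than $\calO(q_n^{-2})$, but this is still absorbed into the $\calO(\tau_n^{-1})$ remainder, so nothing is lost.
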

\begin{rem}\label{rem:ChoiceOftaun}
	Similar as for the factor $C_n$ and Lemma~\ref{lem:BnSeparate} we again prove a slightly simpler version ($\tau_n=\lfloor q_n^{1/2} \rfloor$) compared to the statement in Lemma~\ref{lem:StructurePqn} ($\tau_n=\calO(q_n^{1/2})$). Again the more general version of Lemma~\ref{lem:StructurePqn} can be proved by following the exact same steps as in the subsequent proof.
\end{rem}
\begin{proof}
Recall that we have
\begin{equation*}
B_n = \left| \prod_{t=1}^{q_{n}-1} \frac{s_{nt}}{2 \sin (\pi t / q_{n})} \right|.
\end{equation*}
We begin by examining each term of the product $B_n$. Recalling the definition of $s_{nt}$ from \eqref{eq:snt}, we have
\begin{align*}
\frac{s_{nt}}{2\sin (\pi t/q_n)} &= \cos (\pi |\Lambda_{n}| \xi_{nt}) - \cot (\pi t/q_n) \sin \left(\pi |\Lambda_{n}| \xi_{nt}\right) \\
&= 1- 2\sin^2(\pi |\Lambda_{n}| \xi_{nt}/2) - h_{nt},
\end{align*}
with $h_{nt}$ given in \eqref{eq:hnt}. Taking $\beta_{nt}:= 2\sin^2(\pi |\Lambda_{n}| \xi_{nt}/2)$, it is easily verified that $\beta_{n(q_n-t)} = \beta_{nt}$ for $t \in \{1, \ldots , q_n-1 \}$. Likewise, we recall from Lemma~\ref{lem:propSequences}~\ref{it:propsm1} that $h_{n(q_n-t)}=h_{nt}$, and thus
\begin{equation*}
B_n = \prod_{t=1}^{q_n-1} (1-\beta_{nt}-h_{nt}) = \theta_n\prod_{t=1}^{\floor{(q_n-1)/2}} (1-\beta_{nt}-h_{nt})^2,
\end{equation*}
where $\theta_n=1$ if $q_n$ is odd and $\theta_n=1-\beta_{n(q_n/2)}$ otherwise. (Note that $h_{n(q_n/2)}=0$.)
This shows that we only need to consider $t \in \{1, \ldots , \lfloor(q_n-1)/2\rfloor\}$. 

Let us now show that rather than analyzing $B_n$, we may choose to analyze the simpler product
\begin{equation}
\label{eq:Bstar}
B_n^* := \prod_{t=1}^{q_n-1} (1-h_{nt}) = \prod_{t=1}^{\floor{(q_n-1)/2}} (1-h_{nt})^2.
\end{equation}
It can be shown completely analogously to \cite[p.\,29]{GN18} that we have

\begin{equation}\label{eq:Bnstar}
\left| \log (B_n) - \log (B_n^*) \right| = \left|\log(\theta_n)+\calO(q_n\Lambda_n^2)\right| = \mathcal{O}(|\Lambda_{n}|),
\end{equation}
where we have additionally used that $|\log(\theta_n)|\leq \beta_{n(q_n/2}) \leq \pi^2/8 \Lambda_n^2$ and $|\Lambda_n|q_n =c_n\leq1$.
 Thus $\lim_{n \to \infty} \log (B_n) = \lim_{n \to \infty} \log (B_n^*)$. This confirms that we may choose to analyze $B_n^*$ in \eqref{eq:Bstar} rather than $B_n$. 

Finally,  we set $M_n:=\floor{(q_n-1)/2}$ and rewrite $\log (B_n^*)$ using its Taylor expansion as
\begin{equation}
\label{eq:Hmsplit}
\begin{aligned}
\log (B_n^*) &= 2 \sum_{t=1}^{M_n} \log (1-h_{nt}) = -2 \sum_{t=1}^{M_n} \sum_{j=1}^{\infty} \frac{1}{j} h_{nt}^j \\
&=-2 \left( \sum_{t=1}^{M_n} h_{nt} + \sum_{t=1}^{M_n} \sum_{j=2}^{\infty} \frac{1}{j}h_{nt}^j \right) =: -2(H_n^{(1)}+H_n^{(2)}).
\end{aligned}
\end{equation}
We continue by investigating the sums $H_n^{(1)}$ and $H_n^{(2)}$ separately in the following subsections.

\ \\
\noindent
\textbf{Analysis of $H_n^{(2)}$}:\\
We first treat the sum
\begin{equation*}
H_n^{(2)} =  \sum_{t=1}^{M_n} \sum_{j=2}^{\infty} \frac{1}{j}h_{nt}^j .
\end{equation*}
Let us start by proving that terms where $t$ or $j$ is greater than some $\tau_n$ will not contribute significantly to $H_n^{(2)}$. Note that by applying $\cot (x) = 1/x(1+ \calO(x^2))$ and $\sin(x) =x(1+\calO(x^2))$ we obtain for sufficiently large $n$
\begin{equation}\label{eq:hntBound}
	h_{nt} = \cot (\pi t /q_{n}) \sin (\pi |\Lambda_{n}| \xi_{nt}) = \frac{c_{n}\xi_{nt}}{t}(1+\calO(t^2q_n^{-2}))
\end{equation}
and if we use the rougher bounds $\cot(x)<1/x$ and $\sin(x)<x$ we get for sufficiently large $n$ that
\begin{equation}\label{eq:hntBoundRough}
	h_{nt} = \cot (\pi t /q_{n}) \sin (\pi |\Lambda_{n}| \xi_{nt}) \leq \frac{c_{n}\xi_{nt}}{t}<\frac{1}{2t}.
\end{equation}
Now for $J\geq 2$ we obtain by \eqref{eq:hntBoundRough}
\begin{equation}\label{eq:HelpSum}
\left|\sum_{j=J}^{\infty}\frac{1}{j}h^j_{nt}\right|\leq\sum_{j=J}^{\infty}|h^j_{nt}| =
		\frac{|h^J_{nt}|}{1-|h_{nt}|}<2\left(\frac{1}{2t}\right)^J.
\end{equation}
Further we set 
\begin{equation}\label{eq:DefTaun}
	\tau_n=\lfloor q_n^{1/2} \rfloor
\end{equation}
and therefore we have $1 \leq \tau_n \leq \floor{(q_n-1)/2}=M_n$ if $n$ is sufficiently large. We then get by \eqref{eq:HelpSum}
\begin{equation*}
	\left|\sum_{t=\tau_n+1}^{M_n}\sum_{j=2}^{\infty}\frac{1}{j}h^j_{nt}\right| <\sum_{t=\tau_n+1}^
		{M_n} 2\left(\frac{1}{2t}\right)^2 < \frac{1}{2}\sum_{t=\tau_n+1}^{\infty}\frac{1}{t^2} 
		< \frac{1}{2\tau_n}
\end{equation*}
and again by \eqref{eq:HelpSum}
\begin{equation*}
	\left|\sum_{t=1}^{\tau_n}\sum_{j=\tau_n+1}^{\infty}\frac{1}{j}h^j_{nt}\right| < \sum_{t=1}^{\tau_n}
		2\left(\frac{1}{2t}\right)^{\tau_n+1} < \left(\frac{1}{2}\right)^{\tau_n} \sum_{t=1}^{\infty}
		\frac{1}{t^2} = \frac{\pi^2}{6 \cdot 2^{\tau_n}}.
\end{equation*}
Both of these sums are at most of order $\mathcal{O}(\tau_n^{-1})$, and it follows by \eqref{eq:hntBound}
\begin{align}
	H_n^{(2)} &=  \sum_{t=1}^{\tau_n}\sum_{j=2}^{\tau_n} \frac{1}{j}h^j_{nt} + \sum_{t=1}^{\tau_n}\sum_{j=\tau_n+1}^{\infty}\frac{1}{j}h^j_{nt} + \sum_{t=\tau_n+1}^{M_n}\sum_{j=2}^{\infty}	\frac{1}{j}h^j_{nt}\nonumber\\
	&= \sum_{t=1}^{\tau_n}\sum_{j=2}^{\tau_n} \frac{1}{j}h^j_{nt} + \mathcal{O}(\tau_n^{-1})\nonumber\\
	&= \sum_{t=1}^{\tau_n}\sum_{j=2}^{\tau_n} \frac{1}{j}\left(\frac{c_n\xi_{nt}}{t}\right)^j\left(1+\calO(t^2q_n^{-2})\right)^j + \mathcal{O}(\tau_n^{-1}).
	\end{align}
	Now observe that for some $x >0$
	\begin{equation}\label{eq:helpBound}
	(1+x)^j =1+ \sum_{i=1}^j \binom{j}{i} x^{i} \leq  1+ 2^j x \frac{1-x^j}{1-x}.
	\end{equation}
	Moreover, note that by $t^2<\tau_n^2<q_n^{-1}$ it follows for sufficiently large $n$ that $\calO(t^2q_n^{-2})<1$ and by \eqref{eq:helpBound} that $(1+\calO(t^2q_n^{-2}))^j= 1+ \calO(2^jt^2q_n^{-2})$. Hence,
	\begin{align}
	H_n^{(2)} &=\sum_{t=1}^{\tau_n}\sum_{j=2}^{\tau_n} \frac{1}{j}\left(\frac{c_n\xi_{nt}}{t}\right)^j+\calO\left(\sum_{t=1}^{\tau_n}\sum_{j=2}^{\tau_n} \frac{1}{j}\left(\frac{c_n\xi_{nt}}{t}\right)^j2^jt^2q_n^{-2}\right) + \mathcal{O}(\tau_n^{-1})\nonumber\\
	&=\sum_{t=1}^{\tau_n}\sum_{j=2}^{\tau_n} \frac{1}{j}\left(\frac{c_n\xi_{nt}}{t}\right)^j+\calO\bigg(q_n^{-2}\underbrace{\sum_{t=1}^{\tau_n}\sum_{j=2}^{\tau_n}t^{2-j}}_{\leq 3\tau_n}\bigg) + \mathcal{O}(\tau_n^{-1})\nonumber\\
	&=\sum_{t=1}^{\tau_n}\sum_{j=2}^{\tau_n} \frac{1}{j}\left(\frac{c_n\xi_{nt}}{t}\right)^j+\mathcal{O}(\tau_n^{-1})\label{eq:StructureHn2},
\end{align}
where we have used that $|c_n\xi_n{t}| \leq 1/2$ and $\tau_n\leq  q^{1/2}_n$.
Let us now move on to the analysis of $H_n^{(1)}$.


\ \\
\noindent
\textbf{Analysis of $H_n^{(1)}$}:\\
We are left with investigating the behaviour of 
\begin{equation*}
H_n^{(1)} = \sum_{t=1}^{M_n} h_{nt} = \sum_{t=1}^{M_n}  \cot \left( \frac{\pi t}{q_n}\right) \sin (\pi |\Lambda_{n}| \xi_{n t}).
\end{equation*}
It follows by a summation by parts argument (see \cite[p.\,32/33]{GN18}) that

\begin{equation}
\label{eq:simCS}
H_n^{(1)} = \sum_{t=1}^{M_n-1} C_{nt} S_{nt} + \calO\left(q_n^{-1}\right),
\end{equation}
where we set $M_n=\floor{(q_n-1)/2}$ and
\begin{align*}
	&C_{nt}:= \cot(\pi t/q_n) - \cot (\pi(t+1)/q_n);\\
	&S_{nt}:=\sum_{s=1}^t \sin( \pi |\Lambda_{n}| \xi_{n s}).
\end{align*}
Let us investigate $C_{nt}$ closer. For ease of notation we write $\phi=\pi/q_n$. Then we have
\begin{align}
	0 < C_{nt} &= \frac{\sin((t+1)\phi)\cos(t\phi) - 
		\cos((t+1)\phi)\sin(t\phi)}{\sin(t\phi)\sin((t+1)\phi)} \nonumber\\
	&= \frac{\sin(\phi)}{\sin(t\phi)\sin((t+1)\phi)}=\frac{\phi \left(1+\calO\left(\phi^2\right)\right)}{\sin(t\phi)\sin((t+1)\phi)}.  \label{eq:BoundCnt}
\end{align}
Thus, by using again $\sin(x)=x(1+\calO(x^2))$ we obtain
\begin{align}\label{eq:CntSntAsymptotics}
	C_{nt}S_{nt} &= \frac{\pi }{q_n\sin(\pi t/q_n)\sin(\pi(t+1)/q_n)} \left(1+\calO\left(\phi^2\right)\right)\sum_{s=1}^t \sin( \pi |\Lambda_{n}| \xi_{n s})\nonumber\\
	&= \frac{\pi^2 |\Lambda_n| }{q_n\sin(\pi t/q_n)\sin(\pi(t+1)/q_n)} \left(1+\calO\left(q_n^{-2}\right)\right)\left(1+\calO(\Lambda_n^2)\right)\sum_{s=1}^t \xi_{n s}\nonumber\\
	&=  \frac{\pi^2 c_n}{q^2_n}\frac{ D_t(\alpha_n^{-}) }{\sin(\pi t/q_n)\sin(\pi(t+1)/q_n)} \left(1+\calO\left(q_n^{-2}\right)\right).
\end{align}
Note that we have used Lemma~\ref{lem:PropLambda} and \eqref{eq:DefDt} in the above calculations. Further, observe that $0<(t+1)/q_n <1/2$ for $t<M_n=\floor{(q_n-1)/2}$. Now it follows by combining the trivial bounds $|D_t(\alpha_n^-)|<t/2$, $c_n\leq 1$ and $\sin(x)\geq 2x/\pi$ for $x\in(0,\pi/2)$ with \eqref{eq:CntSntAsymptotics} that 
$$ \left| \frac{\pi^2 c_n}{q^2_n}\frac{ D_t(\alpha_n^{-}) }{\sin(\pi t/q_n)\sin(\pi(t+1)/q_n)}\right| =  \frac{\pi^2 c_n}{q^2_n}\frac{ \left|D_t(\alpha_n^{-})\right| }{\sin(\pi t/q_n)\sin(\pi(t+1)/q_n)} \leq \frac{\pi^2}{8t}.$$
Therefore it follows from \eqref{eq:CntSntAsymptotics}
\begin{equation}\label{eq:CntSntFinalAsymptotics}
	\sum_{t=1}^{M_n-1}C_{nt}S_{nt} =  \frac{\pi^2 c_n}{q_n^2}\sum_{t=1}^{M_n-1}\frac{ D_t(\alpha_n^{-}) }{\sin(\pi t/q_n)\sin(\pi(t+1)/q_n)} +\calO\left(\frac{\log(q_n)}{q_n^{2}}\right).
\end{equation}
Combining \eqref{eq:Hmsplit}, \eqref{eq:StructureHn2}, \eqref{eq:simCS} and \eqref{eq:CntSntFinalAsymptotics} finally gives for sufficiently large~$n$
\begin{align}\label{eq:BnFinal}
	\log(B_n)=& -2 \frac{\pi^2 c_n}{q_n^2}\sum_{t=1}^{M_n-1}\frac{ D_t(\alpha_n^{-}) }{\sin(\pi t/q_n)\sin(\pi(t+1)/q_n)} \nonumber\\
	&-2 \sum_{t=1}^{\tau_n}\sum_{j=2}^{\tau_n} \frac{1}{j}\left(\frac{c_n\xi_{nt}}{t}\right)^j
	+\mathcal{O}(\tau_n^{-1}).
\end{align}
This finishes the proof of Lemma~\ref{lem:BnSeparate}.
\end{proof}
The statement of Lemma~\ref{lem:StructurePqn} now follows by the combined assertions of Lemma~\ref{lem:AnSeparate}, Lemma~\ref{lem:CnSeparate} and Lemma~\ref{lem:BnSeparate}.


	\section{Proof of Theorem~\ref{thm:PqnBehaviour} and  Theorem~\ref{thm:ConvBoundedCFC}}\label{sec:ProofMainThm}
	In this final section we will give proofs for Theorem~\ref{thm:ConvBoundedCFC} and Theorem~\ref{thm:PqnBehaviour}, starting with the latter one. The proofs of Corollary~\ref{cor:PqnBehaviourTypeOfCFC} and Corollary~\ref{cor:behaviourEuler} are stated in this section as well.


	\begin{proof}[Proof of Theorem~\ref{thm:PqnBehaviour}]
	Lemma~\ref{lem:StructurePqn} already gives us detailed information concerning the structure of $P_{q_n}(\alpha)$ for an arbitrary irrational $\alpha$. Recall that we have
	\begin{equation}\label{eq:PqnEqualAnBnCn}
		P_{q_n}(\alpha) = A_nB_nC_n,
	\end{equation}
	where
	\begin{align*}
		A_n =& 2\pi c_{n}(1+\calO(\Lambda_{n}^2));\\
		\log(B_n) =& -2 \pi^2\frac{c_n}{q_n^2}\sum_{t=1}^{M_n-1}\frac{D_t(\alpha_n^-)}{\sin(\pi t/q_n)\sin(\pi(t+1)/q_n)} \\ 
		 &-2\sum_{t=1}^{\tau_n}\sum_{j=2}^{\tau_n} \frac{1}{j}\left(\frac{c_{n}\xi_{nt}}{t}\right)^j 	+\calO\left(\tau_n^{-1}\right);\\
		C_n = &\prod_{t=1}^{\kappa_n} \left( 1- \frac{1}{4\left( t/ c_{n}-\xi_{n t} \right)^2}\right)+\calO\left(\kappa_n^{-1}\right)
	\end{align*}
	and $(\kappa_n)_n{\in\NN}$ and $(\tau_n)_n{\in\NN}$ are eventually increasing sequences of natural numbers satisfying \eqref{eq:taunkappan}.
	
	Clearly we have that 
	\begin{equation}\label{eq:AnTheta}
	A_n=\Theta(c_n) \text{ for } n\to\infty.	
	\end{equation}
	For the factor $C_n$ recall that $1/c_n =(\alpha_n^+ + \alpha_n^-)\in (a_n,a_n+2)$ and $|\xi_{nt}| < 1/2.$ It is obvious that $C_n$ can be bounded from above since
	\begin{equation}\label{eq:CnUpperBound}
		 \prod_{t=1}^{\kappa_n} \left( 1- \frac{1}{4\left( t/ c_{n}-\xi_{n t} \right)^2}\right) \leq \prod_{t=1}^{\kappa_n} \left( 1- \frac{1}{\left( 2t(a_n+2)+1 \right)^2}\right) <1.
	\end{equation}
	For the lower bound note that 
	\begin{equation*}
		\sum_{t=2}^{\kappa_n}\frac{1}{4(t/c_n - \xi_{nt})^2} \leq \sum_{t=2}^{\kappa_n}
			\frac{1}{4(ta_n - \frac{1}{2})^2} \leq \sum_{t=2}^{\infty} \frac{1}{(2t-1)^2} <\frac{1}{4}.
	\end{equation*}
	Therefore we can make use of Lemma~\ref{lem:ProductSumLemma} and obtain that
	\begin{align}\label{eq:CnLowerBound}
		\prod_{t=1}^{\kappa_n} \left( 1- \frac{1}{4\left( t/ c_{n}-\xi_{n t} \right)^2}\right) &= \left(1-\frac{1}{4(1/c_n-\xi_{n1})^2}\right)\prod_{t=2}^{\kappa_n} \left( 1- \frac{1}{4\left( t/ c_{n}-\xi_{n t} \right)^2}\right)\nonumber\\
		&\geq\left(1-\frac{1}{(2a_n+1)^2}\right)\left( 1- \sum_{t=2}^{\infty}\frac{1}{4(t/c_n - \xi_{nt})^2}\right)\nonumber\\
		&> \frac{3}{4}\left(1-\frac{1}{(2a_n+1)^2}\right)\geq \frac{2}{3}.
	\end{align}
	Thus, by \eqref{eq:CnUpperBound} and \eqref{eq:CnLowerBound} we get that 
	\begin{equation}\label{eq:CnTheta}
		C_n=\Theta(1) \text{ for } n\to\infty. 
	\end{equation}
	Let us now investigate the remaining factor $B_n$. Observe that the double sum in \eqref{eq:logBn} is actually bounded. We have that
	\begin{align}\label{eq:BoundBnFirst}
		\left|\sum_{t=1}^{\tau_n}\sum_{j=2}^{\tau_n} \frac{1}{j}\left(\frac{c_{n}\xi_{nt}}{t}\right)^j\right| \leq \frac{1}{2}\sum_{t=1}^{\infty}\sum_{j=2}^{\infty} \frac{1}{(2t)^j}= \frac{1}{4}\sum_{t=1}^{\infty}\frac{1}{t(2t-1)}=\frac{\log(2)}{2}.
	\end{align}
	It follows by \eqref{eq:BoundBnFirst} and $\sin(x)=x\left(1+\calO(x^2)\right)$ that
	\begin{align}\label{eqLogBnUnboundedCFC}
		\log(B_n) &= -2 \pi^2\frac{c_n}{q^2_n}\sum_{t=1}^{M_n-1}\frac{D_t(\alpha_n^-)}{\sin(\pi t/q_n)\sin(\pi(t+1)/q_n)} + \calO(1) \nonumber\\
		&=-2 c_n\sum_{t=1}^{M_n-1}\frac{D_t(\alpha_n^-)}{t(t+1)} \left(1+\calO(t^2q_n^{-2})\right) + \calO(1) \nonumber\\
		&= -2 c_n\sum_{t=1}^{M_n-1}\frac{D_t(\alpha_n^-)}{t(t+1)} +\calO\left(1+ 2c_nq_n^{-2}\sum_{t=1}^{M_n-1}D_t(\alpha_n^-)\right)\nonumber\\
		&=-2 c_n\sum_{t=1}^{M_n-1}\frac{D_t(\alpha_n^-)}{t(t+1)} +\calO(1)\nonumber.
	\end{align}
	Hence,
	\begin{equation}\label{eq:BnTheta}
		B_n = \Theta\left(\exp\Big( - 2 c_n\sum_{t=1}^{M_n-1}\frac{D_t(\alpha_n^-)}{t(t+1)} \Big)\right) \text{ for } n \to \infty.
	\end{equation}
	Therefore we obtain from \eqref{eq:PqnEqualAnBnCn} together with \eqref{eq:AnTheta}, \eqref{eq:CnTheta} and \eqref{eq:BnTheta} that
	\begin{equation}\label{eq:PqnThetaBn}
		 P_{q_n}(\alpha) = \Theta\left(c_n\exp\Big( - 2 c_n\sum_{t=1}^{M_n-1}\frac{D_t(\alpha_n^-)}{t(t+1)} \Big)\right) \text{ for } n \to \infty.
	\end{equation}
	This finishes the proof of Theorem~\ref{thm:PqnBehaviour}.
	\end{proof}
	
	Now that we have established Theorem~\ref{thm:PqnBehaviour} we are ready to prove Corollary~\ref{cor:PqnBehaviourTypeOfCFC}.
	\begin{proof}[Proof of Corollary~\ref{cor:PqnBehaviourTypeOfCFC}]
	If $\alpha=[0;a_1,a_2,\ldots]$ has bounded c.f.c. then clearly we have
	\begin{equation}\label{eq:cnBoundedCFC}
		\frac{1}{A+2}\leq \frac{1}{a_n+2}< c_n < \frac{1}{a_n} \leq 1,
	\end{equation}
	where $A:=\sup_{n\in\NN}a_n$. Recall that for $\alpha_n^-=q_{n-1}(\alpha)/q_n(\alpha)$ it was pointed out in Section~\ref{sec:Prelim} that
	\begin{equation*}
		D_t(\alpha_n^-)=\calO(\log(t))
	\end{equation*}
	as long as $t\in\{1,\ldots,q_n(\alpha)-1\}$. Note that $M_n=\floor{(q_n(\alpha)-1)/2}<q_n(\alpha)$ and therefore
	\begin{equation}\label{eq:BoundSumDt}
		\left|\sum_{t=1}^{M_n-1}\frac{D_t(\alpha_n^-)}{t(t+1)}\right| \leq K\sum_{t=1}^{\infty}\frac{\log(t)}{t^2} <\infty,
	\end{equation}
	where $K>0$ is some constant independent of $n$. From \eqref{eq:cnBoundedCFC} and \eqref{eq:BoundSumDt} together with Theorem~\ref{thm:PqnBehaviour} it follows that
	$$P_{q_n}(\alpha) = \Theta\left(c_n\exp\Big( - 2 c_n\sum_{t=1}^{M_n-1}\frac{D_t(\alpha_n^-)}{t(t+1)} \Big)\right) = \Theta(1) .$$
	If $\alpha$ has unbounded c.f.c. then we have that $c_n = \Theta(1/a_n)$ and by Theorem~\ref{thm:DtFormula} it follows for $t\in\{1,\ldots,q_n(\alpha)-1\}$ with Ostrowski representation in base $\alpha_n^-$ of the form $t=\sum_{i=1}^{N(t)}v_iq_i(\alpha_n^-)$ that
	\begin{align*}
		D_t(\alpha_n^{-}) &= \frac{1}{2} \sum_{i=1}^{N(t)}\left(v_i\Lambda_{i}(\alpha_n^-)(v_iq_{i}(\alpha_n^-)+1)+(-1)^{i}v_i\right) +\sum_{1\leq i<j\leq N(t)}v_iv_jq_{i}(\alpha_n^-)\Lambda_{j}(\alpha_n^-) \\
		&=\frac{1}{2} \sum_{i=1}^{N(t)}(-1)^{i}v_i\left(1-v_ic_{i}(\alpha_n^-)-|\Lambda_{i}|\right) +\sum_{j=1}^{N(t)}v_j\Lambda_{j}(\alpha_n^-)\sum_{i=1}^{j-1}v_iq_{i}(\alpha_n^-).
	\end{align*}
	By the properties of the Ostrowski representation we get that $v_ic_i(\alpha_n^-)\leq1$, $v_i|\Lambda_i|\leq 1$  and $\sum_{i=1}^{j-1}v_iq_{i}(\alpha_n^-) < q_j(\alpha_n^-)$. Thus, by additionally using that  $N(t)=\calO(\log(t))$ we have 
	\begin{equation}\label{eq:DtAsymptotics}
		D_t(\alpha_n^-)= \frac{1}{2} \sum_{i=1}^{N(t)}(-1)^{i}v_i\left(1-v_ic_{i}(\alpha_n^-)\right)+ \calO(\log(t)).
	\end{equation}
	Inserting \eqref{eq:DtAsymptotics} in the assertion of Theorem~\ref{thm:PqnBehaviour} we obtain
	\begin{equation}
		P_{q_n}(\alpha)=\Theta\left(\frac{1}{a_n}\exp\left(-2\frac{1}{a_n}\sum_{t=1}^{M_n-1}\frac{D_t(\alpha_n^-)}{t(t+1)}\right)\right)=\Theta\left(\frac{1}{a_n}\exp\left(\frac{1}{a_n}Y_n(\alpha)\right)\right),
	\end{equation}
	where $Y_n=\sum_{t=1}^{M_n-1}1/(t(t+1))\sum_{i=1}^{N(t)}(-1)^{i-1}v_i\left(1-v_ic_{i}(\alpha_n^-)\right)$. This verifies also the second statement of Corollary~\ref{cor:PqnBehaviourTypeOfCFC}.
	\end{proof}


	Let us now shift our attention to the proof of Theorem~\ref{thm:ConvBoundedCFC}. We will show that the factors $A_n$, $B_n$ and $C_n$ will converge, given that $\alpha$ has bounded continued fraction coefficients and $\lim_{n\to \infty}\alpha_n^+$ and $\lim_{n\to \infty}\alpha_n^-$ exist.
	We split the proof of Theorem~\ref{thm:ConvBoundedCFC} into three parts, one part for each of the factors $A_n$, $B_n$ and $C_n$. Before starting with the convergence of $A_n$ we introduce some notation that will be helpful later on, especially for the convergence of $B_n$ and $C_n$.
We assume that the following limits exist and set
		\begin{equation}
			\alpha_{\infty}^+:= \lim_{n\to \infty}\alpha_n^+ \text{ and } \alpha_{\infty}^-:=
			\lim_{n\to\infty}\alpha_{n}^-.
		\end{equation}
	Further,
	\begin{align}
			f_n^+:= |\alpha_{n}^+-\alpha_{\infty}^+| \text{ and } f_n^-:= |\alpha_{n}^- -\alpha_{\infty}^-|.
		\end{align}
	By definition we get that $f_n^+,f_n^- \rightarrow 0^+$ for $n\to \infty$ and we also have 
		\begin{equation}\label{eq:alphaInfty}
			\alpha_{n}^+ = \alpha_{\infty}^+ + \calO(f^+_n) \text{ and } \alpha_{n}^- = \alpha_{\infty}^- + \calO(f^-_n).
			\end{equation}
	
	\begin{proof}[Proof of the convergence of $A_n$:]
		The desired convergence for $A_n$ follows immediately from the assumptions of Theorem~\ref{thm:ConvBoundedCFC} together with Lemma~\ref{lem:StructurePqn}. We have that
		\begin{equation}\label{eq:ConvAn}
			\lim_{n\to\infty} A_n = \lim_{n\to\infty}\frac{2\pi}{\alpha_n^++\alpha_n^-} \left(1+\calO(\Lambda_n^2)\right) = \frac{2\pi}{\alpha_{\infty}^+ + \alpha_{\infty}^-}.
		\end{equation}
		
		\end{proof}
		

		We need a little bit more effort to prove the convergence of $C_n$.
		
		\begin{proof}[Proof of the convergence of $C_n$:]
		Recall that by Lemma~\ref{lem:StructurePqn} we have that
		\begin{equation}\label{eq:RecallCn}
		C_n= \prod_{t=1}^{\kappa_n} \left( 1- \frac{1}{4\left( t/ c_{n}-\xi_{n t} \right)^2}\right)+\calO\left(\kappa_n^{-1}\right).
		\end{equation}
		As stated in Lemma~\ref{lem:StructurePqn} and also pointed out in Remark~\ref{rem:differentChoiceKappa} equation \eqref{eq:RecallCn} also holds for the choice 
		$$\kappa_n=\floor{\min\{1/(f^-_n)^{1/2}, q_n^{1/2}\}}.$$
		Observe that by this choice we have for $t\in\{1,\ldots,\kappa_n\}$ that $tf^-_n\leq \kappa_nf^-_n \leq \left(f^-_n\right)^{1/2}\rightarrow0$ for $n \to \infty$.	Now together with \eqref{eq:alphaInfty} it follows that for sufficiently large $n$ we obtain
		\begin{align*}
			4\left( t/ c_{n}-\xi_{n t} \right)^2& = 4\left(t(\alpha_n^+ + \alpha_n^-) - \{t\alpha_{n}^-\} +1/2\right)^2 \\
			&= 4\left(t(\alpha_\infty^++\alpha_{\infty}^-)-\{\alpha_{\infty}^-t\}+1/2\right)^2 + t^2\calO(f^+_n + f^-_n)\\
			&= u_{\infty}^2(t)+t^2\calO(f^+_n + f^-_n),
		\end{align*}
		where we have set 
		\begin{equation}
		u_{\infty}(t):=2\left(t(\alpha_\infty^++\alpha_{\infty}^-)-\{t\alpha_{\infty}^-\}+1/2\right).
		\end{equation}
		Further, it follows that
\begin{align}\label{eq:CnConvergenceHelp}
	\prod_{t=1}^{\kappa_n} \left( 1- \frac{1}{4\left( t/ c_{n}-\xi_{n t} \right)^2}\right) 
	&= \prod_{t=1}^{\kappa_n}\left(1-\frac{1}{u_{\infty}(t)^2}\frac{1}{1+t^2/u_{\infty}(t)^2\calO(f^+_n + f^-_n)}\right)\nonumber\\
	&=\prod_{t=1}^{\kappa_n}\left(1-\frac{1}{u_{\infty}(t)^2}-\frac{\calO(f^+_n + f^-_n)}{u_{\infty}(t)^2}\right)\nonumber\\
	&=\prod_{t=1}^{\kappa_n}\left(1-\frac{1}{u_{\infty}(t)^2}\right)\prod_{t=1}^{\kappa_n}\left(1-\frac{\calO(f^+_n + f^-_n)}{u_{\infty}(t)^2-1}\right).
\end{align}
Let us analyze the convergence of the two products in \eqref{eq:CnConvergenceHelp} separately. First of all
it is clear that $\sum_{t=1}^{\infty}1/u_{\infty}(t)^2<\sum_{t=1}^{\infty}1/(u_{\infty}(t)^2-1)<\infty$ by comparison with $\sum_{t=1}^{\infty}1/t^2$. The convergence of the product $\prod_{t=1}^{\kappa_n}\left(1-1/u_{\infty}(t)^2\right)$ now follows by the fact that this is a decreasing sequence in $n$ (note: $u_\infty(t)>2a_n-1\geq1$) and the product is bounded from below by 
\begin{equation}\label{eq:ConvProd_ut}
	\prod_{t=1}^{\kappa_n}\left(1-\frac{1}{u_{\infty}(t)^2}\right) \geq 1- \sum_{t=1}^{\infty}\frac{1}{u_{\infty}(t)^2}\geq 1- \frac{1}{u_{\infty}(1)^2}- \sum_{t=2}^{\infty}\frac{1}{(2t-1)^2}>\frac{3}{5}.
\end{equation}
Note that we have used Lemma~\ref{lem:ProductSumLemma} and the fact that $\alpha_\infty^+ > a_n \geq 1$. 
For the second product observe that $\calO(f^+_n + f^-_n)\sum_{t=1}^{\infty}1/(u_{\infty}(t)^2-1) < 1$ for sufficiently large $n$ and we get again by Lemma~\ref{lem:ProductSumLemma} that
\begin{align}\label{eq:ConvFirstProd}
	1-S\leq \prod_{t=1}^{\kappa_n}\left(1-\frac{\calO(f^+_n + f^-_n)}{u_{\infty}(t)^2-1}\right)\leq\left(1-S\right)^{-1},
\end{align}
where $S:=\calO(f^+_n+f^-_n)\sum_{t=1}^{\infty}1/(u_{\infty}(t)^2-1) \rightarrow 0$ for $n \to \infty$. Summarizing the convergence analysis of the products in \eqref{eq:CnConvergenceHelp} it follows that
\begin{equation}\label{eq:ConvCn}
	\lim_{n\to\infty}C_n= \prod_{t=1}^{\infty}\left(1-\frac{1}{u_{\infty}^2(t)}\right).
\end{equation}
Observe that $L_1:=\lim_{n\to\infty}C_n >0$ by \eqref{eq:ConvProd_ut}. This finishes the proof of the convergence of $C_n$.
\end{proof}


Finally we have to show the convergence of the factor $B_n$. Recall that by Lemma~\ref{lem:StructurePqn} we have that
\begin{align}\label{eq:RecallBnLemma}
	\log(B_n)=& -2 \pi^2\frac{c_n}{q^2_n}\sum_{t=1}^{M_n-1}\frac{D_t(\alpha_n^-)}{\sin(\pi t/q_n)\sin(\pi(t+1)/q_n)} \nonumber\\
	&-2\sum_{t=1}^{\tau_n}\sum_{j=2}^{\tau_n} \frac{1}{j}\left(\frac{c_{n}\xi_{nt}}{t}\right)^j	+ \calO\left(\tau_n^{-1}\right).
\end{align}
As stated in Lemma~\ref{lem:StructurePqn} and also indicated in Remark~\ref{rem:ChoiceOftaun} equation \eqref{eq:RecallBnLemma} also holds for 
\begin{equation}\label{eq:ChoiceTaun}
\tau_n:=\floor{\min\{\log_2\left((f^+_n+f^-_n)^{-1/2}\right), q_n^{1/2} \}}.
\end{equation}
A consequence of this choice is that for $1\leq t,j \leq \tau_n$ it follows that for some fixed $k\in\NN$
\begin{align}
	&t^k f_n^- \leq -\frac{1}{2}\left(\log_2(f^-_n)\right)^kf_n^- \rightarrow 0, \text{ for } n\to \infty; \label{eq:ChoiceTauConsequence1}\\
	& 2^jf_n^- \leq (f_n^+ + f_n^-)^{-1/2}f_n^- \leq \sqrt{f_n^-} \rightarrow 0, \text{ for } n \to \infty.\label{eq:ChoiceTauConsequence2}
\end{align}
These facts will be useful several times in the convergence analysis of $B_n$.
\begin{proof}[Proof of convergence of $B_n$]
We see from \eqref{eq:RecallBnLemma} that it is sufficient for the convergence of $B_n$ to prove convergence of the following two terms:
\begin{equation}\label{eq:twoTerms}
	\pi^2\frac{c_n}{q^2_n}\sum_{t=1}^{M_n-1}\frac{D_t(\alpha_n^-)}{\sin(\pi t/q_n)\sin(\pi(t+1)/q_n)} \text{ and } \sum_{t=1}^{\tau_n}\sum_{j=2}^{\tau_n} \frac{1}{j}\left(\frac{c_{n}\xi_{nt}}{t}\right)^j.
\end{equation}
We start with the double sum term in \eqref{eq:twoTerms}. Let us first analyze the summands more detailed. It follows by \eqref{eq:cn} and \eqref{eq:xint} for sufficiently large $n$ that
\begin{align}\label{eq:SummandsDoubleSum}
	\frac{c_n\xi_{nt}}{t} = \frac{\{t\alpha_n^-\}-1/2}{t(\alpha_n^++\alpha_n^-)} = \frac{\{t\alpha_{\infty}^-\} -1/2}{t(\alpha_{\infty}^++\alpha_{\infty}^-)}\left(1+\calO(f^+_n + f^-_n)\right) + \calO(f_n^{-}).
\end{align}
In order to simplify notation let us set $v_t:=(\{t\alpha_{\infty}^-\} -1/2)/(t(\alpha_{\infty}^++\alpha_{\infty}^-))$ and recall that $v_t<1/(2t)$ and $f_n^+,f_n^- \rightarrow 0$ for $n\to\infty$.
Since we have for $x,y\leq1$
$$(x+y)^j \leq x^j +2^jy$$
it follows for sufficiently large $n$ together with \eqref{eq:SummandsDoubleSum} that
\begin{align}\label{eq:SummandPower}
	\left(\frac{c_n\xi_{nt}}{t}\right)^j	&= v_t^j\left(1+\calO(f^+_n + f^-_n)\right)^j  + \calO(2^jf_n^-)\nonumber\\
	&= v_t^j\left(1 + \calO(2^j(f^+_n + f_n^-))\right) + \calO(\sqrt{f_n^-})\nonumber\\
	&= v_t^j + \calO(\sqrt{f_n^-}).
\end{align}
Note that we used \eqref{eq:ChoiceTauConsequence2} in the last step. It follows by \eqref{eq:SummandPower} together with the choice of $\tau_n$ that
\begin{align*}
	\sum_{t=1}^{\tau_n}\sum_{j=2}^{\tau_n} \frac{1}{j}\left(\frac{c_n\xi_{nt}}{t}\right)^j 
	&=\sum_{t=1}^{\tau_n}\sum_{j=2}^{\tau_n} \frac{1}{j}v_t^j + \calO\left(\tau_n^2\sqrt{f_n^-}\right) =\sum_{t=1}^{\tau_n}\sum_{j=2}^{\tau_n} \frac{1}{j}v_t^j+\calO\left(\frac{\sqrt{f^-_n}}{\log_2((f^-_n)^{-1/2})}\right).
\end{align*}
Furthermore we see that 
$$ \sum_{t=1}^{\tau_n}\sum_{j=2}^{\tau_n}\left| \frac{1}{j}v_t^j\right| \leq\frac{1}{2}\sum_{t=1}^{\infty}\sum_{j=2}^{\infty} \frac{1}{(2t)^j} = \frac{1}{2}\sum_{t=1}^{\infty}\frac{1}{2t(2t-1)} <\infty.$$
Thus $\sum_{t=1}^{\tau_n}\sum_{j=2}^{\tau_n} \frac{1}{j}v_t^j$ is an absolutely convergent series and therefore there exists a constant $L_2$ with 
\begin{equation}\label{eq:ConvSecondTerm}
\lim_{n\to\infty}\sum_{t=1}^{\tau_n}\sum_{j=2}^{\tau_n} \frac{1}{j}\left(\frac{\{t\alpha_n^-\}-1/2}{t(\alpha_n^++\alpha_n^-)}\right)^j=\lim_{n\to\infty}\sum_{t=1}^{\tau_n}\sum_{j=2}^{\tau_n} \frac{1}{j}v_t^j=L_2.
\end{equation}
Let us shift our attention to the second term in \eqref{eq:twoTerms}. Making use of $\sin(x)=x(1+\calO(x^2))$ we obtain for sufficiently large $n$ that
\begin{align}
	\pi^2\frac{c_n}{q^2_n}&\sum_{t=1}^{M_n-1}\frac{D_t(\alpha_n^-)}{\sin(\pi t/q_n)\sin(\pi(t+1)/q_n)} = c_n\sum_{t=1}^{M_n-1}\frac{D_t(\alpha_n^-)}{t(t+1)}\left(1+\calO(t^2q_n^{-2})\right)\nonumber\\
	&=c_n\sum_{t=1}^{\tau_n}\frac{D_t(\alpha_n^-)}{t(t+1)} + c_n\sum_{t=\tau_n+1}^{M_n-1}\frac{D_t(\alpha_n^-)}{t(t+1)} + \calO\left(q_n^{-2}\sum_{t=1}^{M_n}D_t(\alpha_n^-)\right) \nonumber\\
	&= c_n\sum_{t=1}^{\tau_n}\frac{D_t(\alpha_n^-)}{t(t+1)} + \calO\left(\frac{\log(\tau_n)}{\tau_n}+\frac{\log(q_n!)}{q_n^2}\right),
\end{align}
where we have used in the last step that $D_t(\alpha_n^-)=\calO(\log(t))$ for each $t\in\{1,\ldots,M_n-1\}$ (see Section~\ref{sec:Prelim}, equation \eqref{eq:Dtbcfc}). Additionally by \eqref{eq:ChoiceTauConsequence1} we have for sufficiently large $n$ that
\begin{align}\label{eq:SumDt}
	\sum_{t=1}^{\tau_n}\frac{D_t(\alpha_n^-)}{t(t+1)}&=\sum_{t=1}^{\tau_n}\frac{1}{t(t+1)}\sum_{s=1}^t\left(\{s\alpha_n^-\}-\frac{1}{2}\right)\nonumber\\
	&= \sum_{t=1}^{\tau_n}\frac{1}{t(t+1)}\sum_{s=1}^t\left( \{s\alpha_{\infty}^-\}-\frac{1}{2} + s\calO\left(f^-_n\right)\right)\nonumber\\
	&=\sum_{t=1}^{\tau_n}\left[\frac{1}{t(t+1)}\sum_{s=1}^t\left(\{s\alpha_{\infty}^-\}-\frac{1}{2}\right)\right] + \calO\left(\tau_nf^-_n\right).
\end{align}
Observe that by the choice of $\tau_n$ in \eqref{eq:ChoiceTaun} we clearly have that $\tau_nf_n^- \rightarrow 0$ for $n\to\infty$. Additionally note that by \eqref{eq:Dtbcfc} it follows that for some constant $K>0$ independent of $n$ we have
$$\left|\sum_{t=1}^{\tau_n}\frac{1}{t(t+1)}\sum_{s=1}^t\left(\{s\alpha_{\infty}^-\} - \frac{1}{2}\right)\right| \leq K\sum_{t=1}^{\infty}\frac{\log(t)}{t^2}<\infty.$$
Therefore, $\sum_{t=1}^{\tau_n}1/(t(t+1))\sum_{s=1}^t\left(\{s\alpha_{\infty}^-\} - 1/2\right)$ is an absolutely convergent series. Together with \eqref{eq:SumDt} this means that there exists a real number $L_3$ such that
\begin{equation}\label{eq:ConvFirstTerm}
	\lim_{n\to\infty}\sum_{t=1}^{\tau_n}\frac{D_t(\alpha_n^-)}{t(t+1)} = L_3.
\end{equation}
Finally, by combining \eqref{eq:RecallBnLemma} with \eqref{eq:ConvSecondTerm} and \eqref{eq:ConvFirstTerm} we get that
\begin{align}\label{eq:ConvBn}
	\lim_{n\to\infty} B_n &= \lim_{n\to\infty}\exp\left( -2c_{n}\sum_{t=1}^{\tau_n}\frac{D_t(\alpha_n^-)}{t(t+1)} -2\sum_{t=1}^{\tau_n}\sum_{j=2}^{\tau_n} \frac{1}{j}\left(\frac{c_{n}\xi_{nt}}{t}\right)^j
		 \right) \nonumber\\
		&= \exp\left(-\frac{2L_3}{\alpha_{\infty}^++\alpha_{\infty}^-}-2L_2\right)>0.
\end{align}
This establishes the convergence of $B_n$.
\end{proof}
Finally, the statement of Theorem~\ref{thm:ConvBoundedCFC} is now a direct consequence of the convergence of the factors $A_n$, $B_n$ and $C_n$. By \eqref{eq:ConvAn}, \eqref{eq:ConvCn} and \eqref{eq:ConvBn} it follows that
\begin{align*}
	\lim_{n\to\infty}P_{q_n}(\alpha) &= \lim_{n\to\infty}A_n \lim_{n\to\infty}B_n\lim_{n\to\infty}C_n \nonumber \\
	&= \frac{2\pi}{\alpha_{\infty}^+ + \alpha_{\infty}^-} \exp\left(-\frac{2L_2}{\alpha_{\infty}^++\alpha_{\infty}^-}-2L_3\right)L_1>0.
\end{align*}

Last but not least we have to prove Corollary~\ref{cor:behaviourEuler}. 
\begin{proof}[Proof fo Corollary~\ref{cor:behaviourEuler}]
Recall that $\alpha=\mathrm{e}=[2;\overline{1,2n,1}]_{n=1}^{\infty}$. Let $n_i^{(k)}=n_i=3i+k$ for $k\in\{0,1,2\}$ and recall that by \eqref{eq:defAlphPlusAlphMinus} we have
$$\alpha^-_{n_i}=[0;a_{n_i-1},a_{n_i-2},\ldots,a_1]$$
and $a_{3i}=1$ for $i\geq1$, $a_{3i+1}=1$ and $a_{3i+2}=2(i+1)$ for $i\geq 0$.
Now consider the Ostrowski representation in base $\alpha^-_{n_i}$ for $t \in \{1,\ldots,q_{n}(\alpha)-1\}=\{1,\ldots,q_n(\alpha_{n_i})-1\}$ i.e. $t=\sum_{j=1}^{N(t)}v_j(t)q_j(\alpha_{n_i}^-)$. (As already mentioned in Section~\ref{sec:Prelim} this representation is well defined.)	By Theorem~\ref{thm:PqnBehaviour} we get that 
	\begin{equation}\label{eq:structurePqni}
		P_{q_{n_i}}(\mathrm{e}) =\Theta\left( c_{n_i}\exp\left(c_{n_i}\sum_{t=1}^{M_{n_i}-1}\frac{y_{n_i}(t)}{t(t+1)} \right)\right),
		\end{equation}
		where we have set
		\begin{align*}
		y_{n_i}(t):= \sum_{j=1}^{N(t)}(-1)^{j-1}v_j(t)(1-v_j(t)c_j(\alpha_{n_i}^-)).
		\end{align*}
Let $k=0$ i.e. $n_i=3i$, then $\alpha_{n_i}^-=[0;2i,1,1,2(i-1),1,1,\ldots,1]$. For sufficiently large $i$ we can split up the sum over $t$ appearing in \eqref{eq:structurePqni} in the following way
\begin{align*}
	\sum_{t=1}^{M_{n_i}-1}\frac{y_{n_i}(t)}{t(t+1)} &= \sum_{t=1}^{q_2(\alpha_{n_i}^-)-1}\frac{y_{n_i}(t)}{t(t+1)}+\sum_{t=q_2(\alpha_{n_i}^-)}^{q_5(\alpha_{n_i}^-)-1}\frac{y_{n_i}(t)}{t(t+1)}+\sum_{t=q_5(\alpha_{n_i}^-)}^{M_{n_i}-1}\frac{y_{n_i}(t)}{t(t+1)}\\
	&=:S_1 + S_2 +S_3.
\end{align*}
We compute that 
\begin{align*}
	q_2(\alpha_{n_i}^-)=2i; ~ q_3(\alpha_{n_i}^-) = 2i+1;~
	q_4(\alpha_{n_i}^-)=4i+1;~ q_5(\alpha_{n_i}^-)=8i^2-4i-1.
\end{align*}
	Observe further that the Ostrowski representation of $t$ in base $\alpha_{n_i}$ collapses to $t=v_1(t)$ for $t\in\{1,\ldots,q_2(\alpha_{n_i}^-)-1\}$.  Using these observations it follows that
	\begin{align}\label{eq:boundS1}
		S_1 &= \sum_{t=1}^{q_2(\alpha_{n_i}^-)-1}\frac{1}{t(t+1)}\sum_{j=1}^{N(t)}(-1)^{j-1}v_j(t)(1-v_j(t)c_j(\alpha_{n_i}^-))\nonumber\\
		&=\sum_{t=1}^{2i-1}\frac{1}{(t+1)}-c_1(\alpha_{n_i}^-)\sum_{t=1}^{2i-1}\frac{t}{(t+1)}\nonumber\\
		&\geq \log(2i)-1 - \frac{2i-1}{a_{n_i-1}+2} \geq \log(2i)-2, 
	\end{align}
	where we applied $c_1(\alpha_{n_i})\leq 1/a_{n_i-1}=1/(2i).$ For the second sum we obtain
	\begin{align}\label{eq:boundS2}
		S_2 &= \sum_{t=2i}^{8i^2-4i-2}\frac{1}{t(t+1)}\sum_{j=1}^{4}(-1)^{j-1}v_j(t)(1-v_j(t)c_j(\alpha_{n_i}^-)) \nonumber\\
		&\geq -\sum_{t=2i}^{\infty}\frac{1}{t^2}\sum_{j=1}^{4}a_{n_i-j} = -4i\sum_{t=2i}^{\infty}\frac{1}{t^2}  \geq - 4,
	\end{align}
	where we have applied the bounds $1-v_j(t)c_j(\alpha_{n_i})<1$ and $v_j(t)\leq a_{n_i-j}$ which are valid for all $t$. We can apply similar arguments for the third sum $S_3$ and get
	\begin{align}\label{eq:boundS3}
		S_3 &= \sum_{t=8i^2-4i-1}^{M_{n_i}-1}\frac{1}{t(t+1)}\sum_{j=1}^{N(t)}(-1)^{j-1}v_j(t)(1-v_j(t)c_j(\alpha_{n_i}^-))\nonumber\\
		&\geq - \sum_{t=7i^2+1}^{\infty}\frac{1}{t^2}\sum_{j=1}^{n_i-1}a_{n_i-j} = -10i^2\sum_{t=7i^2+1}^{\infty}\frac{1}{t^2} \geq -\frac{10}{7},
	\end{align}
	where we make use of the rough bound $\sum_{j=1}^{n_i-1}a_j \leq 10i^2$ which holds for sufficiently large $i$. Combining \eqref{eq:structurePqni}, \eqref{eq:boundS1}, \eqref{eq:boundS2} and \eqref{eq:boundS3} with the fact that 
	$$ \frac{1}{3} \leq \frac{1}{a_{3i}+2} < c_{3i}<\frac{1}{a_{3i}} \leq 1$$
	we get that
\begin{equation*}
	P_{q_{3i}}(\mathrm{e}) \geq C_1 \sqrt[3]{i}
\end{equation*}
for some constant $C_1>0$ independent of $i$. This proves that $P_{q_{3i}}(\mathrm{e})\rightarrow \infty$ for $i\to\infty$ and finishes the case $k=0$.\\
If we consider the case $k=1$ (i.e. $n_i=3i+1$ and $\alpha_{n_i}=[0;1,2i,1,1,2(i-1),\ldots,1]$) the crucial observation in this case is that $q_2(\alpha_{n_i}^-)=1$ and $a_{n_i-2}=2i$. Thus for all $t\in\{1,\ldots,q_3(\alpha_{n_i})-1\}$ we have that the Ostrowski expansion of $t$ in base $\alpha_{n_i}^-$ is $t=v_2(t)$. Additionally $v_1(t)=0$ for all $t$. Performing a similar splitting of the sum in \eqref{eq:structurePqni} as in the case $k=0$ we get that
\begin{align*}
	\sum_{t=1}^{M_{n_i}-1}\frac{y_{n_i}(t)}{t(t+1)} &= \sum_{t=1}^{q_3(\alpha_{n_i}^-)-1}\frac{y_{n_i}(t)}{t(t+1)}+\sum_{t=q_3(\alpha_{n_i}^-)}^{q_6(\alpha_{n_i}^-)-1}\frac{y_{n_i}(t)}{t(t+1)}+\sum_{t=q_6(\alpha_{n_i}^-)}^{M_{n_i}-1}\frac{y_{n_i}(t)}{t(t+1)}\\
	&=:\overline{S}_1 + \overline{S}_2 + \overline{S}_3. 
\end{align*}
Following the same arguments as in the case $k=0$ it follows that for sufficiently large $i$ we get
\begin{align*}
	&\overline{S}_1 = -\sum_{t=1}^{2i}\frac{1}{t+1} + c_2(\alpha_{n_i}^-)\sum_{t=1}^{2i}\frac{t}{t+1} \leq -\log(2i+1) +2;\\
	&\overline{S}_2 \leq 4i \sum_{t=2i+1}^{\infty}\frac{1}{t^2} \leq 2 ~\text{ and }~
	\overline{S}_3  \leq 10i^2 \sum_{t=7i^2+1}^{\infty}\frac{1}{t^2} \leq \frac{10}{7}.
\end{align*}
Making use of $1/3\leq c_{3i+1} \leq 1$ we get by \eqref{eq:structurePqni} that
\begin{equation}
	P_{q_{3i+1}}(\mathrm{e}) \leq C_2 \frac{1}{\sqrt[3]{2i+1}} 
\end{equation}
for some constant $C_2>0$ independent of $i$. It follows that $P_{q_{3i+1}}(\mathrm{e})\rightarrow 0$ for $i \to \infty$.\\
For the last case $k=2$ (i.e. $n_i=3i+2$ and $\alpha_{n_i}=[0;1,1,2i,1,1,2(i-1),\ldots,1]$) we observe that $q_2(\alpha_{n_i}^-)=1$, $q_3(\alpha_{n_i}^-)=2$ and $a_{n_i-3}=2i$. In this case the digits of the Ostrowski expansion in base $\alpha_{n_i}^-$ satisfy $v_1(t)=0$ for all $t$, $v_2(t)=1$ if $t$ is odd and $v_2(t)=0$ if $t$ is even. Further, the following property holds for $t\in\{1,\ldots,q_4(\alpha_{n_i})-1\}$.
$$t= 
\begin{cases}
	2v_3(t) & t \text{ even,}\\
	1+2v_3(t) & t \text{ odd.}
\end{cases}$$
Splitting the sum in \eqref{eq:structurePqni} gives
\begin{align*}
	\sum_{t=1}^{M_{n_i}-1}\frac{y_{n_i}(t)}{t(t+1)} &=\sum_{t=1}^{q_4(\alpha_{n_i}^-)-1}\frac{y_{n_i}(t)}{t(t+1)}+\sum_{t=q_4(\alpha_{n_i}^-)}^{q_7(\alpha_{n_i}^-)-1}\frac{y_{n_i}(t)}{t(t+1)}+\sum_{t=q_7(\alpha_{n_i}^-)}^{M_{n_i}-1}\frac{y_{n_i}(t)}{t(t+1)}\\
	&=:\tilde{S}_1 + \tilde{S}_2 +\tilde{S}_3.
\end{align*}
Following the same steps as in the cases $k=0$ and $k=1$ we get that
\begin{align*}
	\tilde{S}_1 &\leq \frac{1}{2} \sum_{t=2}^{4i}\frac{1}{t} 		\leq \frac{1}{2}\log(4i+1);\\
	\tilde{S}_2 &\leq (4i+1) \sum_{t=4i+1}^{\infty}\frac{1}{t^2} \leq \frac{5}{4} \text{ and }	\tilde{S}_3 \leq 10i^2 \sum_{t=15i^2+1}^{\infty}\frac{1}{t^2} \leq \frac{1}{3}.
\end{align*}
Additionally we have $1/(a_{3i+2}+2)\leq c_{3i+2}(\alpha) \leq 1/a_{3i+2}$ and $a_{3i+2}=2(i+1)$. Hence, by \eqref{eq:structurePqni} we get
\begin{equation*}
	P_{q_{3i+2}}(\mathrm{e}) \leq C_3 \frac{1}{i+1}\exp\left(\frac{\log(4i+1)}{4(i+1)}+1\right)
\end{equation*}
for some constant $C_3>0$ independent of $i$, meaning that $P_{q_{3i+2}}(\mathrm{e}) \rightarrow 0$ for $i\to\infty$. This finishes the proof of Corollary~\ref{cor:behaviourEuler}.
\end{proof}

\noindent
\textbf{Acknowledgments}\\
The author would like to thank Sigrid Grepstad for helpful advices and fruitful discussions about several topics related to this article.

	\bibliography{mybibl}
	\bibliographystyle{plain}

\end{document}